\def\BibTeX{{\rm B\kern-.05em{\sc i\kern-.025em b}\kern-.08em
		T\kern-.1667em\lower.7ex\hbox{E}\kern-.125emX}}
\newtheorem{theorem}{Theorem}
\newtheorem{proposition}{Proposition}
\newtheorem{lemma}{Lemma}
\newtheorem{definition}{Definition}
\theoremstyle{definition}
\theoremstyle{remark}
\newtheorem{remark}{Remark}
\newtheorem{assumption}{Assumption}
\pgfplotsset{compat=newest}
\def\axisdefaultheight{110pt}
\DeclareMathOperator*{\argmin}{arg\,min}
\begin{document}
	
\title{Distributed Model Predictive Control for Piecewise Affine Systems Based on Switching ADMM}
\newcommand{\mytitle}{Distributed MPC for PWA Systems Based on Switching ADMM}
\hypersetup{hidelinks,pdfauthor=Samuel Mallick, pdfcreator=Samuel Mallick, pdftitle=\mytitle}

\author{Samuel Mallick, Azita Dabiri, and Bart De Schutter, \IEEEmembership{Fellow, IEEE}
	\thanks{This paper is part of a project that has received funding from the European Research Council (ERC) under the European Union’s Horizon
		2020 research and innovation programme (Grant agreement No. 101018826
		- CLariNet).}
	\thanks{All authors are affiliated with Delft Center for Systems and Control, Delft
		University of Technology, Delft, The Netherlands (e-mail: \{s.h.mallick,a.dabiri,b.deschutter\}@tudelft.nl).}}



\maketitle

\begin{abstract}
	This paper presents a novel approach for distributed model predictive control (MPC) for piecewise affine (PWA) systems.
	Existing approaches rely on solving mixed-integer optimization problems, requiring significant computation power or time.
	We propose a distributed MPC scheme that requires solving only convex optimization problems.
	The key contribution is a novel method, based on the alternating direction method of multipliers, for solving the non-convex optimal control problem that arises due to the PWA dynamics.
	We present a distributed MPC scheme, leveraging this method, that explicitly accounts for the coupling between subsystems by reaching agreement on the values of coupled states.
	Stability and recursive feasibility are shown under additional assumptions on the underlying system.
	Two numerical examples are provided, in which the proposed controller is shown to significantly improve the CPU time and closed-loop performance over existing state-of-the-art approaches.
\end{abstract}
\begin{IEEEkeywords}
	Distributed model predictive control, networked systems, piecewise affine systems, alternating direction method of multipliers (ADMM)
\end{IEEEkeywords}
\section{Introduction}

Distributed control of large-scale hybrid systems is an important challenge in the field of systems and control \cite{lamnabhi-lagarrigueSystemsControlFuture2017}.
Hybrid systems are dynamical systems exhibiting both continuous and discrete dynamics.
They are powerful models for many real-world large-scale systems such as transportation networks \cite{luanDecompositionDistributedOptimization2020}, power networks \cite{mendesPracticalApproachHybrid2017}, and water networks \cite{vanekerenTimeInstantOptimizationHybrid2013}.
In particular, piecewise affine (PWA) models are a popular class of hybrid system model due their suitability for analysis \cite{sontagNonlinearRegulationPiecewise1981}, proven equivalence to other hybrid model classes \cite{heemelsEquivalenceHybridDynamical2001a}, and ability to approximate non-linear functions to arbitrary accuracy.

Distributed model predictive control (MPC) \cite{maestreDistributedModelPredictive2014} is a promising candidate for control of large-scale PWA systems.
MPC is an optimization-based paradigm where control inputs are generated online by minimizing a sum of stage costs over a prediction horizon, subject to the system dynamics, and physical and operational constraints \cite{mayneConstrainedModelPredictive2000}.
Distributed MPC applies this idea to spatially separated or decomposed systems, where subsystem controllers solve, sometimes iteratively, local optimization problems, and where the coupling between connected subsystems is accounted for by communication \cite{maestreDistributedModelPredictive2014}.
For linear systems, this approach can reconstruct the solution to a globally defined convex MPC controller through the use of iterative distributed optimization algorithms, e.g., the alternating direction method of multipliers (ADMM) \cite{boydDistributedOptimizationStatistical2010}.
In this case, as the globally defined controller considers the global system, the effect of coupling between subsystems is accounted for and the stability and performance of the globally defined MPC controller carries through to the distributed implementation.
For PWA systems, however, the PWA dynamics result in a non-convex global optimization problem for which distributed optimization algorithms, such as ADMM, are not guaranteed to recover the global optimum, nor even to find a feasible solution.
As such, the effects of coupling between subsystems may cause instability or negatively affect the performance of the resulting distributed controller.
Furthermore, the local optimization problems are also non-convex, and the computational burden of solving them iteratively can become prohibitive, particularly as their computational complexity grows exponentially with the size of the prediction horizon \cite{bemporadControlSystemsIntegrating1999}.
Therefore, two key challenges for distributed MPC for PWA systems are: how to reduce the online computational burden, and how to account for the effects of coupling between subsystems.

The few existing approaches for distributed MPC for PWA systems rely on converting the PWA dynamics into mixed logical dynamical (MLD) form, converting the MPC optimization problems into mixed-integer quadratic problems (MIQPs) \cite{bemporadControlSystemsIntegrating1999}.
Distributed control is then conducted by the subsystems solving local MIQPs, iteratively in parallel \cite{grossDistributedPredictiveControl2013}, or in a sequential order \cite{kuwataDistributedRobustReceding2007}, with communication of the local solutions between subsystems handling the coupling.
However, these approaches suffer from the computational complexity of solving multiple MIQPs.
To reduce the online computational burden, \cite{mendesPracticalApproachHybrid2017} proposes to select values for the integers in the MIQPs with heuristics, using convex distributed MPC techniques to then solve for the values of the continuous variables.
However, the heuristic choice of integer solutions is in general highly suboptimal.
Perhaps the most theoretically complete approach was proposed recently in \cite{maDistributedMPCBased2023}, where distributed MPC for PWA systems is presented using robustly controllable sets.
The computational complexity of the local MIQPs is reduced by considering a prediction horizon of length one, and convergence to a terminal set is proven.
However, the coupling between subsystems is handled using robustness techniques, where subsystems consider coupling effects as disturbances, rather than agreeing on the values of the shared states.
This adds conservativeness, reducing performance and limiting the approach to systems with weak coupling.

In light of these challenges, this paper presents a novel approach for distributed MPC for PWA systems that systematically handles the coupling between subsystems and relies only on convex optimization.
The key contribution, enabling the approach, is a novel distributed method for solving the global non-convex optimization problem arising in the globally formulated MPC controller.
This method is based on ADMM, and leverages the underlying PWA dynamics of the system to solve the non-convex optimization problem distributively using only convex optimization.
Furthermore, the coupling between subsystems is handled explicitly, with the distributed solutions provably converging to points where the values of all shared states are agreed upon.
As such, the proposed method can improve upon the performance of existing controllers that heuristically handle coupling between subsystems, and presents a significantly lower computational burden than approaches based on mixed-integer optimization.

This paper is organized as follows.
Section \ref{sec:pre} gives the preliminaries, formalizing the systems considered and giving background on centralized and distributed MPC.
Section \ref{sec:Sw} presents the switching ADMM procedure and the distributed MPC controller, with the stabilizing properties proved in Section \ref{sec:stab_feas}.
In Section \ref{sec:ex} two numerical examples demonstrate the effectiveness of the approach, and, finally, Section \ref{sec:conclusion} concludes the paper.

\section{Preliminaries}
\label{sec:pre}
\subsection{Notation}
Define the following sets, all subsets of $\mathbb{N}$, as $\mathcal{M} = \{1,\dots,M\}$, $\mathcal{L}_i = \{1, \dots, L_i\}$, $\mathcal{K} = \{0,\dots,N-1\}$, and $\mathcal{K}' = \{0,\dots,N\}$.
We use the counter $t$ to represent closed-loop time steps and $k$ for time steps within an MPC prediction horizon.
A vector that stacks the vectors $x_i$, $i \in \mathcal{M}$, in one column vector is denoted $\text{col}_{i\in \mathcal{M}}(x_i)$.
Define the closure of set $\mathcal{P}$ as $\overline{\mathcal{P}}$.
For brevity we write $(A)^\top B (A)$ as $(\star)^\top B (A)$.

\subsection{Distributed Piecewise Affine Systems}
We consider a global system composed of $M$ subsystems where each subsystem $i \in \mathcal{M}$ has a state $x_i \in \mathbb{R}^{n_i}$ and an input $u_i \in \mathbb{R}^{m_i}$. 
Both states and inputs are subject to convex constraints
\begin{equation}
	x_i \in \mathcal{X}_i, \quad u_i \in \mathcal{U}_i, \: \forall i \in \mathcal{M}.
\end{equation}
The graph $\mathcal{G} = (\mathcal{M}, \mathcal{E})$ defines a coupling topology where the edges $\mathcal{E}$ are ordered pairs $(i, j)$ indicating that subsystem $i$ effects subsystem $j$ through the dynamics, stage cost, constraints, or some combination thereof. 
Define the neighborhood of subsystem $i$ as $\mathcal{N}_i = \{j \in {M} | (j, i) \in \mathcal{E}, i \neq j\}$.
A subsystem is not in its own neighborhood, i.e., $(i,i) \notin \mathcal{E}$.
It is assumed that subsystem $i$ and subsystem $j$ can communicate in a bidirectional manner if $i \in \mathcal{N}_j$ or $j \in \mathcal{N}_i$.

The subsystems are governed by discrete-time piecewise affine (PWA) dynamics\footnote{\begin{color}{black}As only discrete-time systems are considered, hybrid phenomena such as arbitrarily fast switching and Zeno behavior are not relevant \cite{bemporad2003modeling}.\end{color}}, with the state update equations $f_i$ affine on a finite number of convex polytopes, $\{P_i^{(l)}\}_{l \in \mathcal{L}_i}$, referred to as regions:
\begin{equation}
	\begin{aligned}
		\label{eq:dynamics}
		x_i^+ &= f_i\big(x_i, u_i, \{x_j\}_{j \in \mathcal{N}_i}\big) \\
		 &= A_{i}^{(l)} x_i + B_i^{(l)} u_i + c_i^{(l)} \\
		 &\quad + \sum_{j \in \mathcal{N}_i} A_{ij}^{(l)} x_j, \: x_i \in P_i^{(l)}, \forall l \in \mathcal{L}_i.
	\end{aligned}
\end{equation}
\begin{color}{black}The regions have non-overlapping interiors, i.e., $\text{int}(P_i^{(l)}) \cap \text{int}(P_i^{(l^\prime)}) = \emptyset$ for $l,l' \in \mathcal{L}_i, l \neq l'$, and form a partition of the state space, i.e., $\bigcup_{l \in \mathcal{L}_i} P_i^{(l)} = \mathcal{X}_i$.\end{color}
Define the global state and input, the aggregation of states and inputs for all subsystems, as $x = \text{col}_{i \in \mathcal{M}}(x_i)$ and $u = \text{col}_{i \in \mathcal{M}}(u_i)$.
Furthermore, define variables that gather states and inputs over the prediction horizon: $\textbf{x}_i = \big(x_i^\top(0),...,x_i^\top(N)\big)^\top$, $\textbf{u}_i = \big(u_i^\top(0),...,u_i^\top(N-1)\big)^\top$, $\textbf{x} = \big(x^\top(0),...,x^\top(N)\big)^\top$, $\textbf{u} = \big(u^\top(0),...,u^\top(N-1)\big)^\top$.

\subsection{Centralized MPC}

Consider the global MPC controller defined by the following finite-horizon optimal control problem parametrized by $x$:
\begin{subequations}
	\begin{align}
		\begin{split}
			\mathcal{P}(x): \: \min_{\textbf{x},\textbf{u}} \: \sum_{i \in \mathcal{M}} F_i\big( \textbf{x}_i, \textbf{u}_i, \{\textbf{x}_j\}_{j \in \mathcal{N}_i} \big)
		\end{split}\\
		\text{s.t.}& \quad \forall  i \in \mathcal{M}: \nonumber \\
		&x_i(k+1) = f_i\big(x_i(k), u_i(k), \{x_j(k)\}_{j \in \mathcal{N}_i}\big), \: \forall k \in \mathcal{K} \\
		&h_i\big(x_i(k), \{x_j(k)\}_{j \in \mathcal{N}_i}\big) \leq 0, \: \forall k \in \mathcal{K}'\\
		&\big(x_i(k), u_i(k)\big) \in \mathcal{X}_i \times \mathcal{U}_i, \: \forall k \in \mathcal{K}' \label{eq:cent_loc_cnstrs}\\
		&x_i(0) = x_{i}, \label{eq:cent_IC}
	\end{align}
	\label{eq:MPC_opt_prob}\end{subequations}
with
\begin{equation}
	\begin{aligned}
		F_i&\big( \textbf{x}_i, \textbf{u}_i, \{\textbf{x}_j\}_{j \in \mathcal{N}_i} \big) = \\
		&\sum_{k \in \mathcal{K}} \ell_i\big(x_i(k), u_i(k), \{x_j(k)\}_{j \in \mathcal{N}_i}\big) + V_{\text{f}, i}\big(x_i(N)\big).
	\end{aligned}
\end{equation}
The convex functions $\ell_i$ and $V_{\text{f}, i}$ are stage and terminal costs, and the linear functions $h_i$ define coupled convex inequality constraints.
The first entries in the optimal input sequences define feedback control laws $u_{\text{MPC}, i}(x) = u^\star_i(0)$.

Define the set of states $x$ for which $\mathcal{P}$ is feasible as $\mathcal{X}_0$.
For a global state $x \in \mathcal{X}_0$ and control sequence $\textbf{u}$ define the value as
\begin{equation}
	\label{eq:cost}
	V(x, \textbf{u}) = \sum_{i \in \mathcal{M}} F_i\big( \textbf{x}_i, \textbf{u}_i, \{\textbf{x}_j\}_{j \in \mathcal{N}_i} \big),
\end{equation}
if, for all $i \in \mathcal{M}$ and $k \in \mathcal{K}'$, $h_i\big(x_i(k), \{x_j(k)\}_{j \in \mathcal{N}_i}\big) \leq 0$ and $\big(x_i(k), u_i(k)\big) \in \mathcal{X}_i \times \mathcal{U}_i$, and $V(x, \textbf{u}) = \infty$ otherwise, with the state sequences $\textbf{x}_i$ generated by applying $\textbf{u}$ to the dynamics \eqref{eq:dynamics} starting from $x$.
\begin{color}{black}Define a globally feasible set of local control sequences as follows: 
\begin{definition}[Globally feasible local control sequences]
	The set of local control sequences $\{\textbf{u}_i\}_{i \in \mathcal{M}}$ is \textit{globally feasible} for $x$ if the corresponding global control sequence $\textbf{u}$ is feasible for $\mathcal{P}(x)$, i.e., $V(x, \textbf{u}) < \infty$.
\end{definition}\end{color}

The MPC scheme defined by $\mathcal{P}$ considers general coupling between subsystems, i.e., in the costs, dynamics, and constraints.
Solving $\mathcal{P}$ directly is undesirable as the complexity of the optimization problem grows with the number of subsystems.
Additionally, a centralized solution requires centralized coordination, which cannot preserve privacy of the local functions of subsystems, and presents a single point of failure.
It is therefore desirable to solve $\mathcal{P}$ distributively, where each subsystem computes a local control input using only local information and communication with its neighbors.
\begin{remark}
	The approach presented in this paper extends easily to the case where the cost functions $F_i$ also switch depending on the subsystem's PWA region.
For clarity of exposition we do not explicitly include this case in $\mathcal{P}$, as the notation would then become unwieldy.
\end{remark}

\subsection{Distributed MPC}
By introducing copies of coupled states the problem $\mathcal{P}$ can be reformulated for distributed optimization \cite{summersDistributedModelPredictive2012, giselssonFeasibilityStabilityPerformance2014, kohlerDistributedModelPredictive2019}.
Define an augmented state trajectory for each subsystem that includes copies of the state trajectories of neighboring subsystems
\begin{equation}
	\Tilde{\textbf{x}}_i = \big(\textbf{x}_i^\top, \text{col}^\top_{j \in \mathcal{N}_i}(\textbf{x}_j^{(i)})\big)^\top,
\end{equation}
where $\textbf{x}_j^{(i)}$ is agent $i$'s local copy of agent $j$'s state trajectory.
Furthermore, define a global augmented state as $\Tilde{\textbf{x}} = \text{col}_{i \in \mathcal{M}}(\Tilde{\textbf{x}}_i)$.
An equivalent problem to $\mathcal{P}$ is then
\begin{subequations}
	\begin{align}
		\begin{split}
			\mathcal{P}_\text{d}(x): \: \min_{\Tilde{\textbf{x}}, \textbf{u}} \: \sum_{i \in \mathcal{M}} F_i\big( \textbf{x}_i, \textbf{u}_i, \{\textbf{x}_j^{(i)}\}_{j \in \mathcal{N}_i} \big)
		\end{split}\\		
		\text{s.t.}&\quad \forall i \in \mathcal{M}: \nonumber\\
		&x_i(k+1) = f_i\big(x_i(k), u_i(k), \{x_j^{(i)}(k)\}_{j \in \mathcal{N}_i}\big),  \forall k \in \mathcal{K} \label{eq:dynam_cnstrs_dist} \\
		&h_i\big(x_i(k), \{x_j^{(i)}(k)\}_{j \in \mathcal{N}_i}\big) \leq 0, \: \forall k \in \mathcal{K}' \label{eq:couple_cnstr_dist}\\
		& \eqref{eq:cent_loc_cnstrs}, \eqref{eq:cent_IC} \label{eq:dist_local_cnstr}\\\
		&x_j^{(i)}(k) = x_j(k), \: j \in \mathcal{N}_i, \: \forall k \in \mathcal{K}'. \label{eq:consistency_constraint}
	\end{align}
\label{eq:MPC_opt_prob_dist}\end{subequations}
The equality constraint \eqref{eq:consistency_constraint} gives consistency between the true state trajectories and their copies and therefore equivalence between $\mathcal{P}$ and $\mathcal{P}_\text{d}$. 
In $\mathcal{P}_\text{d}$ the stage costs, dynamics, and inequality constraints can be separated across the local variables $\Tilde{\textbf{x}}_i$ and $\textbf{u}_i$.
The alternating direction method of multipliers (ADMM) \cite{boydDistributedOptimizationStatistical2010} can then be applied to solve $\mathcal{P}_\text{d}$ distributively by dualizing the consistency constraint \eqref{eq:consistency_constraint}, and having systems iteratively solve and communicate the solutions to local sub-problems derived from $\mathcal{P}_\text{d}$.
However, due to the PWA dynamics \eqref{eq:dynam_cnstrs_dist} $\mathcal{P}_\text{d}$ is non-convex, and ADMM is hence not guaranteed to converge to an optimal, or even feasible, solution.
Additionally, systems must solve non-convex local optimization problems iteratively, requiring significant computation time or power.
To address this, in the following section we introduce distributed MPC based on a switching ADMM procedure that, leveraging the structure of the PWA dynamics, can guarantee convergence to a feasible solution, and requires solving only convex optimization problems.

\section{Switching ADMM-based Distributed MPC}\label{sec:Sw}
In this section we introduce the proposed distributed MPC algorithm.
First, we show that the structure of $\mathcal{P}_\text{d}$ is piecewise-convex over a finite collection of polytopes.
This structure is then leveraged to formulate the switching ADMM procedure upon which the distributed MPC algorithm is based.

\subsection{Structure of $\mathcal{P}_\text{d}$}

The following analysis is inspired by the \textit{reverse transformation} presented in \cite{mayneModelPredictiveControl2003}, where MPC for a single PWA system was considered.
Define concatenated decision variables as $\textbf{y}_i = (\Tilde{\textbf{x}}_i^\top, \textbf{u}_i^\top)^\top$ and $\textbf{y} = (\Tilde{\textbf{x}}^\top, \textbf{u}^\top)^\top$.
The feasible set for $\mathcal{P}_\text{d}(x)$ is then
\begin{equation}
	\label{eq:feasible_P}
	\mathcal{Y}(x) = \big\{\textbf{y} = (\Tilde{\textbf{x}}^\top, \textbf{u}^\top)^\top \big| \text{\eqref{eq:dynam_cnstrs_dist}\--\eqref{eq:consistency_constraint}}, \forall i \in \mathcal{M}\big\}.
\end{equation}
We now introduce the notion of \textit{switching sequences} that specify a sequence of PWA regions over the prediction horizon.
Consider the switching sequence $s_i \in \mathcal{S}_i = \{1,...,L_i\}^{N+1}$ that specifies the PWA regions for subsystem $i$ as:
\begin{equation}
	\begin{aligned}
		&s_i = s_i(0), s_i(1), ..., s_i(N) \\
		 &\implies x_i(k) \in P_i^{\big(s_i(k)\big)}, \: \forall k \in \mathcal{K}'.
	\end{aligned}
\end{equation}
and the corresponding time-varying linear dynamics
\begin{equation}
	\label{eq:dynams_seq}
	\begin{aligned}
		x_i(k+1) &= {f}_{i}^{(s_i)}\big(x_i(k), u_i(k), \{x_j^{(i)}(k)\}_{j \in \mathcal{N}_i}\big) \\
		 &= A_{i}^{\big(s_i(k)\big)} x_i(k) + B_i^{\big(s_i(k)\big)} u_i(k) \\
		 &\quad + c_i^{\big(s_i(k)\big)} + \sum_{j \in \mathcal{N}_i} A_{ij}^{\big(s_i(k)\big)} x_j^{(i)}(k), \: \forall k \in \mathcal{K}.
	\end{aligned}
\end{equation}
A particular $\textbf{x}_i$ is said to \textit{generate} a switching sequence $s_i$ if 
\begin{equation}
	x_i(k) \in \overline{P}_i^{\big(s_i(k)\big)}, \: \forall k \in \mathcal{K}'.
\end{equation}
Note that more than one $s_i$ can be generated by $\textbf{x}_i$ if $x_i(k)$ lies on the boundary of at least two PWA regions for some $k$, i.e., if $x_i(k) \in \overline{P}_i^{(a)} \cap \overline{P}_i^{(b)}$ then two sequences are generated, one containing $s_i(k) = a$ and the other $s_i(k) = b$.
For a given $x_i$, $\textbf{u}_i$, and $\{\textbf{x}_j^{(i)}\}_{j \in \mathcal{N}_i}$ subsystem $i$ can evaluate the generated switching sequences by rolling out and branching its dynamics as in Algorithm \ref{alg:eval_switching}.
\begin{algorithm}
	\caption{$\text{Eval-switching}$}\label{alg:eval_switching}
	\begin{algorithmic}[1]
		\State \textbf{Inputs}: State $x_i$, input sequence $\textbf{u}_i$, and coupled state copies $\{\textbf{x}_j^{(i)})\}_{j \in \mathcal{N}_i}$
		\State \textbf{Initialize}: $\Phi \gets \Big\{\big(s_i=(1,\dots,1), x_i \big)\Big\}$, set of tuples with sequences (1's as dummy variables) and states
		\For{$k = 0,\dots,N$}
		\State $\Phi_\text{branch} \gets \{\}$
		\For{$\phi = (s_i, x_i) \in \Phi$}
		\State $\text{first-flag} \gets 1$
		\For{$l \in \mathcal{L}_i \: \text{s.t.} \: x_i \in \overline{P}_i^{(l)}$}
		\If{$\text{first-flag}=1$}
		\State $\text{first-flag} \gets 0$
		\State $s_i(k) \gets l$
		\If{$k<N$}
		\State $x_i \gets f_{i}^{(l)}\big(x_i, u_i(k), \{x_j^{(i)}(k)\}_{j \in \mathcal{N}_i}\big)$
		\EndIf
		\Else
		\State $(s_i', x_i') \gets \phi$
		\State $s_i'(k) \gets l$
		\If{$k<N$}
		\State $x_i' \gets f_{i}^{(l)}\big(x_i', u_i(k), \{x_j^{(i)}(k)\}_{j \in \mathcal{N}_i}\big)$
		\EndIf
		\State $\Phi_\text{branch} \gets \Phi_\text{branch} \cup \big\{(s_i', x_i')\big\}$
		\EndIf
		\EndFor
		\EndFor
		\State $\Phi \gets \Phi \cup \Phi_\text{branch}$
		\EndFor
		\State \textbf{Outputs}: $\{s_i\}_{\phi \in \Phi}$
	\end{algorithmic}
\end{algorithm}

Let $s = (s_1, \dots, s_M) \in \mathcal{S}$, $\mathcal{S} = \mathcal{S}_1 \times \dots \times \mathcal{S}_M$, define a global switching sequence specifying local switching sequences for each subsystem.
This global $s$ defines a new optimal control problem
\begin{subequations}
	\begin{align}
		\begin{split}
			\mathcal{P}_s(x): \:  &\min_{\textbf{y}} \: \sum_{i \in \mathcal{M}} F_i\big( \textbf{x}_i, \textbf{u}_i, \{\textbf{x}_j^{(i)}\}_{j \in \mathcal{N}_i} \big)
		\end{split}\\
		\text{s.t.} &\quad \forall  i \in \mathcal{M}: \nonumber\\
		&\eqref{eq:couple_cnstr_dist}\--\eqref{eq:consistency_constraint}, \eqref{eq:dynams_seq} \\
		&x_i(k) \in P_i^{\big(s_i(k)\big)}, \: \forall k \in \mathcal{K}', \label{eq:seq_constr}
	\end{align}
\label{eq:MPC_opt_prob_seq}\end{subequations}
where the time-varying dynamics \eqref{eq:dynams_seq} replace the PWA dynamics and the PWA regions selected by $s$ are enforced with the additional constraints \eqref{eq:seq_constr}.
The feasible set for $\mathcal{P}_s(x)$ is
\begin{equation}
	\mathcal{Y}_s(x) = \big\{\textbf{y} = (\Tilde{\textbf{x}}^\top, \textbf{u}^\top)^\top \big| \text{\eqref{eq:couple_cnstr_dist} - \eqref{eq:consistency_constraint}}, \eqref{eq:dynams_seq}, \eqref{eq:seq_constr}, \forall i \in \mathcal{M}\big\},
\end{equation}
and it is convex as the constraints \text{\eqref{eq:couple_cnstr_dist}$\--$\eqref{eq:consistency_constraint}} are convex, the dynamics \eqref{eq:dynams_seq} are linear, and the PWA regions in \eqref{eq:seq_constr} are convex.
Note that $\mathcal{Y}_s$ is empty for many choices of $s$.
We now show that $\mathcal{Y}$ is the union of the sets $\mathcal{Y}_s$.
\begin{lemma}\label{lemma:1}
	Suppose $x$ and $s$ are given such that $\mathcal{Y}_s(x) \neq \emptyset$. 
	Then for all $\textbf{y} = (\Tilde{\textbf{x}}^\top, \textbf{u}^\top)^\top \in \mathcal{Y}_s(x)$:
	\begin{equation}
		\begin{aligned}
			&f_i\big(x_i(k), u_i(k), \{x_j^{(i)}(k)\}_{j \in \mathcal{N}_i}\big) \\
			&\quad = f_{i, s}\big(x_i(k), u_i(k), \{x_j^{(i)}(k)\}_{j \in \mathcal{N}_i}\big),
		\end{aligned}
	\end{equation}	
	$\forall k \in \mathcal{K},\forall i \in \mathcal{M}$.
\end{lemma}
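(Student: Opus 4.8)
The plan is to argue straight from the definitions. The essential observation is that membership in $\mathcal{Y}_s(x)$ forces, through constraint \eqref{eq:seq_constr}, each state $x_i(k)$ to lie in precisely the region selected by the switching sequence; on that region the PWA map \eqref{eq:dynamics} is, by construction, the affine map that constitutes the switched dynamics \eqref{eq:dynams_seq}. The equality is therefore really a substitution of one selected affine piece for itself, and the proof should be short.

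Concretely, I would fix an arbitrary $\textbf{y} = (\Tilde{\textbf{x}}^\top, \textbf{u}^\top)^\top \in \mathcal{Y}_s(x)$ and arbitrary indices $i \in \mathcal{M}$, $k \in \mathcal{K}$. Since $\textbf{y}$ satisfies every constraint defining $\mathcal{Y}_s(x)$, it satisfies \eqref{eq:seq_constr} in particular, which gives $x_i(k) \in P_i^{(s_i(k))}$. Applying the PWA definition \eqref{eq:dynamics} with region index $l = s_i(k)$ then shows that $f_i\big(x_i(k), u_i(k), \{x_j^{(i)}(k)\}_{j \in \mathcal{N}_i}\big)$ equals $A_i^{(s_i(k))} x_i(k) + B_i^{(s_i(k))} u_i(k) + c_i^{(s_i(k))} + \sum_{j \in \mathcal{N}_i} A_{ij}^{(s_i(k))} x_j^{(i)}(k)$, which is exactly the right-hand side of the switched dynamics $f_{i,s}$ from \eqref{eq:dynams_seq} evaluated at the same arguments. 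As $i$ and $k$ were arbitrary, this establishes the claimed equality for all $i \in \mathcal{M}$ and $k \in \mathcal{K}$.

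The single point deserving care is boundary behavior, which I expect to be the only nontrivial step. Because the regions have only non-overlapping interiors while covering $\mathcal{X}_i$, a feasible $x_i(k)$ may simultaneously lie in $\overline{P}_i^{(l')}$ for some $l' \neq s_i(k)$, so one might worry that the PWA map returns a value associated with a region other than $s_i(k)$. I would resolve this by noting that \eqref{eq:seq_constr} still places $x_i(k)$ in $P_i^{(s_i(k))}$, and that \eqref{eq:dynamics} assigns the region-$s_i(k)$ affine map to every point of $P_i^{(s_i(k))}$; since $f_i$ is a well-defined, single-valued function, the affine pieces of any regions sharing that boundary must coincide there, so no inconsistency can arise. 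Hence the equality holds regardless of whether $x_i(k)$ is interior to or on the boundary of its prescribed region, and the remainder of the argument is the one-line substitution above.
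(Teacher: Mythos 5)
Your proof is correct and follows essentially the same route as the paper's: invoke constraint \eqref{eq:seq_constr} to place $x_i(k)$ in $P_i^{\big(s_i(k)\big)}$ and then substitute the corresponding affine piece of \eqref{eq:dynamics}, which is by definition $f_{i,s}$. Your additional remark on boundary points (well-definedness of $f_i$ forcing adjacent affine pieces to agree on shared boundaries) is a careful touch the paper omits, but it does not change the argument.
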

\begin{proof}
	Let $x$ and $s$ be given, and let $\textbf{y}$ be an arbitrary element of $\mathcal{Y}_s(x)$. 
	From \eqref{eq:seq_constr}, $x_i(k) \in P_i^{\big(s_i(k)\big)}, \forall k \in \mathcal{K}, \forall i \in \mathcal{M}$, so that, from \eqref{eq:dynamics},
	\begin{equation}
		\begin{aligned}
			&f_i\big(x_i(k), u_i(k), \{x_j^{(i)}(k)\}_{j \in \mathcal{N}_i}\big) \\
			 &= \Big(A_{i}^{(l)} x_i + B_i^{(l)} u_i + \sum_{j \in \mathcal{N}_i} A_{ij}^{(l)}\Big)\Big|_{l = s_i(k)} \\
			 & = f_{i, s}\big(x_i(k), u_i(k), \{x_j^{(i)}(k)\}_{j \in \mathcal{N}_i}\big),
		\end{aligned}
	\end{equation}
	for all $k \in \mathcal{K}$ and $i \in \mathcal{M}$.
\end{proof}
\begin{proposition}\label{prop:1}
	The feasible set for problem $\mathcal{P}_\text{d}(x)$ is the union of a finite number of convex sets, that are the feasible sets for the problems $\mathcal{P}_s(x)$, i.e.,
	\begin{equation}
		\label{eq:prop_eq}
		\mathcal{Y}(x) = \bigcup_{s \in \mathcal{S}} \mathcal{Y}_s(x).
	\end{equation}
\end{proposition}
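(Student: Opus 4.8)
The plan is to establish the set equality \eqref{eq:prop_eq} by proving the two inclusions separately. Finiteness of the union is immediate: each $\mathcal{S}_i = \{1,\dots,L_i\}^{N+1}$ is finite, hence so is $\mathcal{S} = \mathcal{S}_1 \times \dots \times \mathcal{S}_M$, and the convexity of each $\mathcal{Y}_s(x)$ has already been argued from the convexity of \eqref{eq:couple_cnstr_dist}--\eqref{eq:consistency_constraint}, the linearity of \eqref{eq:dynams_seq}, and the convexity of the regions in \eqref{eq:seq_constr}. So the only remaining work is the equality of the two sets.

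For the inclusion $\bigcup_{s \in \mathcal{S}} \mathcal{Y}_s(x) \subseteq \mathcal{Y}(x)$, I would fix any $s$ with $\mathcal{Y}_s(x) \neq \emptyset$ (empty sets contribute nothing to the union) and any $\textbf{y} \in \mathcal{Y}_s(x)$. The constraints \eqref{eq:couple_cnstr_dist}, \eqref{eq:dist_local_cnstr}, and \eqref{eq:consistency_constraint} appear verbatim in the definitions of both $\mathcal{Y}_s(x)$ and $\mathcal{Y}(x)$, so they are already satisfied by $\textbf{y}$. The only constraint of $\mathcal{Y}(x)$ that remains to verify is the PWA dynamics \eqref{eq:dynam_cnstrs_dist}, and here Lemma \ref{lemma:1} does the work: since $\textbf{y}$ satisfies the region constraint \eqref{eq:seq_constr}, the PWA update $f_i$ coincides with the linear update $f_{i, s}$ at every $(x_i(k), u_i(k), \{x_j^{(i)}(k)\}_{j \in \mathcal{N}_i})$, and since $\textbf{y}$ satisfies the linear dynamics \eqref{eq:dynams_seq}, it therefore also satisfies \eqref{eq:dynam_cnstrs_dist}. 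Hence $\textbf{y} \in \mathcal{Y}(x)$.

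The reverse inclusion $\mathcal{Y}(x) \subseteq \bigcup_{s \in \mathcal{S}} \mathcal{Y}_s(x)$ is where the real content lies, and I expect it to be the main obstacle. Given $\textbf{y} \in \mathcal{Y}(x)$, I would construct an $s$ with $\textbf{y} \in \mathcal{Y}_s(x)$ by reading off a switching sequence from the state trajectory: since the local constraints \eqref{eq:cent_loc_cnstrs} force $x_i(k) \in \mathcal{X}_i$ and the regions partition the state space, $\bigcup_{l \in \mathcal{L}_i} P_i^{(l)} = \mathcal{X}_i$, for every $i$ and $k$ there is at least one index $l$ with $x_i(k) \in P_i^{(l)}$; selecting one such index as $s_i(k)$ (equivalently, taking a sequence generated by $\textbf{x}_i$, as produced by Algorithm \ref{alg:eval_switching}) yields a global $s \in \mathcal{S}$ that satisfies \eqref{eq:seq_constr} by construction. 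It then remains to check that $\textbf{y} \in \mathcal{Y}_s(x)$: constraints \eqref{eq:couple_cnstr_dist}, \eqref{eq:dist_local_cnstr}, and \eqref{eq:consistency_constraint} carry over directly from $\mathcal{Y}(x)$, and for \eqref{eq:dynams_seq} I would again invoke the pointwise agreement $f_i = f_{i, s}$ on $P_i^{(s_i(k))}$, so that the PWA relation $x_i(k+1) = f_i(\cdots)$ already satisfied by $\textbf{y}$ becomes exactly \eqref{eq:dynams_seq}. The delicate point, and the step I would treat most carefully, is the case in which some $x_i(k)$ lies on the common boundary of several regions, so that the region assignment is non-unique; there I must ensure that the affine branch associated with the chosen $s_i(k)$ reproduces the same successor state $x_i(k+1)$ that $\textbf{y}$ already carries. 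This is guaranteed by well-posedness of the PWA dynamics, i.e., consistency of the affine pieces on shared boundaries, which is precisely the property that allows $\textbf{y}$ to generate, and lie in, more than one $\mathcal{Y}_s(x)$ at once.
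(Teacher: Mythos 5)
Your proof is correct and follows essentially the same route as the paper's: both inclusions are established separately, with Lemma \ref{lemma:1} handling the direction $\bigcup_{s \in \mathcal{S}} \mathcal{Y}_s(x) \subseteq \mathcal{Y}(x)$ and a switching sequence read off from the state trajectory (via the partition property of the regions) handling the converse. Your explicit attention to the shared-boundary case is in fact slightly more careful than the paper's argument, which silently relies on the same well-posedness of $f_i$ across region boundaries.
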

\begin{proof}
	i) Suppose $\textbf{y} \in \mathcal{Y}(x)$.
	Let $s$ be a switching sequence generated by $\textbf{y}$, so that $x_i(k) \in P_i^{\big(s_i(k)\big)}, \forall k \in \mathcal{K}, \forall i \in \mathcal{M}$.
	Hence, $\textbf{y}$ satisfies \eqref{eq:seq_constr} in $\mathcal{P}_s(x)$.
	Furthermore, the dynamics in \eqref{eq:dynamics} reduce to \eqref{eq:dynams_seq}.
	Finally, $\textbf{y}$ satisfies \text{\eqref{eq:couple_cnstr_dist}$\--$\eqref{eq:consistency_constraint}} by definition.
	Thus, $\textbf{y} \in \mathcal{Y}_s(x)$ and $\mathcal{Y}(x) \subseteq \mathcal{Y}_s(x) \subseteq \bigcup_{s \in \mathcal{S}} \mathcal{Y}_s(x)$.
	ii) Now suppose $\textbf{y} \in \bigcup_{s \in \mathcal{S}} \mathcal{Y}_s(x)$.
	Then there exists $s \in \mathcal{S}^M$ such that $\textbf{y} \in \mathcal{Y}_s(x)$.
	By Lemma \ref{lemma:1}, for all $\textbf{y} \in \mathcal{Y}_s(x)$, \eqref{eq:dynams_seq} is equivalent to the dynamics in \eqref{eq:dynamics}, and $\textbf{y}$ satisfies \eqref{eq:dynam_cnstrs_dist}.
	Additionally, $\textbf{y}$ satisfies \text{\eqref{eq:couple_cnstr_dist}$\--$\eqref{eq:consistency_constraint}} by definition.
	Thus,  $\textbf{y} \in \mathcal{Y}(x)$ so that $\bigcup_{s \in \mathcal{S}} \mathcal{Y}_s(x) \subseteq \mathcal{Y}(x)$.
	Equation \eqref{eq:prop_eq} follows.
\end{proof}

Proposition \ref{prop:1} reveals the structure of $\mathcal{P}_\text{d}$. 
It is piecewise convex, with each convex region associated with a global switching sequence $s$ and being defined over the set $\mathcal{Y}_s$.
It follows that, in theory, $\mathcal{P}_\text{d}$ can be solved by solving $\mathcal{P}_s$ for each $s \in \mathcal{S}$, and taking the solution that gives the lowest cost.
However, in practice, this is not feasible as $|S|$ grows exponentially with $M$ and $N$.

\subsection{Switching ADMM Procedure}
Rather than solving every convex piece $\mathcal{P}_s$, we propose to consider one piece at a time.
Applying ADMM to one $\mathcal{P}_s$ will converge to the optimum of that convex problem.
However, during the iterations it can be beneficial to switch to a new $\mathcal{P}_s$ that gives a lower cost.
The proposed idea is then to allow subsystems, during the ADMM iterations, to locally change switching sequences $s_i$, changing the global $s$ and $\mathcal{P}_s$, thereby moving from one convex piece to another convex piece of $\mathcal{P}_\text{d}$, until a local minimum is found.
In this section we formalize this idea.

First, we make the ADMM procedure for solving $\mathcal{P}_s$ explicit.
Define the local constraint set containing the constraints independent of $s_i$ as
\begin{equation}
	\label{eq:local_feas_set}
	\begin{aligned}
		\mathcal{Y}_i(x_i) = &\big\{\textbf{y}_i \big| \eqref{eq:couple_cnstr_dist}, \eqref{eq:dist_local_cnstr} \big),
	\end{aligned}
\end{equation}
and the local constraint set containing the constraints dependent on $s_i$ as
\begin{equation}
	\begin{aligned}
		\mathcal{Y}_{i}^{(s_i)} = &\big\{\textbf{y}_i \big|\eqref{eq:dynams_seq}, \eqref{eq:seq_constr}\big\}.
	\end{aligned}
\end{equation}
Introduce additional copies of each state trajectory $\textbf{z} = (\textbf{z}_1^\top, \textbf{z}_2^\top, ...., \textbf{z}_M^\top)^\top$, with the relevant components of $\textbf{z}$ for subsystem $i$ denoted $\Tilde{\textbf{z}}_i = (\textbf{z}_i^\top, \text{col}^\top_{j \in \mathcal{N}_i}(\textbf{z}_j))^\top$.
The problem $\mathcal{P}_s(x)$ can then be reformulated as
\begin{equation}
	\label{eq:ADMM_form}
	\begin{aligned}
		\min_{\{\textbf{y}_i\}_{i \in \mathcal{M}}} \sum_{i \in \mathcal{M}}  &F_i\big( \textbf{x}_i, \textbf{u}_i, \{\textbf{x}_j^{(i)}\}_{j \in \mathcal{N}_i} \big) \\
		\text{s.t.} &\quad \forall  i \in \mathcal{M}:\\
		\quad &\textbf{y}_i \in \mathcal{Y}_i(x_i) \cup \mathcal{Y}_{i}^{(s_i)} \\
		&\Tilde{\textbf{x}}_i = \Tilde{\textbf{z}}_i,
	\end{aligned}
\end{equation}
with the final constraint forcing agreement across all coupled states.
Recall that in \eqref{eq:ADMM_form} the switching sequences $s_i$ are not optimization variables and have been prespecified.

ADMM solves \eqref{eq:ADMM_form} with the following distributed iterations \cite{boydDistributedOptimizationStatistical2010}:
\begin{subequations}
	\label{eq:ADMM_iters}
	\begin{align}
		\begin{split}
			\textbf{y}_i^{\tau+1} &= \argmin_{\textbf{y}_i \in \mathcal{Y}_i \cup \mathcal{Y}_{i}^{(s_i)}} \:  F_i\big( \textbf{x}_i, \textbf{u}_i, \{\textbf{x}_j^{(i)}\}_{j \in \mathcal{N}_i} \big) + (\bm{\lambda}_i^\tau)^\top \Tilde{\textbf{x}}_i \\
			 &\quad\quad\quad+ \frac{\rho}{2}\|\Tilde{\textbf{x}}_i - \Tilde{\textbf{z}}_i^\tau\|_2^2 \label{eq:ADMM_local}
		\end{split}\\
		\textbf{z}_i^{\tau+1} &= \frac{1}{|\mathcal{N}_i|+1} (\textbf{x}_i^{\tau+1} + \sum_{j \in \mathcal{N}_i}\textbf{x}_i^{(j), \tau+1}) \label{eq:ADMM_averaging}\\
		\bm{\lambda}_i^{\tau+1} &= \bm{\lambda}_i^\tau + \rho(\Tilde{\textbf{x}}_i^{\tau+1} - \Tilde{\textbf{z}}_i^{\tau+1}), \label{eq:ADMM_multipliers_update}
	\end{align}
\end{subequations}
with $\bm{\lambda}_i$ the Lagrange multiplier for subsystem $i$ and $\rho > 0$ a penalty parameter.
This is a distributed message passing algorithm where, in step \eqref{eq:ADMM_local} each subsystem solves a local optimization problem, in step \eqref{eq:ADMM_averaging} the local solutions are communicated and averaged between neighboring subsystems, and in step \eqref{eq:ADMM_multipliers_update} the subsystems update their multipliers locally.
Define the residual $r(\textbf{y})$ as the summed disagreement on the values of shared states
\begin{equation}
	r(\textbf{y}) = \sum_{i \in \mathcal{M}} \sum_{j \in \mathcal{N}_i} \|\textbf{x}_j^{(i)} - \textbf{x}_j\|_2.
\end{equation} 
As $\tau \to \infty$ the local solutions converge to the minimizers of $\mathcal{P}_s$, i.e., $\textbf{y}_i^\tau \to \textbf{y}_i^\star, \forall i \in \mathcal{M}$, and the shared states converge to agreement \cite{boydDistributedOptimizationStatistical2010}
\begin{equation}
	\label{eq:consenus_point}
	\textbf{x}_j^{(i), \tau} - \textbf{x}_j^\tau \to 0, \: \forall j \in \mathcal{N}_i, \: \forall i \in \mathcal{M}, 
\end{equation}
i.e., $r(\textbf{y}^\tau) \to 0$.
Additionally, as the optimizer of $\mathcal{P}_s$ is trivially a feasible point of $\mathcal{Y}_s$, and $\mathcal{Y}_s \subset \mathcal{Y}$, we have that as $\tau \to \infty$
\begin{equation}
	\label{eq:feas_point}
	\textbf{y}^\tau \in \mathcal{Y}.
\end{equation}
In the following we will refer to any point where \eqref{eq:consenus_point} and \eqref{eq:feas_point} hold as a \textit{consensus point}.

Our key insight is that new local switching sequences can be identified when the local solutions to \eqref{eq:ADMM_local} are pushed to the boundary of the feasible set $\mathcal{Y}_{i}^{(s_i)}$.
Indeed, in the case where the current $\mathcal{P}_s$ does not contain a local minimum of $\mathcal{P}_\text{d}$ this is the expected behavior, as the minimum of $\mathcal{P}_s$ is then on the boundary of its feasible set.
When this occurs, subsystems change their local switching sequences $s_i$, which globally changes $s$ and $\mathcal{P}_s$.
In this way, the non-convex problem $\mathcal{P}_\text{d}$ is explored by jumping between the different approximations $\mathcal{P}_s$, with the jumping determined distributively by intermediate local solutions to \eqref{eq:ADMM_local}.
Once local solutions are no longer pushed to boundaries of $\mathcal{Y}_{i}^{(s_i)}$ the system-wide $\mathcal{P}_s$ no longer changes, and the ADMM iterations will converge, as $\mathcal{P}_s$ is convex, to a local minimum of $\mathcal{P}_\text{d}$ and a consensus point.

Let us elaborate with the representative visual example in Fig.  \ref{fig:switching_eg} that depicts two systems, each with two PWA regions $P_i^{(1)}$ and $P_i^{(2)}$ and current states $x_1 \in P_1^{(1)}$ and $x_2 \in P_2^{(1)}$.
To facilitate visualization consider scalar states and inputs $u_i, x_i \in \mathbb{R}$ and a horizon of $N = 1$, such that the control input decision variables for each subsystem are scalar and the switching sequences are of length two. 
There are 16 possible global switching sequences 
\begin{equation}
	\mathcal{S} = \big\{s = (s_1, s_2) | s_1, s_2 \in \{(1, 1), (1, 2), (2, 1), (2, 2)\}\big\}.
\end{equation}
However, as $x_1 \in P_1^{(1)}$ and $x_2 \in P_2^{(1)}$, only switching sequences with $s_1(0) = s_2(0) = 1$ correspond to a non-empty $\mathcal{P}_s$.
The top plot of Fig. \ref{fig:switching_eg} shows, over the control input decision space, the four non-empty convex pieces, $\{\mathcal{P}_s\}_{s \in \hat{\mathcal{S}}}, \hat{\mathcal{S}} = \big\{(s_1, s_2) | s_1, s_2 \in \{(1, 1), (1, 2)\}\big\}$, constituting $\mathcal{P}_\text{d}$.
The bottom plot shows the local decision variables $x_1(1)$ and $x_2(1)$ and the PWA partitions.   
An initial guess $\hat{u}_1(0)$ and $\hat{u}_2(0)$ generates $x_1(1) \in P_1^{(1)}$ and $x_2(1) \in P_2^{(1)}$ (blue dot), and thus $s_1 = s_2 = (1, 1)$.
Each subsystem sets the constraint set $\mathcal{Y}_{i}^{(s_i)}$ that defines the problem $\mathcal{P}_s$ (yellow region).
The subsystems commence the ADMM iterations performing one round of the steps in \eqref{eq:ADMM_iters}.
The output of the local minimization yields new solutions (orange dot), and subsystem 2 identifies that this local solution is on the boundary of $\overline{P}_2^{(1)}$ and $\overline{P}_2^{(2)}$ using Algorithm \ref{alg:eval_switching}. 
Subsystem 2 then changes its switching sequence to $s_2 = (1, 2)$, changing the local constraint set $\mathcal{Y}_{2, s_2}$, and changing the global convex problem to a new $\mathcal{P}_s$ (purple region).
The ADMM iterations continue, now with subsystem 2 having changed its local minimization problem \eqref{eq:ADMM_local} by changing $\mathcal{Y}_{2}^{(s_2)}$. 
At the fourth iteration (purple dot), subsystem 1 identifies a new switching sequence $s_1 = (1, 2)$, and changes $\mathcal{Y}_{1}^{(s_1)}$, changing the global problem to the red region. 
Finally, as the local solutions are no longer pushed to the boundaries of the local constraint sets $\mathcal{Y}_{i}^{(s_i)}$, the iterations converge to the minimum of the convex piece (red region), a local minimum for $\mathcal{P}_\text{d}$.
\begin{figure}
	\centering
	\input{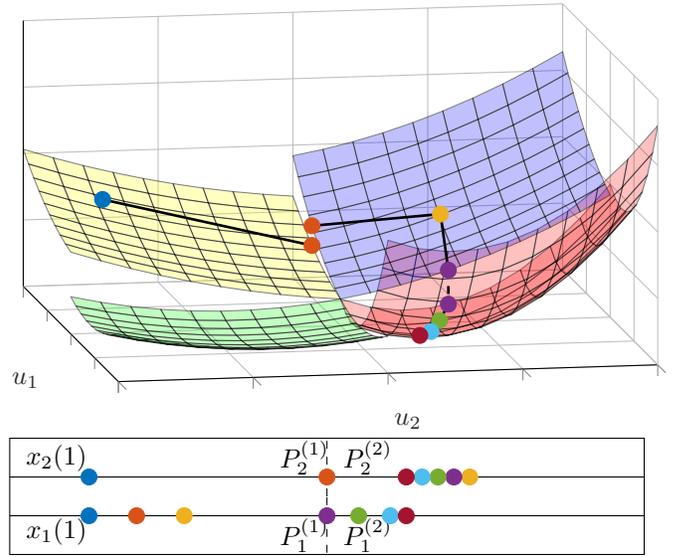}
%
%
\definecolor{mycolor1}{rgb}{0.92900,0.69400,0.12500}%
\definecolor{mycolor2}{rgb}{0.00000,0.44700,0.74100}%
\definecolor{mycolor3}{rgb}{0.49400,0.18400,0.55600}%
\definecolor{mycolor4}{rgb}{0.46600,0.67400,0.18800}%
\definecolor{mycolor5}{rgb}{0.85000,0.32500,0.09800}%
\definecolor{mycolor6}{rgb}{0.30100,0.74500,0.93300}%
\definecolor{mycolor7}{rgb}{0.63500,0.07800,0.18400}%
\begin{tikzpicture}

\begin{axis}[%
height=0.4*\axisdefaultheight,
width=\axisdefaultwidth,
at={(0.758in,0.481in)},
scale only axis,
xmin=-2,
xmax=2,
ticks=none,
xtick={},
xticklabels={{},{},{},{},{},{},{},{},{}},
ymin=-1,
ymax=2,
ytick={-1,-0.5,0,0.5,1,1.5,2},
yticklabels={{},{},{},{},{},{},{}},
axis background/.style={fill=white}
]
\addplot [color=black, forget plot]
  table[row sep=crcr]{%
-2	0\\
2	0\\
};
\addplot [color=black, forget plot]
  table[row sep=crcr]{%
-2	1\\
2	1\\
};

\newcommand{\ms}{3.0pt}
\addplot [color=mycolor1, mark size=\ms, mark=*, mark options={solid, fill=mycolor2, draw=mycolor2}, forget plot]
  table[row sep=crcr]{%
-1.5	0\\
};
\addplot [color=mycolor3, mark size=\ms, mark=*, mark options={solid, fill=mycolor2, draw=mycolor2}, forget plot]
  table[row sep=crcr]{%
-1.5	1\\
};
\addplot [color=mycolor4, mark size=\ms, mark=*, mark options={solid, fill=mycolor5, draw=mycolor5}, forget plot]
  table[row sep=crcr]{%
-1.2	0\\
};
\addplot [color=mycolor6, mark size=\ms, mark=*, mark options={solid, fill=mycolor5, draw=mycolor5}, forget plot]
  table[row sep=crcr]{%
-0	1\\
};
\addplot [color=mycolor7, mark size=\ms, mark=*, mark options={solid, fill=mycolor1, draw=mycolor1}, forget plot]
  table[row sep=crcr]{%
-0.9	0\\
};
\addplot [color=mycolor2, mark size=\ms, mark=*, mark options={solid, fill=mycolor1, draw=mycolor1}, forget plot]
  table[row sep=crcr]{%
0.9	1\\
};
\addplot [color=mycolor5, mark size=\ms, mark=*, mark options={solid, fill=mycolor3, draw=mycolor3}, forget plot]
  table[row sep=crcr]{%
-0	0\\
};
\addplot [color=mycolor1, mark size=\ms, mark=*, mark options={solid, fill=mycolor3, draw=mycolor3}, forget plot]
  table[row sep=crcr]{%
0.8	1\\
};
\addplot [color=mycolor3, mark size=\ms, mark=*, mark options={solid, fill=mycolor4, draw=mycolor4}, forget plot]
  table[row sep=crcr]{%
0.2	0\\
};
\addplot [color=mycolor4, mark size=\ms, mark=*, mark options={solid, fill=mycolor4, draw=mycolor4}, forget plot]
  table[row sep=crcr]{%
0.7	1\\
};
\addplot [color=mycolor6, mark size=\ms, mark=*, mark options={solid, fill=mycolor6, draw=mycolor6}, forget plot]
  table[row sep=crcr]{%
0.4	0\\
};
\addplot [color=mycolor7, mark size=\ms, mark=*, mark options={solid, fill=mycolor6, draw=mycolor6}, forget plot]
  table[row sep=crcr]{%
0.6	1\\
};
\addplot [color=mycolor2, mark size=\ms, mark=*, mark options={solid, fill=mycolor7, draw=mycolor7}, forget plot]
  table[row sep=crcr]{%
0.5	0\\
};
\addplot [color=mycolor5, mark size=\ms, mark=*, mark options={solid, fill=mycolor7, draw=mycolor7}, forget plot]
  table[row sep=crcr]{%
0.5	1\\
};
\addplot [color=black, dashed, forget plot]
  table[row sep=crcr]{%
0	-3\\
0	0.5\\
};
\addplot [color=black, dashed, forget plot]
  table[row sep=crcr]{%
0	0.5\\
0	3\\
};
\node[right, align=left, inner sep=0, font=\bfseries]
at (axis cs:-1.9,-0.4) {$x_1(1)$};
\node[right, align=left, inner sep=0, font=\bfseries]
at (axis cs:-1.9,1.5) {$x_2(1)$};
\node[right, align=left, inner sep=0, font=\bfseries]
at (axis cs:-0.3,-0.5) {${P}_1^{(1)}$};
\node[right, align=left, inner sep=0, font=\bfseries]
at (axis cs:0.1,-0.5) {${{P}}_1^{(2)}$};
\node[right, align=left, inner sep=0, font=\bfseries]
at (axis cs:-0.3,1.5) {${P}_2^{(1)}$};
\node[right, align=left, inner sep=0, font=\bfseries]
at (axis cs:0.1,1.5) {${{P}}_2^{(2)}$};
\end{axis}
\end{tikzpicture}%
	\caption{Visualization of switching ADMM procedure.}
	\label{fig:switching_eg}
\end{figure}

\begin{color}{black}Generating local switching sequences with Algorithm \ref{alg:eval_switching} requires a set of local copies $\{\textbf{x}_j^{(i)}\}_{j \in \mathcal{N}_i}$.
While these are an output of the local minimization step \eqref{eq:ADMM_local}, for the initial switching sequence, i.e., determining the yellow region from the blue dot in Fig.  \ref{fig:switching_eg}, these values can be obtained with the distributed procedure outlined in Algorithm \ref{alg:dynam_rollout}.
This procedure involves $N$ steps of rolling out local dynamics under an initial guess for the control sequence, and neighbor-to-neighbor communications.
Additional outputs of Algorithm \ref{alg:dynam_rollout} are the predicted local state trajectory $\textbf{x}_i$ and the local cost $F_i$, under the initial control guess.
These become useful in Section \ref{sec:stab_feas}.\end{color}
\begin{algorithm}
	\color{black}
	\caption{$\text{D-rout}$}\label{alg:dynam_rollout}
	\begin{algorithmic}[1]
		\State \textbf{Inputs}: States $\{x_i\}_{i \in \mathcal{M}}$ and control sequences $\{\hat{\textbf{u}}_i\}_{i \in \mathcal{M}}$
		\State \textbf{Initialize}: $x_i(k) \gets 0, x_j^{(i)}(k) \gets 0, F_{i, \text{dr}} \gets 0, \: \forall k \in \mathcal{K}', \: \forall j \in \mathcal{N}_i, \: \forall i \in \mathcal{M}$
		\For{$k = 0,\dots,N-1$}
		
		 $\forall i \in \mathcal{M}$:
		 \State $x_i(k) \gets x_i$
		\State Transmit $x_i$ to each neighbor $j \in \mathcal{N}_i$
		\State For each $j \in \mathcal{N}_i$: receive $x_j$ and set $x_j^{(i)}(k) \gets x_j$
		\If{$k < N$}
			\State $F_{i, \text{dr}} \gets F_{i, \text{dr}} +\ell_i\big(x_i, \hat{u}_i(k), \{x_j^{(i)}(k)\}_{j \in \mathcal{N}_i}\big)$
			\State $x_i \gets f_i\big(x_i, \hat{u}_i(k), \{x_j\}_{j \in \mathcal{N}_i}\big)$
		\Else
			\State $F_{i, \text{dr}} \gets F_{i, \text{dr}} +V_{\text{f}, i}\big(x_i\big)$
		\EndIf
		\EndFor
		\State \textbf{Outputs}: $\big\{\textbf{y}_{i, \text{dr}} = \big(\textbf{x}_i^\top, \text{col}^\top_{j \in \mathcal{N}_i}(\textbf{x}_j^{(i)}), \hat{\textbf{u}}^\top_i\big)^\top, F_{i, \text{dr}}\big\}_{i \in \mathcal{M}}$
	\end{algorithmic}
\end{algorithm}

Algorithm \ref{alg:switching_ADMM} formalizes the switching ADMM procedure.
\begin{color}{black}As an input to Algorithm \ref{alg:switching_ADMM} a set of initial guesses $\{\hat{\textbf{u}}_i\}_{i \in \mathcal{M}}$ is required that generates valid switching sequences in steps \ref{step:d_rout}$-$\ref{step:set_s}.
Such control sequences are defined as follows:
\begin{definition}[Weakly feasible local control sequences]
	The set of local control sequences $\{\textbf{u}_i\}_{i \in \mathcal{M}}$ is \textit{weakly feasible} for $x$ if the corresponding global switching sequence $s = (s_1,\dots,s_M)$, generated in steps \ref{step:d_rout}$-$\ref{step:set_s} of Algorithm \ref{alg:switching_ADMM}, defines a feasible optimal control problem $\mathcal{P}_s(x)$, i.e., $\mathcal{Y}_s(x) \neq \emptyset$.
	Trivially, any globally feasible set of control sequences is also weakly feasible.
\end{definition}
Note that generating weakly feasible control sequences is significantly easier than generating globally feasible control sequences, as weakly feasible control sequences do not need to satisfy the constraints in $\mathcal{P}_\text{d}(x)$, in particular the terminal constraints; rather, the only requirement is that they generate a global switching sequence $s$ for which \textit{there exists} a solution to the corresponding problem $\mathcal{P}_s(x)$.\end{color}
Construction of these guesses is discussed in Sections \ref{sec:sub_dmpc} and \ref{sec:stab_feas}.
\textcolor{black}{Further, we highlight that in Algorithm \ref{alg:switching_ADMM} communication between subsystems occurs only in steps \ref{step:transmit} and \ref{step:receive}, where the communicated quantities are the predicted state trajectories $\textbf{x}_i^{(j), \tau+1}$ for each $j \in \mathcal{N}_i$.}

We introduce a cut-off number of iterations $T_\text{cut}$, after which the switching sequences $s_i$ no longer change.
This protects against the edge case where local solutions pull each other into new switching sequences with equal force, resulting in continuous oscillation between two convex regions of $\mathcal{P}_\text{d}$.
We note that, in our experiments, the continuous switching case is observed only rarely, with the iterations otherwise converging to a problem $\mathcal{P}_s$ that contains a local minimum of $\mathcal{P}_\text{d}$.
\begin{algorithm}
	\caption{Sw-ADMM: procedure for solving $\mathcal{P}_\text{d}$}\label{alg:switching_ADMM}
	\begin{algorithmic}[1]
		\State \textbf{Inputs}: States $\{x_i\}_{i \in \mathcal{M}}$, initial guesses $\{\hat{\textbf{u}}_i\}_{i \in \mathcal{M}}$, and initial local copies $\big\{\{\textbf{x}_j^{(i)}\}_{j \in \mathcal{N}_i}\big\}_{i \in \mathcal{M}}$
		\State \textbf{Initialize:} $\textbf{z}_i \gets 0$ and $\bm{\lambda}_i \gets 0, \: \forall i \in \mathcal{M}$
		\State $S_i \gets \text{Eval-switching}\big(x_i, \hat{\textbf{u}}_i, \{\textbf{x}_j^{(i)}\}_{j \in \mathcal{N}_i}\big), \: \forall i \in \mathcal{M}$
		\State $s_i \gets s_i' \in S_i$ and set local $\mathcal{Y}_{i}^{(s_i)}, \: \forall i \in \mathcal{M}$\label{step:set_s}
		\For{$\tau = 0,\dots, T_\text{ADMM} - 1$}
		
			$\forall i \in \mathcal{M}$:
			
			\State Get $\textbf{y}_i^{\tau+1} = (\Tilde{\textbf{x}}^{\tau+1}_i, \textbf{u}^{\tau+1}_i)^\top$ from local minimization \eqref{eq:ADMM_local}, where $\Tilde{\textbf{x}}^{\tau+1}_i = \big((\textbf{x}_i^{\tau+1})^\top, \text{col}^\top_{j \in \mathcal{N}_i}(\textbf{x}_j^{(i), \tau+1})\big)^\top$
			\State \begin{color}{black}For each $j \in \mathcal{N}_i$: transmit $\textbf{x}_j^{(i), \tau+1}$ to neighbor $j$\end{color}\label{step:transmit}
			\State \begin{color}{black}For each $j \in \mathcal{N}_i$: receive $\textbf{x}_i^{(j), \tau+1}$ from neighbor $j$\end{color}\label{step:receive}
			\State Get $\textbf{z}_i^{\tau+1}$ as \eqref{eq:ADMM_averaging}
			\State Get $\bm{\lambda}_i^{\tau+1}$ as \eqref{eq:ADMM_multipliers_update}
			\If{$\tau < T_\text{cut}$}
				\State $S_i \gets \text{Eval-switching}\big(x_i, \textbf{u}^{\tau+1}_i, \{\textbf{x}_j^{(i), \tau+1}\}_{j \in \mathcal{N}_i}\big)$
				\If{$S_i \setminus \{s_i\} \neq \emptyset$}
					\State $s_i \gets s_i' \in S_i \setminus \{s_i\}$ and set local $\mathcal{Y}_{i}^{(s_i)}$
				\EndIf
			\EndIf
		\EndFor
		\State \textbf{Outputs}: $\{\textbf{y}_i^{T_\text{ADMM}}\}_{i \in \mathcal{M}}$
	\end{algorithmic}
\end{algorithm}

We now give the convergence result for Algorithm \ref{alg:switching_ADMM}.
Following the introduction of $T_\text{cut}$ an additional assumption is required to prove convergence of the algorithm.
\begin{assumption}
	\label{as:switching_assumption}
	\begin{color}{black}For all $x \in \mathcal{X}_0$ there does not exist a global switching sequence $s = (s_1, \dots, s_M)$ such that
	\begin{equation}
		 \big(\mathcal{Y}_s(x) = \emptyset\big) \land \big(\mathcal{Y}_i(x_i) \cup \mathcal{Y}_{i}^{(s_i)} \neq \emptyset, \: \forall i \in \mathcal{M}\big).
	\end{equation}\end{color}
\end{assumption}
Assumption \ref{as:switching_assumption} protects against the edge case where, when $\tau = T_\text{cut}$, the local systems are in a configuration of local switching sequences that represent a $\mathcal{P}_s$ with an empty feasible set.
\begin{color}{black}Additionally, as the convergence of ADMM is an asymptotic result, we make the following assumption:
\begin{assumption}\label{ass:admm}
		$T_\text{ADMM} \to \infty$.
\end{assumption}\end{color}
\begin{proposition} \label{prop:switch_converge}
	For all $x \in \mathcal{X}_0$ and weakly feasible $\{\hat{\textbf{u}}_i\}_{i \in \mathcal{M}}$, under Assumptions \ref{as:switching_assumption} and \ref{ass:admm}, the outputs of Algorithm \ref{alg:switching_ADMM}, $\{\textbf{y}_i^{T_\textup{ADMM}}\}_{i \in \mathcal{M}}$, converge to a consensus point, i.e., $r(\textbf{y}^{T_\textup{ADMM}}) = 0$ and $\textbf{y}^{T_\textup{ADMM}} \in \mathcal{Y}$.
\end{proposition}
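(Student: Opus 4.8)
The plan is to exploit the cut-off index $T_\text{cut}$. For every $\tau \ge T_\text{cut}$ the conditional switching block of Algorithm~\ref{alg:switching_ADMM} is skipped, so the local switching sequences $s_i$, and hence the global sequence $\bar{s} = (s_1,\dots,s_M)$, are frozen. On this tail the local minimizations \eqref{eq:ADMM_local} have a fixed feasible set and the updates \eqref{eq:ADMM_iters} coincide \emph{exactly} with standard consensus ADMM applied to the convex reformulation \eqref{eq:ADMM_form} of the single problem $\mathcal{P}_{\bar{s}}(x)$. The claim then reduces to two assertions: that $\mathcal{P}_{\bar{s}}(x)$ is feasible, i.e.\ $\mathcal{Y}_{\bar{s}}(x) \neq \emptyset$, and that ADMM on this fixed convex problem drives the residual to zero and the iterate into $\mathcal{Y}_{\bar{s}}(x)$.

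First I would show, by induction on $\tau$, that every local subproblem stays feasible. At initialization, weak feasibility of $\{\hat{\textbf{u}}_i\}_{i \in \mathcal{M}}$ gives $\mathcal{Y}_{s}(x) \neq \emptyset$ for the sequence set in step~\ref{step:set_s}; extracting the block $\textbf{y}_i$ from any point of $\mathcal{Y}_{s}(x)$ shows each local feasible set is nonempty. For the inductive step, subsystem $i$ switches to some $s_i' \in S_i \setminus \{s_i\}$ only when Algorithm~\ref{alg:eval_switching}, rolling out the dynamics from the current iterate $\textbf{y}_i^{\tau+1}$, branches at a state $x_i(k)$ lying in the closures of two regions. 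The branched sequence $s_i'$ then carries the \emph{same} state trajectory but differing region labels at the boundary times, so the iterate $\textbf{y}_i^{\tau+1}$ — which already satisfies the $s_i$-independent constraints $\mathcal{Y}_i(x_i)$, lies in $P_i^{(s_i'(k))}$, and obeys the $s_i'$-dynamics \eqref{eq:dynams_seq} at the shared boundary — is itself feasible for the new local problem. Hence at $\tau = T_\text{cut}$ all local subproblems are feasible. Invoking Assumption~\ref{as:switching_assumption}, which rules out precisely the configuration in which all local feasible sets are nonempty while the joint set $\mathcal{Y}_{\bar{s}}(x)$ is empty, yields $\mathcal{Y}_{\bar{s}}(x) \neq \emptyset$.

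With $\mathcal{P}_{\bar{s}}(x)$ now a feasible convex program, I would invoke the standard ADMM convergence result \cite{boydDistributedOptimizationStatistical2010} already used for \eqref{eq:consenus_point}--\eqref{eq:feas_point}. Under Assumption~\ref{ass:admm} the frozen tail runs for infinitely many iterations, so the shared-state disagreement vanishes, $r(\textbf{y}^{T_\text{ADMM}}) \to 0$, and the iterate converges to a minimizer of $\mathcal{P}_{\bar{s}}$, which lies in $\mathcal{Y}_{\bar{s}}(x)$. By Proposition~\ref{prop:1}, $\mathcal{Y}_{\bar{s}}(x) \subseteq \bigcup_{s \in \mathcal{S}} \mathcal{Y}_s(x) = \mathcal{Y}(x)$, so $\textbf{y}^{T_\text{ADMM}} \in \mathcal{Y}$; together with $r(\textbf{y}^{T_\text{ADMM}}) = 0$ this is exactly a consensus point, as required.

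I expect the main obstacle to be the inductive feasibility step. The switching rule is driven by \emph{intermediate}, not-yet-converged iterates that sit on region boundaries, and one must verify that such a boundary iterate — produced under the \emph{old} sequence's linear dynamics — is genuinely feasible for the \emph{new} sequence's constraints in $\mathcal{Y}_{i}^{(s_i')}$, in particular the time-varying dynamics \eqref{eq:dynams_seq}. This hinges on the closed-region membership used in \eqref{eq:seq_constr} together with the agreement of neighbouring affine pieces on their shared boundary, which is what makes the branched trajectory coincide with the original one. Only once local feasibility is secured can Assumption~\ref{as:switching_assumption} be legitimately applied to lift it to global feasibility of $\mathcal{P}_{\bar{s}}$, the step that finally unlocks the classical ADMM guarantee.
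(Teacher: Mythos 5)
Your proposal is correct and follows essentially the same route as the paper's proof: freeze the switching sequences at $T_\text{cut}$, invoke Assumption \ref{as:switching_assumption} to get $\mathcal{Y}_s(x)\neq\emptyset$ for the frozen sequence, and then apply the standard convergence guarantee for ADMM on a feasible convex problem. The only difference is that you explicitly verify, by induction on the iterations, that every local feasible set is nonempty at $\tau = T_\text{cut}$ (the precondition needed to actually apply Assumption \ref{as:switching_assumption}), a step the paper's proof leaves implicit.
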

\begin{proof}
	For iterates $\tau \geq T_\text{cut}$ the local sets $\mathcal{Y}_{i}^{(s_i)}$ are fixed and, by Assumption \ref{as:switching_assumption}, the corresponding $\mathcal{P}_s$ has a non-empty feasible set. Therefore, as $\mathcal{P}_s$ is convex, the standard ADMM iterations \eqref{eq:ADMM_iters} converge to a consensus point \cite{boydDistributedOptimizationStatistical2010}.
\end{proof}
\begin{remark}
	Assumption \ref{as:switching_assumption} is an assumption on the strength of the coupling between subsystems.
	Providing conditions to easily verify Assumption \ref{as:switching_assumption} is beyond the scope of the current paper.
	An alternative approach, avoiding Assumption \ref{as:switching_assumption}, would be to permit subsystems to switch their local switching sequence only after the new global convex region has been checked for feasibility.
	This is left to future work.
\end{remark}
\begin{color}{black}
\begin{remark}
	While the convergence of ADMM, and hence Algorithm \ref{alg:switching_ADMM}, is asymptotic, in practice ADMM often converges to an acceptable accuracy within a few iterations \cite{boydDistributedOptimizationStatistical2010}, motivating the use of a finite $T_\text{ADMM}$.
	Furthermore, if errors induced by finite termination were to become an issue, existing work that ensures feasibility and stability under finitely terminated distributed optimization \cite{kohlerDistributedModelPredictive2019, rostamiADMMBasedAlgorithmStabilizing2023} can be added to our approach.
\end{remark}\end{color}

\subsection{Distributed MPC Algorithm} \label{sec:sub_dmpc}
Leveraging the switching ADMM procedure we now present the distributed MPC algorithm in Algorithm \ref{alg:dmpc}.
In step \ref{step:guess} each subsystem generates a weakly feasible guess $\hat{\textbf{u}}_i$.
In Section \ref{sec:stab_feas} it is shown how subsystems can locally generate these guesses using a terminal control law.
Here, however, we highlight that these initial guesses can often be generated locally using heuristics specific to the application.
Additionally, a common heuristic is to use the shifted solution from the previous time step, i.e., $\hat{\textbf{u}}_i = \big(\bar{u}_i^\top(1),\dots,\bar{u}_i^\top(N-2), \bar{u}_i^\top(N-2)\big)^\top$, where $\bar{\textbf{u}}_i = \big(\bar{u}_i^\top(0),\dots,\bar{u}_i^\top(N-1)\big)^\top$ is the solution of the previous time step.
Finally, state constraints can be made soft constraints through the introduction of slack variables, penalized in the objective function, to ensure any initial feasible guess is globally, and hence also weakly, feasible.

As Algorithm \ref{alg:switching_ADMM} generates, in general, a local minimum of $\mathcal{P}_\text{d}$, the quality of the solution could be improved by running it in parallel with different initial guesses, introducing an effective multi-start approach.
This is of course limited by the communication and computation limits of the subsystems.
\begin{algorithm}
	\caption{$\text{Distributed MPC, executed each time step}$}\label{alg:dmpc}
	\begin{algorithmic}[1]
		\State Measure local states $x_i, \: \forall i \in \mathcal{M}$
		\State Generate initial weakly feasible guesses $\hat{\textbf{u}}_i, \: \forall i \in \mathcal{M}$ \label{step:guess}
		\begin{color}{black}
			\State $\begin{aligned}\Big\{\textbf{y}_{i, \text{dr}}, F_{i, \text{dr}}\Big\}_{i \in \mathcal{M}} \gets \text{D-rout}\big(\{x_i\}_{i \in \mathcal{M}}, \{\hat{\textbf{u}}_i\}_{i \in \mathcal{M}}\big)\end{aligned}$\label{step:d_rout}
		\end{color}
		\State Solve $\mathcal{P}_\text{d}(x)$ with Algorithm \ref{alg:switching_ADMM}: $\big\{\textbf{y}_i = (\Tilde{\textbf{x}}_i^\top, \textbf{u}_i^\top)^\top\big\}_{i \in \mathcal{M}} \gets \text{Sw-ADMM}\big(\{x_i\}_{i \in \mathcal{M}}, \{\hat{\textbf{u}}_i\}_{i \in \mathcal{M}}, \big\{\{\textbf{x}_j^{(i)}\}_{j \in \mathcal{N}_i}\big\}_{i \in \mathcal{M}}\big)$
		\State Apply local inputs $u_i(0), \: \forall i \in \mathcal{M}$
	\end{algorithmic}
\end{algorithm}

\section{Stability and Recursive Feasibility}\label{sec:stab_feas}
In this section we give a stable and recursively feasible extension to Algorithm \ref{alg:dmpc} under additional assumptions on the system.
Given the local properties of Algorithm \ref{alg:switching_ADMM}, we leverage a dual-mode approach to stability \cite{maDistributedMPCBased2023, scokaertSuboptimalModelPredictive1999}, where terminal constraints are added to the MPC scheme and subsystems use stable switching linear controllers when they enter a terminal set.
This introduces a complex trade-off between the size of the terminal set, the length of the prediction horizon, and closed-loop performance.
For a larger terminal set the systems use linear control laws for a larger portion of the state-space, using the optimization-based MPC control law less and foreseeably sacrificing performance.
With a smaller terminal set, on the other hand, the MPC controller is used more, but requires a longer horizon to satisfy the terminal constraint.
We note that, as Algorithm \ref{alg:switching_ADMM} involves only convex programs, long horizons present less computational demand than for approaches that rely on mixed-integer programming.

\subsection{Assumptions}
Define $\hat{\mathcal{L}}_i = \{l \in \mathcal{L}_i | 0 \in \overline{P}_i^{(l)}\}$ the set of subsystems $i$'s PWA regions containing or touching the origin.
We introduce the following extra assumptions on the systems considered.
\begin{assumption}[\cite{maDistributedMPCBased2023}] \label{as:system}
	\begin{color}{black}For all $i \in \mathcal{M}$ the matrix pairs $\Big(A_i^{(l)}, B_i^{(l)}\Big), \: \forall l \in \mathcal{L}_i$, are controllable; so for all $l \in {\mathcal{L}}_i$ there exists a gain $K_i^{(l)}$ such that $A_i^{(l)} + B_i^{(l)}K_i^{(l)}$ is a Schur matrix.\end{color} 
	The origin is an equilibrium point with $u_i = 0$ and $x_j = 0, \: \forall j \in \mathcal{N}_i$, i.e., $c_i^{(l)} = 0, \: \forall l \in \hat{\mathcal{L}}_i$.
\end{assumption}
Assumption \ref{as:system} says that, for each subsystem, the origin is an equilibrium of the dynamics in all PWA regions that touch it, and that there exists a stabilizing linear feedback law within each of these regions.
\begin{assumption} \label{as:term}
	For all $i \in \mathcal{M}$ there exists a non-empty terminal set $\mathcal{X}_{\text{T}, i} \subseteq \bigcup_{l \in \hat{\mathcal{L}}_i} P_i^{(l)}$, containing the origin, such that $\forall x_i \in \mathcal{X}_{\text{T}, i}$ we have $f_i\big(x_i, u_i, \{x_j\}_{j \in \mathcal{N}_i}\big) \in \mathcal{X}_{\text{T}, i}, \: \forall x_j \in \mathcal{X}_{\text{T}, j}, \: \forall j \in \mathcal{N}_i$, with $u_i = K_i^{(l)} x_i$ and $K_i^{(l)} x_i \in \mathcal{U}_i$. 
\end{assumption}
Define the global terminal set to be $\mathcal{X}_\text{T} = \{x | x_i \in \mathcal{X}_{\text{T}, i}, \forall i \in \mathcal{M}\}$.
Assumption \ref{as:term} guarantees that there exist local terminal sets that are positively invariant under the switching terminal controllers and that are robust to the effects of coupling when neighboring subsystems are also within their terminal sets.
This is a less restrictive assumption than that in \cite{maDistributedMPCBased2023}, where the terminal sets are assumed to be robust to the effects of coupling for the entire state space of neighboring systems $\mathcal{X}_j$.
In contrast to \cite{maDistributedMPCBased2023}, this less restrictive assumption is possible due to the proposed approach explicitly giving agreement on the values of shared states over the prediction horizon, rendering the approach applicable to systems with stronger coupling.
Note that \cite{maDistributedMPCBased2023} proposes a method to compute the terminal sets required for the controller in \cite{maDistributedMPCBased2023}.
These sets trivially satisfy Assumption \ref{as:term} as $\mathcal{X}_{\text{T}, j} \subset \mathcal{X}_j$.
\begin{color}{black}In Appendix \ref{ap:sets} we present a modification to the procedure given in \cite{maDistributedMPCBased2023} to calculate less restrictive sets satisfying Assumption \ref{as:term}.
Additionally, we provide a formal proof, missing in \cite{maDistributedMPCBased2023}, for the forward invariance property of the resulting sets.\end{color}
Clearly, the sets required by \cite{maDistributedMPCBased2023} do not exist for systems with unbounded state spaces, while the sets in Assumption \ref{as:term} can still exist.
\begin{assumption} \label{as:lypanov}
	The stage and terminal costs are of the form $\ell_i(x_i, u_i, \{x_j\}_{j \in \mathcal{N}_i}) = x_i^\top Q_i x_i + u_i^\top R u_i + \sum_{j \in \mathcal{N}_i} x_j^\top Q_{ij} x_j$ and $V_{\text{f}, i}(x) = x_i^\top \Phi_i x_i$ for positive definite $Q_i, R_i$, and $\Phi_i$. 
	The terminal costs $V_{\text{f}, i}$ and controllers $K_i^{(l)}$ are such that for all $x \in \mathcal{X}_\text{T}$,
	\begin{equation} \label{eq:lyapnuov}
		\begin{aligned}
			\sum_{i \in \mathcal{M}}\Big(&V_{\text{f}, i} \big( f_i(x_i, {u}_i, \{x_j\}_{j \in \mathcal{N}_i}) \big) - V_{\text{f}, i}(x_i) \\
			&\quad\quad + \ell_i(x_i, {u}_i, \{x_j\}_{j \in \mathcal{N}_i}) \Big) \leq 0,
		\end{aligned}
	\end{equation}
	where ${u}_i = K_i^{(l)} x_i, x_i \in P_i^{(l)}$.
\end{assumption}
Assumption \ref{as:lypanov} says that the terminal cost matrices $\Phi_i$ result in $\sum_{i \in \mathcal{M}}V_{\text{f}, i}$ being a global common quadratic Lyapunov function for all possible combinations of local PWA regions within the terminal set.
The computation of such terminal costs has been covered for single PWA systems in \cite{lazarStabilizingModelPredictive2006} and for distributed linear systems in \cite{conteDistributedSynthesisStability2016}.
In Appendix \ref{ap:LMI} we outline an LMI approach to constructing terminal costs $V_{\text{f}, i}$ that satisfy Assumption \ref{as:lypanov}.
\begin{color}{black}While Assumption \ref{as:lypanov} may be restrictive if the origin is on the boundary of many PWA regions, note that we could restrict the local terminal sets to a single PWA region, $\mathcal{X}_{\text{T}, i} \subseteq P_i^{(l_i)}, l_i \in \hat{\mathcal{L}}_i$, such that finding a terminal cost satisfying Assumption \ref{as:lypanov} reduces to finding a Lyapunov function for the linear system defined by the PWA regions $\{P_i^{(l_i)}\}_{i \in \mathcal{M}}$, rather than a common Lyapnuov function over arbitrary switching.
Again we note that, as Algorithm \ref{alg:switching_ADMM} involves only convex programs, long horizons are computationally tractable and smaller terminal sets are therefore viable.\end{color}
\begin{color}{black}\begin{assumption}\label{as:initial_guess}
	A weakly feasible initial set of control sequences is assumed to be known for the first time step.
\end{assumption}
We highlight that in the distributed MPC literature it is common to assume access to a \textit{globally feasible} set of initial control sequence \cite{richardsRobustDistributedModel2007, asadiScalableDistributedModel2018}, which is a much stronger assumption than Assumption \ref{as:initial_guess}.\end{color}

\subsection{Stabilizing Distributed MPC Algorithm}
With a slight abuse of notation, we redefine the optimal control problems $\mathcal{P}_\text{d}(x)$ and $\mathcal{P}_s(x)$ by modifying the feasible sets $\mathcal{Y}$ in \eqref{eq:feasible_P} and $\mathcal{Y}_i$ in \eqref{eq:local_feas_set} to include terminal constraints
\begin{equation}
	\begin{aligned}
		\mathcal{Y}(x) &= \big\{\textbf{y} \big| \text{\eqref{eq:dynam_cnstrs_dist} - \eqref{eq:consistency_constraint}}, x_i(N) \in \mathcal{X}_{\text{T}, i}, \forall i \in \mathcal{M}\big\} \\
		\mathcal{Y}_i(x_i) &= \big\{\textbf{y}_i \big| \eqref{eq:couple_cnstr_dist}, \eqref{eq:dist_local_cnstr}, x_i(N) \in \mathcal{X}_{\text{T}, i} \big\},
	\end{aligned}
\end{equation}
naturally redefining $\mathcal{X}_0 = \{x \: | \: \mathcal{Y}(x) \neq \emptyset\}$.
We then introduce the stabilizing distributed MPC algorithm in Algorithm \ref{alg:stable_dmpc}.
\begin{algorithm}[H]
	\caption{$\text{Stable distributed MPC, executed each time step}$}\label{alg:stable_dmpc}
	\begin{algorithmic}[1]
		\State Measure local states $x_i, \: \forall i \in \mathcal{M}$
		\If{$x_i \in \mathcal{X}_{\text{T}, i}, \forall i \in \mathcal{M}$}
			\State Apply local inputs $u_i = K_i^{(l)}x_i, x_i \in P_i^{(l)}, \forall i \in \mathcal{M}$
		\Else 
			\If{first time step}
				\State $\hat{\textbf{u}}_i \gets$ known feasible guess, $\forall i \in \mathcal{M}$
			\Else
				\State Construct initial guess from terminal controller and previous solution $\hat{\textbf{u}}_i \gets \Big(\bar{u}_i^\top(1),\dots,\bar{u}_i^\top(N-1), \big(K_i^{(l)}\bar{x}_i(N)\big)^\top\Big)^\top, \bar{x}_i(N) \in P_i^{(l)}, \: \forall i \in \mathcal{M}$ \label{step:feas_guess}
			\EndIf
			\State \begin{color}{black}
				\State $\begin{aligned}\Big\{\textbf{y}_{i, \text{dr}}, F_{i, \text{dr}}\Big\}_{i \in \mathcal{M}} \gets \text{D-rout}\big(\{x_i\}_{i \in \mathcal{M}}, \{\hat{\textbf{u}}_i\}_{i \in \mathcal{M}}\big)\end{aligned}$\label{step:d_rout2}
			\end{color}
			\State Solve $\mathcal{P}_\text{d}(x)$ with Algorithm \ref{alg:switching_ADMM}: $\{\textbf{y}_i\}_{i \in \mathcal{M}} \gets \text{Sw-ADMM}\big(\{x_i\}_{i \in \mathcal{M}}, \{\hat{\textbf{u}}_i\}_{i \in \mathcal{M}}, \big\{\{\textbf{x}_j^{(i)}\}_{j \in \mathcal{N}_i}\big\}_{i \in \mathcal{M}}\big)$ \label{step:solve}
				\If{\begin{color}{black}not $\big(F_i\big( \textbf{x}_i, \textbf{u}_i, \{\textbf{x}_j^{(i)}\}_{j \in \mathcal{N}_i} \big) \leq F_{i, \text{dr}}, \: \forall i \in \mathcal{M}\big)$\end{color}} \label{step:if_not_improve}
				\State \begin{color}{black}$\textbf{y}_i \gets \textbf{y}_{i, \text{dr}}, \: \forall i \in \mathcal{M}$\end{color}
				\EndIf \label{step:end_if}
			\State $\bar{\textbf{x}}_i \gets \textbf{x}_i$ and $\bar{\textbf{u}}_i \gets \textbf{u}_i, \: \forall i \in \mathcal{M}$
			\State Apply local inputs $u_i(0), \: \forall i \in \mathcal{M}$
		\EndIf
	\end{algorithmic}
\end{algorithm}
When all subsystems are in the terminal regions the switching linear feedback control laws are used.
Otherwise, for Algorithm \ref{alg:switching_ADMM}, subsystems have local access to a feasible initial guess through shifting the previous control solution and adding the terminal controller as the final element of the initial feasible guess.
\begin{color}{black}Steps \ref{step:if_not_improve}$-$\ref{step:end_if} provide a verification that the solution generated by the switching ADMM procedure has improved the value with respect to the initial guesses.
The verification is local, as both $F_i\big( \textbf{x}_i, \textbf{u}_i, \{\textbf{x}_j^{(i)}\}_{j \in \mathcal{N}_i} \big)$ and $F_{i, \text{dr}}$ are known by subsystem $i$.
This protects against the edge case where the local minimum found by the procedure is worse than the initialization.\end{color}

We now prove the recursive feasibility and the closed-loop stability of Algorithm \ref{alg:stable_dmpc}.
\begin{theorem} \label{prop:feas}
	\begin{color}{black}Assume Assumptions \ref{as:switching_assumption}$-$\ref{as:initial_guess} hold.\end{color}
	For an initial state $x \in \mathcal{X}_0 \setminus \mathcal{X}_\text{T}$, at time step $t=0$, a weakly feasible set of initial control guesses $\{\hat{\textbf{u}}_{i}\}_{i \in \mathcal{M}}$, and generating control inputs via Algorithm \ref{alg:stable_dmpc}, $\mathcal{P}_\text{d}(x)$ is feasible for all time steps $t \geq 0$ in which $x \notin \mathcal{X}_\text{T}$.
\end{theorem}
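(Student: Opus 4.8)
The plan is to prove recursive feasibility by induction on the closed-loop time step $t$, following the standard dual-mode argument adapted to the distributed, switching setting. I would strengthen the claim to an inductive invariant: for every $t$ with $x(t) \in \mathcal{X}_0 \setminus \mathcal{X}_\text{T}$, the solution $\{\bar{\textbf{x}}_i, \bar{\textbf{u}}_i\}_{i \in \mathcal{M}}$ stored by Algorithm~\ref{alg:stable_dmpc} is feasible for $\mathcal{P}_\text{d}(x(t))$ and, in particular, satisfies the terminal constraint $\bar{x}_i(N) \in \mathcal{X}_{\text{T},i}$ for all $i$. Feasibility of $\mathcal{P}_\text{d}(x(t))$ is then an immediate corollary, and the terminal-set membership is exactly what the shift-and-append construction at $t+1$ requires.

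For the base case $t=0$, the hypothesis $x \in \mathcal{X}_0$ gives $\mathcal{Y}(x) \neq \emptyset$, while the weakly feasible guess of Assumption~\ref{as:initial_guess} generates a global sequence $s$ with $\mathcal{Y}_s(x) \neq \emptyset$; hence by Proposition~\ref{prop:switch_converge} the switching ADMM output is a consensus point lying in $\mathcal{Y}$, so it is feasible and reaches the terminal set. The single nontrivial link in the propagation is the identity $x_i(t+1) = \bar{x}_i(1)$: because the applied input is $u_i(0) = \bar{u}_i(0)$ and the stored solution is a consensus point, the consistency relation \eqref{eq:consenus_point} gives $\bar{x}_j^{(i)}(0) = \bar{x}_j(0) = x_j(t)$, so the true coupled dynamics reproduce the first predicted step exactly, eliminating any prediction mismatch.

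For the inductive step I would take the stored feasible solution at $t$, form the shifted-and-appended guess $\hat{\textbf{u}}_i = (\bar{u}_i^\top(1),\dots,\bar{u}_i^\top(N-1),(K_i^{(l)}\bar{x}_i(N))^\top)^\top$ of step~\ref{step:feas_guess}, and roll it out with D-rout (Algorithm~\ref{alg:dynam_rollout}). Proceeding inductively over the horizon index $k$ and again using consensus of the stored solution, I would show the rolled-out trajectory satisfies $x_i^{\text{new}}(k) = \bar{x}_i(k+1)$ for $k \in \{0,\dots,N-1\}$, so the first $N$ predicted states, the inputs $\bar{u}_i(1),\dots,\bar{u}_i(N-1)$, and the coupling constraints are inherited directly from feasibility at $t$. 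The appended step yields $x_i^{\text{new}}(N) = f_i(\bar{x}_i(N), K_i^{(l)}\bar{x}_i(N), \{\bar{x}_j(N)\}_{j \in \mathcal{N}_i})$; since every $\bar{x}_j(N) \in \mathcal{X}_{\text{T},j}$, Assumption~\ref{as:term} gives $x_i^{\text{new}}(N) \in \mathcal{X}_{\text{T},i}$ and $K_i^{(l)}\bar{x}_i(N) \in \mathcal{U}_i$. Because D-rout communicates the true neighbor states, the consistency constraints hold automatically, so $\{\textbf{y}_{i,\text{dr}}\}$ is feasible for $\mathcal{P}_\text{d}(x(t+1))$, proving $x(t+1) \in \mathcal{X}_0$. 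Feasibility of the stored solution at $t+1$ then follows because the shifted guess is globally, hence weakly, feasible, so Proposition~\ref{prop:switch_converge} makes the Sw-ADMM output a feasible consensus point, and the verification in steps~\ref{step:if_not_improve}--\ref{step:end_if} only ever replaces it with the feasible candidate $\textbf{y}_{i,\text{dr}}$; either branch preserves the invariant and closes the induction.

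The main obstacle is the distributed bookkeeping rather than any single hard inequality: I must verify that the local copies, the averaged consensus variables, and the true coupled dynamics all align so that D-rout, which propagates actual neighbor states, reproduces the tail of the previous prediction, and that the terminal invariance of Assumption~\ref{as:term} is invoked with every neighbor simultaneously inside its own terminal set. A secondary subtlety is the coupling inequality $h_i \le 0$ at the appended terminal step, which is not stated separately in Assumption~\ref{as:term} and must be argued to hold inside $\mathcal{X}_\text{T}$; and at $t=0$ the verification fallback must not overwrite the feasible Sw-ADMM output with a merely weakly feasible rollout, which I would handle by invoking the globally feasible soft-constrained initialization noted after Algorithm~\ref{alg:dmpc}.
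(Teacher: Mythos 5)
Your proposal is correct and follows essentially the same route as the paper's proof: induction on the closed-loop time step, using Proposition \ref{prop:switch_converge} to get a feasible consensus point from a weakly feasible guess, the consensus property to identify $x^+$ with the first predicted state, and the terminal invariance of Assumption \ref{as:term} to make the shifted-and-appended guess globally (hence weakly) feasible at $t+1$. You are in fact somewhat more careful than the paper in tracking the fallback in steps \ref{step:if_not_improve}--\ref{step:end_if} and the coupling constraint at the appended terminal step, which the paper's proof passes over silently.
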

\begin{proof}
	The proof works by induction.
	Assume that, for state $x \notin \mathcal{X}_\text{T}$, at time step $t$, $\mathcal{P}_\text{d}(x)$ is feasible, and that subsystems have weakly feasible initial control guesses $\hat{\textbf{u}}_i, \forall i \in \mathcal{M}$.
	By Proposition \ref{prop:switch_converge}, with weakly feasible initial guesses, the outputs $\textbf{y}_i = (\Tilde{\textbf{x}}_i^\top, \textbf{u}_i^\top)^\top$ of Algorithm \ref{alg:switching_ADMM} are feasible for $\mathcal{P}_\text{d}(x)$, and therefore $x_i(N) \in \mathcal{X}_{\text{T}, i}, \forall i \in \mathcal{M}$.
	The subsystems apply local inputs $u_i(0)$, propagating the state dynamics as $x_{i}^+ = f_i\big(x_{i}, u_i(0), \{x_{j}\}_{j \in \mathcal{N}_i}\big)$ and, as by Proposition \ref{prop:switch_converge} $\textbf{x}_j^{(i)} = \textbf{x}_j, \forall j \in \mathcal{N}_i$, the new states coincide with the first predicted state, i.e., $x_{i}^+ = x_i(1)$.
	\begin{color}{black}At the next time step $t+1$, in state $x^+$, the initial guesses are  $\hat{\textbf{u}}_i = \Big({u}_i^\top(1),\dots,{u}_i^\top(N-1), \big(K_i^{(l)}{x}_i(N)\big)^\top\Big)^\top, {x}_i(N) \in P_i^{(l)} \cap \mathcal{X}_{\text{T}, i}, \: i \in \mathcal{M}$, and are globally feasible, and therefore also weakly feasible, as $\mathcal{X}_{\text{T}, i}$ is forward invariant and robust to coupling under the terminal controllers when $x_j \in \mathcal{X}_{\text{T}, j}, \forall j \in \mathcal{N}_i$.\end{color}
	Therefore $\mathcal{P}_\text{d}(x^+)$ is feasible at time step $t+1$.
	
	Now, for time step $t = 0$ and initial state $x \in \mathcal{X}_0 \setminus \mathcal{X}_\text{T}$, $\mathcal{P}_\text{d}(x)$ is feasible as $x \in \mathcal{X}_0$, and the initial guesses $\{\hat{\textbf{u}}_{i}\}_{i \in \mathcal{M}}$ for Algorithm \ref{alg:switching_ADMM} are weakly feasible.
	It follows then by induction that $\mathcal{P}_\text{d}(x)$ is feasible for all time steps $t > 0$ in which $x \notin \mathcal{X}_\text{T}$.
\end{proof}
\begin{remark}
	Theorem \ref{prop:feas} considers only states $x \notin \mathcal{X}_\text{T}$ as $\mathcal{P}_\text{d}$ is not relevant in Algorithm \ref{alg:stable_dmpc} when $x \in \mathcal{X}_{\text{T}}$.
\end{remark}
\begin{color}{black}
\begin{lemma}\label{lemma:2}
	Assume Assumptions \ref{as:switching_assumption}$-$\ref{as:initial_guess} hold.
	For all $x \in \mathcal{X}_0 \setminus \mathcal{X}_\text{T}$ and time steps $t > 0$, generating control inputs via Algorithm \ref{alg:stable_dmpc} we have
	\begin{equation}
		V(x, \textbf{u}) \leq V(x, \hat{\textbf{u}}),
	\end{equation}
	where $\hat{\textbf{u}} = (\hat{\textbf{u}}_1^\top,\dots,\hat{\textbf{u}}_M^\top)^\top$ is the initial guess for Algorithm \ref{alg:switching_ADMM} (step \ref{step:solve} of Algorithm \ref{alg:stable_dmpc}), and $\textbf{u}$ is the resulting global control input sequence (step \ref{step:end_if} of Algorithm \ref{alg:stable_dmpc}), with $V$ defined in \eqref{eq:cost}.
\end{lemma}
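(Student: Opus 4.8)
The plan is to exploit the explicit case split in steps \ref{step:if_not_improve}--\ref{step:end_if} of Algorithm \ref{alg:stable_dmpc}, and to relate the locally computed quantities $F_{i,\text{dr}}$ and $F_i(\textbf{x}_i,\textbf{u}_i,\{\textbf{x}_j^{(i)}\}_{j \in \mathcal{N}_i})$ to the global value $V$ defined in \eqref{eq:cost}. The central observation is that both the rollout in Algorithm \ref{alg:dynam_rollout} and the consensus point returned by Algorithm \ref{alg:switching_ADMM} make the state copies equal to the true states, so that a sum of per-subsystem costs coincides with the centralized value along a feasible trajectory.

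First I would establish that $V(x,\hat{\textbf{u}}) = \sum_{i \in \mathcal{M}} F_{i,\text{dr}}$. In Algorithm \ref{alg:dynam_rollout} each subsystem rolls out the true dynamics \eqref{eq:dynamics} under $\hat{\textbf{u}}_i$ while communicating actual neighbor states, so that $x_j^{(i)}(k) = x_j(k)$ holds exactly; hence $F_{i,\text{dr}}$ is precisely the contribution of subsystem $i$ to the global cost evaluated along the trajectory generated by applying $\hat{\textbf{u}}$. For $t>0$ the guess $\hat{\textbf{u}}$ is globally feasible, by the construction in step \ref{step:feas_guess} together with the argument in the proof of Theorem \ref{prop:feas}, so $V(x,\hat{\textbf{u}})$ is finite and equals $\sum_i F_{i,\text{dr}}$ rather than $\infty$.

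Then I would split into the two branches of the conditional. In the branch where the solution is retained, the defining inequality is $F_i(\textbf{x}_i,\textbf{u}_i,\{\textbf{x}_j^{(i)}\}_{j \in \mathcal{N}_i}) \leq F_{i,\text{dr}}$ for all $i \in \mathcal{M}$. By Proposition \ref{prop:switch_converge} the output of Algorithm \ref{alg:switching_ADMM} is a consensus point, hence $\textbf{y} \in \mathcal{Y}$ and the consistency constraints \eqref{eq:consistency_constraint} hold ($\textbf{x}_j^{(i)} = \textbf{x}_j$); consequently applying $\textbf{u}$ to the true dynamics reproduces the solution trajectories, the resulting point is feasible, and $V(x,\textbf{u}) = \sum_i F_i(\textbf{x}_i,\textbf{u}_i,\{\textbf{x}_j^{(i)}\}_{j \in \mathcal{N}_i}) \leq \sum_i F_{i,\text{dr}} = V(x,\hat{\textbf{u}})$. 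In the other branch each subsystem resets $\textbf{y}_i \gets \textbf{y}_{i,\text{dr}}$, so the applied control sequence is exactly $\hat{\textbf{u}}$ and $V(x,\textbf{u}) = V(x,\hat{\textbf{u}})$ holds trivially. Combining the two branches yields the claim.

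The main obstacle is the bookkeeping that ties the distributed, per-subsystem costs to the centralized value $V$, rather than any deep estimate. The inequality obtained from the conditional is an inequality between sums of \emph{local} costs evaluated at possibly disagreeing copies, and it transfers to an inequality between \emph{global} values only because consensus (for the ADMM output) and exact communication of neighbor states (for the rollout) both force the copies to equal the true states. Equal care is needed to confirm that $V$ is finite in each case, which follows from $\textbf{y} \in \mathcal{Y}$ via Proposition \ref{prop:switch_converge} in the retained branch, and from global feasibility of $\hat{\textbf{u}}$ at $t>0$ in the fallback branch; this is exactly why the statement is restricted to $t>0$.
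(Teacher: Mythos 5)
Your proof is correct and follows essentially the same route as the paper's: establish $\sum_{i}F_{i,\text{dr}} = V(x,\hat{\textbf{u}})$ with finiteness from global feasibility at $t>0$, split on the conditional in steps \ref{step:if_not_improve}--\ref{step:end_if}, and use the consensus property from Proposition \ref{prop:switch_converge} to identify the sum of local costs with the global value in the retained branch. Your write-up is somewhat more explicit than the paper's about why the rollout costs sum to the centralized value, but there is no substantive difference in the argument.
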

\begin{proof}
For time steps $t > 0$, $\hat{\textbf{u}}$ is globally feasible, i.e., $V(x, \hat{\textbf{u}}) < \infty$.
Observe that $\sum_{i \in \mathcal{M}} F_{i, \text{dr}} = V(x, \hat{\textbf{u}})$.
If there exists $i \in \mathcal{M}$ such that $F_i\big( \textbf{x}_i, \textbf{u}_i, \{\textbf{x}_j^{(i)}\}_{j \in \mathcal{N}_i} \big) > F_{i, \text{dr}}$ in step \ref{step:if_not_improve}, then the input $\textbf{u}$ is set to $\hat{\textbf{u}}$, and $V(x, \textbf{u}) = V(x, \hat{\textbf{u}})$.
If $F_i\big( \textbf{x}_i, \textbf{u}_i, \{\textbf{x}_j^{(i)}\}_{j \in \mathcal{N}_i} \big) \leq F_{i, \text{dr}}, \: \forall i \in \mathcal{M}$ in step \ref{step:if_not_improve}, then $\sum_{i \in \mathcal{M}} F_i\big( \textbf{x}_i, \textbf{u}_i, \{\textbf{x}_j^{(i)}\}_{j \in \mathcal{N}_i} \big) \leq V(x, \hat{\textbf{u}})$.
By Proposition \ref{prop:switch_converge} $\textbf{x}_j^{(i)} = \textbf{x}_j, \forall j \in \mathcal{N}_i$, and therefore the summation is equal to $V(x, \textbf{u})$, i.e., $V(x, \textbf{u}) \leq V(x, \hat{\textbf{u}})$.
\end{proof}\end{color}
\begin{theorem}
	\begin{color}{black}Assume Assumptions \ref{as:switching_assumption}$-$\ref{as:initial_guess} hold.\end{color}
	The control law generated by Algorithm \ref{alg:stable_dmpc} is exponentially stabilizing with a region of attraction $\mathcal{X}_0$.
\end{theorem}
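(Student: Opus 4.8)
The plan is to exhibit the closed-loop cost as a Lyapunov function and verify the two standard quadratic bounds together with a strict decrease, from which exponential stability on $\mathcal{X}_0$ follows by a routine Lyapunov argument. Concretely, define the candidate $W(x) = V(x, \textbf{u})$, where $\textbf{u}$ is the global control sequence actually applied by Algorithm \ref{alg:stable_dmpc} at state $x$ (and, for $x \in \mathcal{X}_\text{T}$, where only the terminal controllers act, set $W(x) = V_\text{f}(x) = \sum_{i \in \mathcal{M}} x_i^\top \Phi_i x_i$). By Theorem \ref{prop:feas} the applied sequence is globally feasible for every $x \in \mathcal{X}_0 \setminus \mathcal{X}_\text{T}$, so $W$ is finite-valued on $\mathcal{X}_0$, and by recursive feasibility $\mathcal{X}_0$ is positively invariant under the closed loop. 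The lower bound is then immediate: since $V(x,\textbf{u})$ contains the first-stage cost evaluated at $x$ and $Q_i \succ 0$ by Assumption \ref{as:lypanov}, we obtain $W(x) \ge \underline{\alpha}\|x\|^2$ (using positive definiteness of the summed first-stage cost), with the same bound on $\mathcal{X}_\text{T}$ following from $\Phi_i \succ 0$.

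The decrease is the core step and is where Lemma \ref{lemma:2} and Assumption \ref{as:lypanov} enter. For $x_t \notin \mathcal{X}_\text{T}$, the applied input drives the state to $x_{t+1} = x_t(1)$ (using that, by Proposition \ref{prop:switch_converge}, the shared-state copies agree, so the predicted and realized first states coincide), and the warm start at $t+1$ is the shift of $\textbf{u}_t$ with the switching terminal controller appended, which by Theorem \ref{prop:feas} is globally feasible. Telescoping the cost of this warm start against $V(x_t,\textbf{u}_t)$, all interior stage costs cancel and one is left with $V(x_{t+1}, \hat{\textbf{u}}_{t+1}) = V(x_t, \textbf{u}_t) - \ell\big(x_t, u_t(0)\big) + \big[\ell\big(x_t(N), K x_t(N)\big) + V_\text{f}\big(f(x_t(N), K x_t(N))\big) - V_\text{f}\big(x_t(N)\big)\big]$, where $K$ abbreviates the per-subsystem gains $K_i^{(l)}$ and the bracketed terms are bounded above by zero through the terminal Lyapunov inequality \eqref{eq:lyapnuov}, valid because $x_t(N) \in \mathcal{X}_\text{T}$ by the terminal constraint and $\mathcal{X}_\text{T}$ is forward invariant under coupling by Assumption \ref{as:term}. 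Hence $V(x_{t+1}, \hat{\textbf{u}}_{t+1}) \le V(x_t, \textbf{u}_t) - \ell(x_t, u_t(0))$, and Lemma \ref{lemma:2} (the improvement check in steps \ref{step:if_not_improve}--\ref{step:end_if}) gives $W(x_{t+1}) = V(x_{t+1}, \textbf{u}_{t+1}) \le V(x_{t+1}, \hat{\textbf{u}}_{t+1})$, so that $W(x_{t+1}) \le W(x_t) - \underline{\alpha}\|x_t\|^2$. For $x_t \in \mathcal{X}_\text{T}$ the same inequality follows directly from \eqref{eq:lyapnuov}, and the boundary case $x_t \notin \mathcal{X}_\text{T}$, $x_{t+1} \in \mathcal{X}_\text{T}$ is consistent because replacing the MPC cost by $V_\text{f}$ in the definition of $W$ can only lower its value.

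The remaining and most delicate step is the quadratic upper bound $W(x) \le \overline{\alpha}\|x\|^2$, which is what converts the additive decrease into a geometric one. Inside the terminal set this is clear, since $W(x) = V_\text{f}(x) \le \lambda_{\max}(\Phi)\|x\|^2$, and moreover applying the terminal controllers over the whole horizon telescopes, via \eqref{eq:lyapnuov}, to a cost of at most $V_\text{f}(x)$, yielding a quadratic bound on $\mathcal{X}_\text{T}$. The obstacle I expect to dominate the proof is extending a quadratic bound to all of $\mathcal{X}_0$ for the \emph{suboptimal} cost $W$: unlike the optimal value function, $W$ is only bounded above by the warm-start cost, which depends on the closed-loop history rather than on $x$ alone. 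I would address this by invoking controllability of the local pairs (Assumption \ref{as:system}) to secure a quadratic upper bound on the optimal cost near the origin, and then using the strict decrease to argue that the sublevel sets of $W$ are positively invariant and cover $\mathcal{X}_0$, so that the quadratic bound propagates outward with a (possibly enlarged) constant $\overline{\alpha}$.

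With $\underline{\alpha}\|x\|^2 \le W(x) \le \overline{\alpha}\|x\|^2$ and the decrease rewritten as $W(x^+) \le (1 - \underline{\alpha}/\overline{\alpha})\, W(x)$, iterating gives $\|x_t\|^2 \le (\overline{\alpha}/\underline{\alpha})\,(1 - \underline{\alpha}/\overline{\alpha})^t \|x_0\|^2$ for every $x_0 \in \mathcal{X}_0$, which is exactly exponential stability with region of attraction $\mathcal{X}_0$. In summary, the lower bound and the decrease are obtained concretely from Assumption \ref{as:lypanov}, Proposition \ref{prop:switch_converge}, Theorem \ref{prop:feas}, and Lemma \ref{lemma:2}, while the quadratic upper bound for the suboptimal closed-loop cost away from the terminal set is the step requiring the most care.
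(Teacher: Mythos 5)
Your lower bound and your decrease step are sound and essentially reproduce the paper's argument: the telescoping of the shifted-and-appended warm start against $V(x_t,\textbf{u}_t)$, the use of \eqref{eq:lyapnuov} to kill the terminal bracket, and Lemma \ref{lemma:2} to pass from the warm-start cost to the applied cost are exactly the ingredients the paper uses. The genuine gap is the step you yourself flag as "most delicate": the uniform quadratic upper bound $W(x)\le\overline{\alpha}\|x\|^2$ on all of $\mathcal{X}_0$. This does not go through. First, $W(x)=V(x,\textbf{u})$ with $\textbf{u}$ the applied sequence is not a function of $x$ at all --- the applied sequence depends on the warm start, hence on the closed-loop history --- so "sublevel sets of $W$" are not subsets of the state space and cannot be argued positively invariant in the way you propose. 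Second, even for the \emph{optimal} value function of a constrained MPC problem with hard terminal constraints, a quadratic upper bound holds only on a neighborhood of the origin (or on compact sets where the value function is continuous); it fails in general near the boundary of $\mathcal{X}_0$, and $\mathcal{X}_0$ here need not be compact. Controllability of the local pairs $(A_i^{(l)},B_i^{(l)})$ gives you a local bound at best, and "propagating it outward with an enlarged constant" is precisely the step that has no justification for a suboptimal, history-dependent cost. Without the upper bound, your inequality $W(x^+)\le(1-\underline{\alpha}/\overline{\alpha})W(x)$ collapses and no geometric rate is obtained.

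The paper avoids this obstacle entirely by not attempting a single global Lyapunov function. From the decrease $V(x^+,\textbf{u}^+)-V(x,\textbf{u})\le-\sum_i\ell_i(x_i,u_i(0),\{x_j\})$ and the fact that the stage cost is bounded below by a constant $b>0$ for all $x\notin\mathcal{X}_\text{T}$ (since $\mathcal{X}_\text{T}$ contains a neighborhood of the origin), it concludes by contradiction that the state enters $\mathcal{X}_\text{T}$ in finite time; inside $\mathcal{X}_\text{T}$ the switching terminal controllers admit the common quadratic Lyapunov function $\sum_iV_{\text{f},i}$ of Assumption \ref{as:lypanov}, which gives exponential decay under arbitrary switching. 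Finite-time entry plus exponential stability on the (invariant) terminal set is the dual-mode argument of \cite{scokaertSuboptimalModelPredictive1999}, and it is the correct repair for your proof: replace your global quadratic upper bound by the uniform per-step decrease $-b$ outside $\mathcal{X}_\text{T}$ and reserve the quadratic Lyapunov machinery for the terminal region only.
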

\begin{proof}
	\begin{color}{black}Take $\textbf{u} = \{\textbf{u}_i\}_{i \in \mathcal{M}}$ as the global input generated in Algorithm \ref{alg:stable_dmpc} at time step $t$ with global state $x \in \mathcal{X}_0 \setminus \mathcal{X}_{\text{T}}$.
	Furthermore, take $\{x_i(N)\}_{i \in \mathcal{M}}$ as the set of predicted states at time step $t+N$ after applying the control inputs $\textbf{u}$.
	By Proposition \ref{prop:switch_converge} we have $x_i(N) \in \mathcal{X}_\text{T, i}, \forall i \in \mathcal{M}$.
	Take $\hat{\textbf{u}}_i = \Big({u}_i^\top(1),\dots,{u}_i^\top(N-1), \big(K_i^{(l)}{x}_i(N)\big)^\top\Big)^\top, {x}_i(N) \in P_i^{(l)}$ as the globally feasible initial guess at the next time step $t+1$, constructed from $\textbf{u}_i$, $x^+ \in \mathcal{X}_0 \setminus \mathcal{X}_{\text{T}}$ the global state at time step $t+1$ after applying local inputs $u_i(0)$, and $\textbf{u}^+$ as the global input generated in Algorithm \ref{alg:stable_dmpc} at time step $t+1$.
	Consider the values $V({x}^+, \hat{\textbf{u}}) < \infty$ and $V({x}, \textbf{u}) < \infty$.
	As the control sequence $ \hat{\textbf{u}}$ is a shifted version of $\textbf{u}$, the resulting state trajectories will overlap except for the first and last time steps.
	We thus have the relation \cite{mayneConstrainedModelPredictive2000}
	\begin{equation}
		\begin{aligned}
			V&({x}^+, \hat{\textbf{u}}) - V({x}, \textbf{u}) = -\sum_{i \in \mathcal{M}}  \ell_i \big(x_i, u_i(0), \{x_j\}_{j \in \mathcal{N}_i}\big) \\
			&+\sum_{i \in \mathcal{M}} \bigg(V_{\text{f}, i} \Big( f_i\big(x_i(N), K_i^{(l)}{x}_i(N), \{x_j(N)\}_{j \in \mathcal{N}_i}\big) \Big) \\
			& - V_{\text{f}, i}\big(x_i(N)\big) + \ell_i\big(x_i(N), K_i^{(l)}{x}_i(N), \{x_j(N)\}_{j \in \mathcal{N}_i}\big) \bigg).
		\end{aligned}
	\end{equation}
	By Assumption \ref{as:lypanov} the second summation term is non-positive, thus $	V({x}^+, \hat{\textbf{u}}) - V({x}, \textbf{u}) \leq -\sum_{i \in \mathcal{M}}  \ell_i \big(x_i, u_i(0), \{x_j\}_{j \in \mathcal{N}_i}\big)$.
	Further, the conditions of Lemma \ref{lemma:2} hold at time step $t+1$, such that $V(x^+, \textbf{u}^+) \leq V(x^+, \hat{\textbf{u}})$, giving the expression
	\begin{equation}
		\begin{aligned}
			&V(x^+, \textbf{u}^+) - V(x, \textbf{u}) \leq V(x^+, \hat{\textbf{u}}) - V(x, \textbf{u}) \\
			 &\quad \leq -\sum_{i \in \mathcal{M}} \ell_i \big(x_i, u(0), \{x_j\}_{j \in \mathcal{N}_i}\big),
		\end{aligned}
	\end{equation}
	for all $x \in \mathcal{X}_0 \setminus \mathcal{X}_{\text{T}}$.\end{color}
	Assumption \ref{as:lypanov} on $\ell_i$ guarantees the existence of a class-$\mathcal{K}$ function $\beta(\cdot)$ such that 
	\begin{equation}
		\sum_{i \in \mathcal{M}} \ell_i(x_i, u_i, \{x_j\}_{j \in \mathcal{N}_i}) \geq \beta(\|(x, u)\|),
	\end{equation}
	for all $x \notin \mathcal{X}_T$, and for all $u \in \mathcal{U}_1 \times \dots \times \mathcal{U}_M$.
	As $\mathcal{X}_{\text{T}}$ contains the origin there exists $b > 0$ such that $V(x^+, \textbf{u}^+) - V(x, \textbf{u}) \leq -b$ for all $x \notin \mathcal{X}_{\text{T}}$.
	
	\begin{color}{black}For initial state $x \in \mathcal{X}_0\setminus \mathcal{X}_{\text{T}}$, at time step $t = 0$, we have $V\big(x, \textbf{u}\big) < \infty$ by Assumption \ref{as:initial_guess}.
	Let $x^{+\bar{t}}$ and $\textbf{u}^{+\bar{t}}$ denote the global state and global control input sequence at time step $\bar{t}$, with $\bar{t}$ a finite integer such that $(\bar{t}-1)b > V\big(x, \textbf{u}\big)$.
	If $x^{+\bar{t}} \notin \mathcal{X}_\text{T}$, then $V\big(x^+, \textbf{u}^+\big) - V\big(x, \textbf{u}\big) \leq -b$ holds for all time steps $t = 0,\dots,\bar{t}-1$.
	It follows that $V\big(x^{+\bar{t}}, \textbf{u}^{+\bar{t}}\big) \leq V\big(x, \textbf{u}\big) - (\bar{t}-1)b < 0$, which is a contradiction as the value V is non-negative.
	There therefore exists, for any initial state $x \in \mathcal{X}_0\setminus \mathcal{X}_{\text{T}}$, a finite time $\bar{t}$ at which the global state enters the global terminal set $x \in \mathcal{X}_\text{T}$ \cite{scokaertSuboptimalModelPredictive1999}.\end{color}
	
	As all local systems use the terminal controllers $u_i = K_i^{(l)}x_i, x_i \in P_i^{(l)}$ for $x_i \in \mathcal{X}_{\text{T}, i}$, and the local terminal sets are robustly positively invariant under this terminal control, we have $x \in \mathcal{X}_\text{T}$ for all $t \geq \bar{t}$.
	Finally, by Assumption \ref{as:lypanov}, under the switching linear controllers there exists a global common quadratic Lyapunov function for all possible combinations of local PWA regions when the global state $x \in \mathcal{X}_\text{T}$.
	A common quadratic Lyapunov function implies exponential stability under arbitrary switching \cite{linStabilityStabilizabilitySwitched2009}, hence the global system is exponentially stable under the terminal controllers.
	Therefore as, for any initial state $x \in \mathcal{X}_0$, the global state enters the terminal region $\mathcal{X}_\text{T}$ in finite time and is exponentially stable within $\mathcal{X}_\text{T}$, the origin is exponentially stable under Algorithm \ref{alg:stable_dmpc} with regions of attraction $\mathcal{X}_0$.
\end{proof}

\section{Illustrative Examples}
\label{sec:ex}
In this section we provide two examples demonstrating our approach.
The first considers the network from \cite{maDistributedMPCBased2023}, satisfying Assumptions \ref{as:system}$\--$\ref{as:initial_guess}.
This example demonstrates the stabilizing properties of Algorithm \ref{alg:stable_dmpc} and compares the approach to that of \cite{maDistributedMPCBased2023}.
The second example considers the more realistic control challenge of a platoon of vehicles, proposed as a distributed hybrid MPC benchmark in \cite{mallick2024comparison}, and demonstrates the efficacy of Algorithm \ref{alg:dmpc} as a practical control scheme.
All examples are simulated on an 11th Gen Intel laptop with four i7 cores, 3.00GHz clock speed, and 16Gb of RAM.
Source code for the simulations can be found at \url{https://github.com/SamuelMallick/{distributed-mpc-pwa, stable-dmpc-pwa/tree/paper_2024, hybrid-vehicle-platoon/tree/paper-2024}}.{\tiny}
 
\subsection{Stabilizing Example}
In the following we refer to Algorithm \ref{alg:stable_dmpc} as SwA and the approach based on robust controllable sets presented in \cite{maDistributedMPCBased2023} as RCS.
\begin{color}{black}We consider also a centralized MPC controller that solves $\mathcal{P}$ directly as a MIQP, referred to as CENT.\end{color}
Consider the network of three PWA systems, from \cite{maDistributedMPCBased2023}, with identical dynamics defined over $L = 4$ regions, as shown in Fig.  \ref{fig:ex_1_traj}.
The systems' dynamics parameters, for $i = 1,2,3$, are: 
\begin{equation}
	\begin{aligned}
		A_i^{(1)} &= A_i^{(3)} = \begin{bmatrix}
			0.6324 & 0.2785 \\ 0.0975 & 0.5469
		\end{bmatrix} \\ 
		\quad A_i^{(2)} &= A_i^{(4)} = \begin{bmatrix}
			0.6555 & 0.7060 \\ 0.1712 & 0.0318
		\end{bmatrix} \\
		B_i^{(1)} &= B_i^{(2)} = B_i^{(3)} = B_i^{(4)} = \begin{bmatrix}
			1 \\ 0
		\end{bmatrix},
	\end{aligned}
\end{equation}
with state and action constraints $\mathcal{X}_i = \{x_i \big| |x_i|_\infty \leq 20\}$ and $\mathcal{U}_i = \{u_i \big| |u_i| \leq 3\}$, and stage costs $\ell_i = x_i^\top Q x_i + u_i^\top R u_i$ with
\begin{equation}
	Q_i = \begin{bmatrix}
		2 & 0 \\ 0 & 2
	\end{bmatrix}, \quad  R = 0.2.
\end{equation} 
The coupling in the network is 
\begin{equation}
	A_{12} = A_{21} = A_{23} = A_{31} = 2 \cdot 10^{-3} \cdot \begin{bmatrix}
		1 & 0 \\ 0 & 1
	\end{bmatrix},
\end{equation}
with all other coupling matrices zero.
For a fair comparison\footnote{In \cite{maDistributedMPCBased2023} a small disturbance is added to the dynamics and is accounted for in the computation of the robustly controllable sets. The approach of \cite{maDistributedMPCBased2023} requires no extra mechanism to account for the disturbance as the coupling between subsystems is also considered as a local disturbance. For a fair comparison we recompute all the sets required for the RCS approach with no additional disturbance in the dynamics.} we use the same terminal controllers as \cite{maDistributedMPCBased2023}, i.e., for $i = 1,2,3$:
\begin{equation}
	\begin{aligned}
		K_i^{(1)} &= K_i^{(3)} = \begin{bmatrix}
			-0.0544 & -0.1398
		\end{bmatrix} \\
		K_i^{(2)} &= K_i^{(4)} = \begin{bmatrix}
			-0.1544 & -0.0295
		\end{bmatrix},
	\end{aligned}
\end{equation}
and the same robustly positive invariant terminal sets, i.e., for $i = 1,2,3$:
\begin{equation}\label{eq:term_sets}
	\begin{aligned}
		\mathcal{X}_{\text{T}, i} = \Bigg\{x \Bigg| &\begin{bmatrix}
			P \\ -P
		\end{bmatrix}x \leq \gamma \mathbf{1}, \\ &P = \begin{bmatrix}
			7.8514 & 8.1971 \\ 8.1957 & -7.8503
		\end{bmatrix}, \gamma=47\Bigg\}.
	\end{aligned}
\end{equation}
Finally, we compute local terminal costs $V_{\text{f}, i}(x_i) = x_i^\top \Phi_i x_i$, satisfying Assumption \ref{as:lypanov}, using the procedure in Appendix \ref{ap:LMI}, which gives
\begin{equation}
	\begin{aligned}
		\Phi_1 &= \begin{bmatrix}
			12.67 & 8.87 \\ 8.87 & 8.14
		\end{bmatrix} \cdot 10^{-4} + \Phi \\
		\Phi_2 &= \begin{bmatrix}
			10.58 & 7.90 \\ 7.90 & 8.26
		\end{bmatrix} \cdot 10^{-4} + \Phi \\
		\Phi_3 &= \begin{bmatrix}
			8.53 & 5.43 \\ 5.43 & 7.10
		\end{bmatrix} \cdot 10^{-4} + \Phi \\
		\Phi &= \begin{bmatrix}
			3.938 & 1.262 \\ 1.262 & 4.346
		\end{bmatrix}.
	\end{aligned}
\end{equation}
For the SwA approach the horizon is $N = 5$.
Algorithm \ref{alg:switching_ADMM} is run with $T_\text{ADMM}=50$ and $\rho = 0.5$.
These values are hand chosen to ensure the total residual is less than 0.01. 
For this system the edge case of continuous switching is never observed; $T_\text{cut}$ is consequently set to also be 50.
\begin{color}{black}For the weakly feasible guesses in the first time step, the trivial guesses $\hat{\textbf{u}}_i = (0,\dots,0)^\top$ can be used.
These trivial guesses are weakly feasible but not globally feasible, demonstrating the practical satisfaction of Assumption \ref{as:initial_guess}.\end{color}

Fig.  \ref{fig:ex_1_traj} shows the trajectories under each approach for the initial condition, $x_1(0) = \begin{bmatrix}
	-11 & -18
\end{bmatrix}^\top, x_2(0) = \begin{bmatrix}
	2 & -19
\end{bmatrix}^\top, x_3(0) = \begin{bmatrix}
	15 & 19
\end{bmatrix}^\top$.
Both approaches drive the states to the origin.
\begin{color}{black}Fig. \ref{fig:ex_1_cost} shows, for 100 randomly sampled initial conditions, the relative performance drop with respect to the centralized controller $J_o/J_\text{CENT}, o \in \{\text{SwA}, \text{RCS}\}$ with
\begin{equation}\label{eq:closed_loop_cost}
	J_o = \sum_{t = 0}^{30}\sum_{i = 1}^3 \ell_i\big(x_{i}(t), u_{i}(t), \{x_{j}(t)\}_{j \in \mathcal{N}_i}\big).
\end{equation}
The SwA approach generally performs slightly better, in terms of \eqref{eq:closed_loop_cost}, than the RCS approach, with occasional outlier initial conditions for which the performance improvement is much more significant\footnote{\begin{color}{black} While unlikely in practical problems, in theory Algorithm \ref{alg:switching_ADMM} can converge to an arbitrarily bad local minimum. This is explored with randomized systems in Appendix \ref{sec:ap3}.\end{color}}.
There is one outlier in which the RCS approach performs better, even outperforming the centralized controller, which is indeed possible with a finite prediction horizon.\end{color}

\begin{color}{black}Fig. \ref{fig:ex_1_time} compares the online computation time required over all simulations for each approach using a range of solvers.
The SwA approach allows the use of quadratic programming solvers, while the RCS approach requires MIQP solvers.
The use of quadratic programming solvers allows the SwA approach to be much faster than the RCS approach, despite its iterative nature.\end{color}
\begin{color}{black}The computation times for the centralized controller, solved using Gurobi \cite{gurobi}, are in the order of tens of seconds, and are not included in Fig. \ref{fig:ex_1_time} for clarity.\end{color}
\begin{figure}
	\centering
	\input{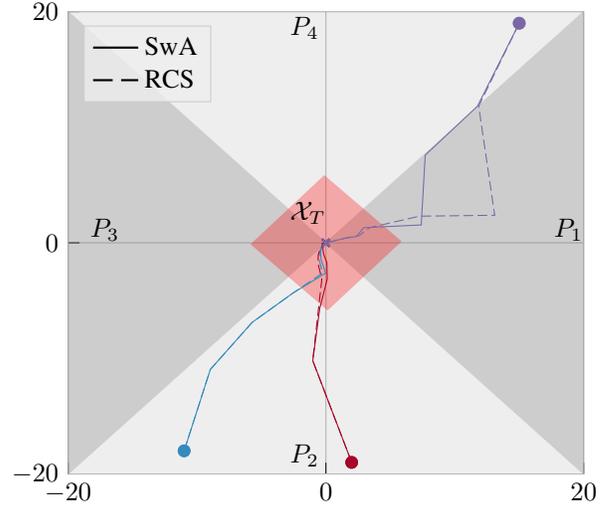}
	\caption{State trajectories for SwA and RCS with weak coupling.}
	\label{fig:ex_1_traj}
\end{figure}
\begin{figure*}
	\color{black}
	\centering
	\input{media/tikz/ex_1_cost}
	\caption{Performance drop with respect to the centralized controller for 100 different initial conditions.}
	\label{fig:ex_1_cost}
\end{figure*}
\begin{figure}
	\color{black}
	\centering
\begin{tikzpicture}
	
\newcommand{\myH}{\axisdefaultheight}
\newcommand{\myW}{0.6*\axisdefaultwidth}

\definecolor{darkgray178}{RGB}{178,178,178}
\definecolor{firebrick166640}{RGB}{166,6,40}
\definecolor{silver188}{RGB}{188,188,188}
\definecolor{whitesmoke238}{RGB}{238,238,238}

\begin{groupplot}[group style={group size=2 by 1, horizontal sep=0.1cm}]
\nextgroupplot[
height=\myH,
width=\myW,
axis background/.style={fill=whitesmoke238},
axis line style={silver188},
log basis y={10},
scaled y ticks=manual:{}{\pgfmathparse{#1}},
tick pos=left,
tick scale binop=\times,
title={Sw-ADMM},
x grid style={darkgray178},
xmajorgrids,
xmin=0.5, xmax=4.5,
xtick style={color=black},
xtick={1,2,3,4},
xticklabels={gurobi,{\raisebox{-0.5cm}{osqp}},qpoases,\raisebox{-0.5cm}{qrqp}},
y grid style={darkgray178},
ylabel={Time (s)},
ymajorgrids,
ymin=0.00139591318955465, ymax=0.311899173070285,
ymode=log,
ytick style={color=black},
ytick={0.0001,0.001,0.01,0.1,1,10},
yticklabels={
  \(\displaystyle {10^{-4}}\),
  \(\displaystyle {10^{-3}}\),
  \(\displaystyle {10^{-2}}\),
  \(\displaystyle {10^{-1}}\),
  \(\displaystyle {10^{0}}\),
  \(\displaystyle {10^{1}}\)
}
]
\addplot [black]
table {%
0.775 0.08476425
1.225 0.08476425
1.225 0.1156595
0.775 0.1156595
0.775 0.08476425
};
\addplot [black]
table {%
1 0.08476425
1 0.074862
};
\addplot [black]
table {%
1 0.1156595
1 0.161125
};
\addplot [black]
table {%
0.8875 0.074862
1.1125 0.074862
};
\addplot [black]
table {%
0.8875 0.161125
1.1125 0.161125
};
\addplot [black]
table {%
1.775 0.003999
2.225 0.003999
2.225 0.006026
1.775 0.006026
1.775 0.003999
};
\addplot [black]
table {%
2 0.003999
2 0.003083
};
\addplot [black]
table {%
2 0.006026
2 0.008829
};
\addplot [black]
table {%
1.8875 0.003083
2.1125 0.003083
};
\addplot [black]
table {%
1.8875 0.008829
2.1125 0.008829
};
\addplot [black, mark=o, mark size=1, mark options={solid,fill opacity=0}, only marks]
table {%
2 0.009468
2 0.010171
2 0.009928
2 0.009681
2 0.009332
2 0.013571
2 0.010038
2 0.010674
2 0.009824
2 0.016375
2 0.010781
2 0.009333
};
\addplot [black]
table {%
2.775 0.0053435
3.225 0.0053435
3.225 0.01149875
2.775 0.01149875
2.775 0.0053435
};
\addplot [black]
table {%
3 0.0053435
3 0.002317
};
\addplot [black]
table {%
3 0.01149875
3 0.020647
};
\addplot [black]
table {%
2.8875 0.002317
3.1125 0.002317
};
\addplot [black]
table {%
2.8875 0.020647
3.1125 0.020647
};
\addplot [black, mark=o, mark size=1, mark options={solid,fill opacity=0}, only marks]
table {%
3 0.021752
3 0.039624
};
\addplot [black]
table {%
3.775 0.00273025
4.225 0.00273025
4.225 0.004035
3.775 0.004035
3.775 0.00273025
};
\addplot [black]
table {%
4 0.00273025
4 0.001785
};
\addplot [black]
table {%
4 0.004035
4 0.00589
};
\addplot [black]
table {%
3.8875 0.001785
4.1125 0.001785
};
\addplot [black]
table {%
3.8875 0.00589
4.1125 0.00589
};
\addplot [black, mark=o, mark size=1, mark options={solid,fill opacity=0}, only marks]
table {%
4 0.006456
4 0.007118
4 0.006901
4 0.006034
4 0.006002
};
\addplot [firebrick166640]
table {%
0.775 0.092507
1.225 0.092507
};
\addplot [firebrick166640]
table {%
1.775 0.004781
2.225 0.004781
};
\addplot [firebrick166640]
table {%
2.775 0.0084735
3.225 0.0084735
};
\addplot [firebrick166640]
table {%
3.775 0.0033085
4.225 0.0033085
};

\nextgroupplot[
height=\myH,
width=\myW,
axis background/.style={fill=whitesmoke238},
axis line style={silver188},
log basis y={10},
scaled y ticks=manual:{}{\pgfmathparse{#1}},
tick pos=left,
tick scale binop=\times,
title={RC-Sets},
x grid style={darkgray178},
xmajorgrids,
xmin=0.5, xmax=3.5,
xtick style={color=black},
xtick={1,2,3},
xticklabels={cplex,mosek,gurobi},
y grid style={darkgray178},
ymajorgrids,
ymin=0.00139591318955465, ymax=0.311899173070285,
ymode=log,
ytick style={color=black},
yticklabels={}
]
\addplot [black]
table {%
0.85 0.06716955
1.15 0.06716955
1.15 0.08182665
0.85 0.08182665
0.85 0.06716955
};
\addplot [black]
table {%
1 0.06716955
1 0.0560936
};
\addplot [black]
table {%
1 0.08182665
1 0.1029527
};
\addplot [black]
table {%
0.925 0.0560936
1.075 0.0560936
};
\addplot [black]
table {%
0.925 0.1029527
1.075 0.1029527
};
\addplot [black, mark=o, mark size=1, mark options={solid,fill opacity=0}, only marks]
table {%
1 0.1100158
1 0.1219042
1 0.1299543
1 0.1083038
1 0.1082172
1 0.1260135
1 0.1048585
1 0.1230711
1 0.1178341
};
\addplot [black]
table {%
1.85 0.0341633
2.15 0.0341633
2.15 0.0497338
1.85 0.0497338
1.85 0.0341633
};
\addplot [black]
table {%
2 0.0341633
2 0.022084
};
\addplot [black]
table {%
2 0.0497338
2 0.072783
};
\addplot [black]
table {%
1.925 0.022084
2.075 0.022084
};
\addplot [black]
table {%
1.925 0.072783
2.075 0.072783
};
\addplot [black, mark=o, mark size=1, mark options={solid,fill opacity=0}, only marks]
table {%
2 0.1174396
2 0.1389995
2 0.1090815
2 0.1297832
2 0.094654
2 0.1192172
2 0.1354561
2 0.0969842
2 0.0905622
2 0.2439127
2 0.1154303
2 0.1040888
2 0.0806144
2 0.1370983
2 0.09039
2 0.1172262
2 0.1109376
2 0.1821959
2 0.073662
2 0.1402196
2 0.1124699
2 0.1118732
2 0.0764175
2 0.1102901
};
\addplot [black]
table {%
2.85 0.0118859
3.15 0.0118859
3.15 0.0148233
2.85 0.0148233
2.85 0.0118859
};
\addplot [black]
table {%
3 0.0118859
3 0.0107376
};
\addplot [black]
table {%
3 0.0148233
3 0.019152
};
\addplot [black]
table {%
2.925 0.0107376
3.075 0.0107376
};
\addplot [black]
table {%
2.925 0.019152
3.075 0.019152
};
\addplot [black, mark=o, mark size=1, mark options={solid,fill opacity=0}, only marks]
table {%
3 0.0198125
3 0.0220378
3 0.0208711
3 0.0334278
3 0.025432
3 0.0228379
3 0.0234078
3 0.023032
3 0.0237699
3 0.2357852
3 0.020405
3 0.0237513
3 0.0196736
3 0.0215784
3 0.0222165
3 0.0214435
3 0.0201438
3 0.0202859
3 0.0210419
3 0.0262732
3 0.0195017
};
\addplot [firebrick166640]
table {%
0.85 0.073148
1.15 0.073148
};
\addplot [firebrick166640]
table {%
1.85 0.0410984
2.15 0.0410984
};
\addplot [firebrick166640]
table {%
2.85 0.0127671
3.15 0.0127671
};
\end{groupplot}

\end{tikzpicture}
	\caption{Computation times with different solvers over 100 different initial conditions. The quadratic programming solvers are \textit{osqp} \cite{osqp}, \textit{qpOASES} \cite{Ferreau2014}, and \textit{qrqp} \cite{Andersson2019}. The MIQP solvers are \textit{CPLEX} \cite{cplex2009v12}, \textit{Mosek} \cite{aps2019mosek}, and \textit{Gurobi} \cite{gurobi}.}
	\label{fig:ex_1_time}
\end{figure}

Now consider the same network, with the same terminal controllers, but with significantly higher coupling:
\begin{equation}
	\label{eq:strong_coupling}
	A_{12} = A_{21} = A_{23} = A_{31} = \begin{bmatrix}
		0.16 & 0 \\ 0 & 0.16
	\end{bmatrix}.
\end{equation}
The uncontrolled global system is now unstable.
Under this coupling there do not exist local terminal sets that are robust to the coupling effects over the entire neighboring state spaces, and the RCS approach is hence not applicable.
There do, however, exist terminal sets satisfying Assumption \ref{as:term}.
Using the procedure in Appendix \ref{ap:sets}, initialized with the sets in \eqref{eq:term_sets}, we determine that the terminal sets in \eqref{eq:term_sets} satisfy Assumption \ref{as:term} under the stronger coupling \eqref{eq:strong_coupling}.
Using the procedure in Appendix \ref{ap:LMI} the terminal costs $V_{\text{f}, i}(x_i) = x_i^\top \Phi_i x_i$ are computed as
\begin{equation}
	\begin{aligned}
		\Phi_1 &= \begin{bmatrix}
			40.98 & 28.29 \\ 28.29 & 43.73
		\end{bmatrix} 
		\Phi_2 = \begin{bmatrix}
			32.07 & 20.90 \\ 20.90 & 35.91
		\end{bmatrix} \\
		&\quad\quad\quad \Phi_3 = \begin{bmatrix}
			31.97 & 20.83 \\ 20.83 & 35.07
		\end{bmatrix}.
	\end{aligned}
\end{equation}
For the system with the increased coupling \eqref{eq:strong_coupling} the continuous switching case is observed to occur rarely.
Algorithm \ref{alg:switching_ADMM} is hence run with $T_\text{ADMM}=75$, $T_\text{cut}=50$, and $\rho = 5$, with these values again hand chosen to ensure that the total residual is less than 0.01.
Fig.  \ref{fig:ex_1_unstab} shows a closed-loop trajectory under this stronger coupling, demonstrating how our approach can provide stabilizing control for stronger coupling, due to its explicit handling of shared states and consequently less strict requirements on the terminal sets.
Furthermore, Fig. \ref{fig:ex_1_u} demonstrates the convergence of Algorithm \ref{alg:switching_ADMM} in the third time step when the continuous switching edge case occurs.
Subsystem 1 continuously switches its local switching sequence, causing the local input and global residual to not converge.
For iterations larger than $T_\text{cut}=50$ the local switching sequence does not change, and the local input and global residual converge.

\begin{figure}
	\centering
	\input{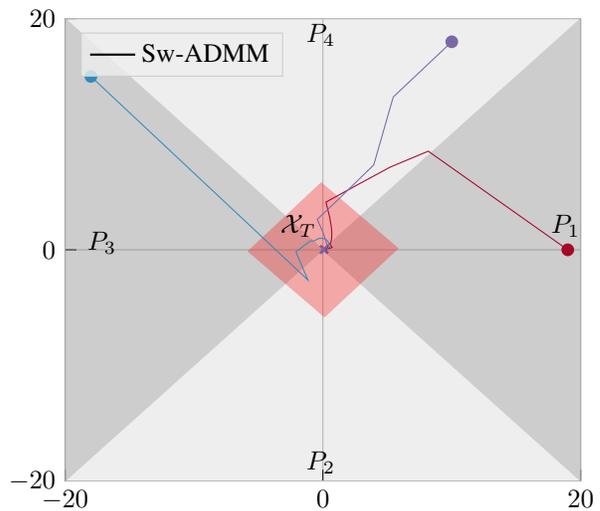}
	\caption{State trajectories for SwA with strong coupling.}
	\label{fig:ex_1_unstab}
\end{figure}
\begin{figure}
	\centering
	\subfloat[Control inputs. Red dots indicate when a local switch of $s_i$ occurred.]{
\begin{tikzpicture}

\definecolor{darkgray178}{RGB}{178,178,178}
\definecolor{firebrick166640}{RGB}{166,6,40}
\definecolor{silver188}{RGB}{188,188,188}
\definecolor{steelblue52138189}{RGB}{52,138,189}
\definecolor{whitesmoke238}{RGB}{238,238,238}

\newcommand{\ms}{1}

\begin{groupplot}[group style={group size=1 by 3, vertical sep = 0.15cm}]
\nextgroupplot[
height=0.7*\axisdefaultheight,
width=\axisdefaultwidth,
axis background/.style={fill=whitesmoke238},
axis line style={silver188},
scaled x ticks=manual:{}{\pgfmathparse{#1}},
tick pos=left,
tick scale binop=\times,
x grid style={darkgray178},
xmajorgrids,
xmin=-3.7, xmax=77.7,
xtick style={color=black},
xticklabels={},
y grid style={darkgray178},
ylabel={\(\displaystyle u_1(0)\)},
ymajorgrids,
ymin=-1.93770991191327, ymax=-0.267168887700133,
ytick style={color=black},
ytick={-2,-1.5,-1,-0.5,0},
yticklabels={
  \(\displaystyle {\ensuremath{-}2.0}\),
  \(\displaystyle {\ensuremath{-}1.5}\),
  \(\displaystyle {\ensuremath{-}1.0}\),
  \(\displaystyle {\ensuremath{-}0.5}\),
  \(\displaystyle {0.0}\)
}
]
\addplot [thick, steelblue52138189]
table {%
0 -1.86177622899449
1 -1.04872554275676
2 -0.545626564376786
3 -0.35485144833847
4 -0.343102570618912
5 -0.376587360196417
6 -0.408182965537409
7 -0.636224147388266
8 -0.586656154672762
9 -0.622157400488667
10 -0.402968337575442
11 -0.550134765118195
12 -0.810682127264677
13 -0.477612806246249
14 -0.556342117185935
15 -0.800902113709557
16 -0.460138497826109
17 -0.523330394285193
18 -0.765187152953821
19 -0.433576709870014
20 -0.506267330257792
21 -0.750738947604487
22 -0.427491271536545
23 -0.50393027690959
24 -0.750818497131783
25 -0.424162379388705
26 -0.502392610200042
27 -0.751895712966066
28 -0.423865481358576
29 -0.502184536770164
30 -0.751710797144408
31 -0.425169024227244
32 -0.502653504311454
33 -0.751243075014677
34 -0.425112352945034
35 -0.502523916957569
36 -0.751051307951836
37 -0.424487092132897
38 -0.502228917827501
39 -0.751070014707474
40 -0.424446931688714
41 -0.50224546313934
42 -0.751124340337084
43 -0.424632424588764
44 -0.502333898876565
45 -0.751127279559605
46 -0.42463746024125
47 -0.502321830614931
48 -0.751109383508003
49 -0.424583329543916
50 -0.491616335422313
51 -0.495068632720185
52 -0.477501652838087
53 -0.456723562461612
54 -0.438581515502822
55 -0.424093710893114
56 -0.41287986323968
57 -0.404399888821877
58 -0.398182889835381
59 -0.393782189011357
60 -0.390755050960861
61 -0.388699243202356
62 -0.387295262184295
63 -0.386319689251695
64 -0.385630382200115
65 -0.385140525088578
66 -0.384795745818483
67 -0.384559378572008
68 -0.384404581182108
69 -0.384310305195043
70 -0.384259127247019
71 -0.384236198539108
72 -0.384229065431508
73 -0.384228023798954
74 -0.384226567967704
};
\addplot [thick, firebrick166640, mark=*, mark size=\ms, mark options={solid}, only marks]
table {%
1 -1.04872554275676
7 -0.636224147388266
10 -0.402968337575442
11 -0.550134765118195
12 -0.810682127264677
13 -0.477612806246249
14 -0.556342117185935
15 -0.800902113709557
16 -0.460138497826109
17 -0.523330394285193
18 -0.765187152953821
19 -0.433576709870014
20 -0.506267330257792
21 -0.750738947604487
22 -0.427491271536545
23 -0.50393027690959
24 -0.750818497131783
25 -0.424162379388705
26 -0.502392610200042
27 -0.751895712966066
28 -0.423865481358576
29 -0.502184536770164
30 -0.751710797144408
31 -0.425169024227244
32 -0.502653504311454
33 -0.751243075014677
34 -0.425112352945034
35 -0.502523916957569
36 -0.751051307951836
37 -0.424487092132897
38 -0.502228917827501
39 -0.751070014707474
40 -0.424446931688714
41 -0.50224546313934
42 -0.751124340337084
43 -0.424632424588764
44 -0.502333898876565
45 -0.751127279559605
46 -0.42463746024125
47 -0.502321830614931
48 -0.751109383508003
49 -0.424583329543916
};

\nextgroupplot[
height=0.7*\axisdefaultheight,
width=\axisdefaultwidth,
axis background/.style={fill=whitesmoke238},
axis line style={silver188},
scaled x ticks=manual:{}{\pgfmathparse{#1}},
tick pos=left,
tick scale binop=\times,
x grid style={darkgray178},
xmajorgrids,
xmin=-3.7, xmax=77.7,
xtick style={color=black},
xticklabels={},
y grid style={darkgray178},
ylabel={\(\displaystyle u_2(0)\)},
ymajorgrids,
ymin=-3.00000000988304, ymax=-3.00000000943521,
ytick style={color=black},
ytick={-3},
yticklabels={
  \(\displaystyle {\ensuremath{-}3.0}\),
}
]
\addplot [thick, steelblue52138189]
table {%
0 -3.00000000972447
1 -3.00000000945557
2 -3.00000000958765
3 -3.00000000973376
4 -3.00000000979906
5 -3.0000000098364
6 -3.00000000985566
7 -3.00000000986151
8 -3.00000000986268
9 -3.00000000985954
10 -3.00000000985544
11 -3.00000000985273
12 -3.00000000984664
13 -3.00000000984694
14 -3.0000000098477
15 -3.0000000098442
16 -3.0000000098468
17 -3.00000000984931
18 -3.00000000984703
19 -3.0000000098496
20 -3.00000000985124
21 -3.00000000984784
22 -3.00000000984922
23 -3.00000000985016
24 -3.00000000984653
25 -3.00000000984822
26 -3.00000000984973
27 -3.00000000984662
28 -3.00000000984869
29 -3.00000000985032
30 -3.00000000984714
31 -3.00000000984899
32 -3.00000000985036
33 -3.00000000984699
34 -3.00000000984875
35 -3.00000000985013
36 -3.00000000984682
37 -3.00000000984868
38 -3.00000000985015
39 -3.0000000098469
40 -3.00000000984877
41 -3.00000000985022
42 -3.00000000984694
43 -3.00000000984878
44 -3.00000000985021
45 -3.00000000984691
46 -3.00000000984875
47 -3.00000000985018
48 -3.0000000098469
49 -3.00000000984875
50 -3.00000000985019
51 -3.00000000985103
52 -3.00000000985145
53 -3.00000000985154
54 -3.00000000985144
55 -3.00000000985127
56 -3.00000000985111
57 -3.00000000985099
58 -3.00000000985091
59 -3.00000000985086
60 -3.00000000985083
61 -3.00000000985083
62 -3.00000000985084
63 -3.00000000985087
64 -3.00000000985089
65 -3.00000000985092
66 -3.00000000985093
67 -3.00000000985094
68 -3.00000000985093
69 -3.00000000985092
70 -3.00000000985091
71 -3.00000000985091
72 -3.0000000098509
73 -3.0000000098509
74 -3.0000000098509
};

\nextgroupplot[
height=0.7*\axisdefaultheight,
width=\axisdefaultwidth,
axis background/.style={fill=whitesmoke238},
axis line style={silver188},
tick pos=left,
tick scale binop=\times,
x grid style={darkgray178},
xlabel={\(\displaystyle \tau\)},
xmajorgrids,
xmin=-3.7, xmax=77.7,
xtick style={color=black},
xtick={-20,0,20,40,60,80},
xticklabels={
  \(\displaystyle {\ensuremath{-}20}\),
  \(\displaystyle {0}\),
  \(\displaystyle {20}\),
  \(\displaystyle {40}\),
  \(\displaystyle {60}\),
  \(\displaystyle {80}\)
},
y grid style={darkgray178},
ylabel={\(\displaystyle u_3(0)\)},
ymajorgrids,
ymin=-3.02180121132501, ymax=-2.54217477601347,
ytick style={color=black},
ytick={-3.25,-3,-2.75,-2.5},
yticklabels={
  \(\displaystyle {\ensuremath{-}3.25}\),
  \(\displaystyle {\ensuremath{-}3.00}\),
  \(\displaystyle {\ensuremath{-}2.75}\),
  \(\displaystyle {\ensuremath{-}2.50}\)
}
]
\addplot [thick, steelblue52138189]
table {%
0 -2.84150188869545
1 -2.56397597761854
2 -2.9600679667378
3 -3.0000000092834
4 -3.00000000961257
5 -3.00000000968067
6 -3.0000000096983
7 -3.00000000970765
8 -3.00000000971618
9 -3.00000000971994
10 -3.00000000971652
11 -3.00000000970657
12 -3.0000000096953
13 -3.00000000968554
14 -3.00000000967938
15 -3.00000000968007
16 -3.00000000968409
17 -3.00000000968797
18 -3.0000000096925
19 -3.00000000969514
20 -3.00000000969475
21 -3.00000000969424
22 -3.00000000969288
23 -3.0000000096904
24 -3.00000000968981
25 -3.00000000968977
26 -3.00000000968919
27 -3.00000000969036
28 -3.00000000969141
29 -3.00000000969111
30 -3.00000000969189
31 -3.00000000969223
32 -3.00000000969119
33 -3.00000000969143
34 -3.00000000969151
35 -3.0000000096905
36 -3.00000000969095
37 -3.00000000969132
38 -3.00000000969056
39 -3.00000000969116
40 -3.00000000969158
41 -3.00000000969078
42 -3.0000000096913
43 -3.00000000969162
44 -3.00000000969074
45 -3.00000000969122
46 -3.00000000969153
47 -3.00000000969067
48 -3.00000000969118
49 -3.00000000969153
50 -3.00000000969069
51 -3.00000000969151
52 -3.00000000969319
53 -3.00000000969456
54 -3.00000000969494
55 -3.00000000969442
56 -3.00000000969351
57 -3.00000000969276
58 -3.0000000096924
59 -3.0000000096924
60 -3.00000000969257
61 -3.00000000969275
62 -3.00000000969286
63 -3.00000000969289
64 -3.00000000969289
65 -3.00000000969289
66 -3.00000000969288
67 -3.00000000969288
68 -3.00000000969287
69 -3.00000000969286
70 -3.00000000969285
71 -3.00000000969284
72 -3.00000000969284
73 -3.00000000969285
74 -3.00000000969285
};
\end{groupplot}

\end{tikzpicture}}\\
	\subfloat[Residual.]{
\begin{tikzpicture}

\definecolor{darkgray178}{RGB}{178,178,178}
\definecolor{silver188}{RGB}{188,188,188}
\definecolor{steelblue52138189}{RGB}{52,138,189}
\definecolor{whitesmoke238}{RGB}{238,238,238}

\begin{axis}[
height=0.8*\axisdefaultheight,
width=\axisdefaultwidth,
axis background/.style={fill=whitesmoke238},
axis line style={silver188},
tick pos=left,
tick scale binop=\times,
x grid style={darkgray178},
xlabel={\(\displaystyle \tau\)},
xmajorgrids,
xmin=-3.7, xmax=77.7,
xtick style={color=black},
xtick={-20,0,20,40,60,80},
xticklabels={
  \(\displaystyle {\ensuremath{-}20}\),
  \(\displaystyle {0}\),
  \(\displaystyle {20}\),
  \(\displaystyle {40}\),
  \(\displaystyle {60}\),
  \(\displaystyle {80}\)
},
y grid style={darkgray178},
ylabel={\(\displaystyle r\)},
ymajorgrids,
ymin=-2.29284321166362, ymax=48.2624895929328,
ytick style={color=black},
ytick={-10,0,20,40},
yticklabels={
  \(\displaystyle {\ensuremath{-}10}\),
  \(\displaystyle {0}\),
  \(\displaystyle {20}\),
  \(\displaystyle {40}\),
}
]
\addplot [thick, steelblue52138189]
table {%
0 45.9645199199966
1 25.3912810495892
2 23.4492892854164
3 21.1685433657287
4 16.1436505313849
5 12.0070304389595
6 10.6838080407492
7 11.6547392481871
8 10.8406254743039
9 9.77460891481191
10 8.12425066120217
11 7.9421703943902
12 5.24886872377169
13 4.23678766667037
14 5.91202113959367
15 6.24529318701859
16 4.31069712689752
17 5.04533499420789
18 5.13847931885558
19 3.89866739495401
20 4.74353058616838
21 4.87413504454224
22 3.67047154281593
23 4.33651103484469
24 4.83081960396905
25 3.49056996123361
26 4.32215690217367
27 4.89277586876675
28 3.4958843767031
29 4.39668911157256
30 4.87181829181918
31 3.51416586233542
32 4.39103206033719
33 4.86398573771796
34 3.51771149467371
35 4.36392236314295
36 4.86528868101003
37 3.5100037372687
38 4.3644138707788
39 4.86496996906211
40 3.50710406938908
41 4.3686568178958
42 4.86403217886881
43 3.50861384100063
44 4.36859386401943
45 4.86472309080807
46 3.50969052174031
47 4.36800119679452
48 4.86496832013998
49 3.50930525599329
50 2.73713047059605
51 1.90654037892572
52 1.39096412090613
53 1.07888350908415
54 0.940093812818137
55 0.717388889699901
56 0.45549876390277
57 0.312166217423195
58 0.291548068339198
59 0.272405614845321
60 0.204796421923192
61 0.131097971458349
62 0.0741554138984825
63 0.0732508941584534
64 0.0848544481059898
65 0.0736875916763407
66 0.0507116618441125
67 0.0314725384113523
68 0.0278271376406054
69 0.03656488538087
70 0.0365668964252059
71 0.0292052346646132
72 0.0187144793182665
73 0.00918506732967265
74 0.00512646127258337
};
\end{axis}

\end{tikzpicture}}
	\caption{Demonstration of the effect of $T_\text{cut}$ in Algorithm \ref{alg:switching_ADMM}.}
	\label{fig:ex_1_u}
\end{figure}

\subsection{Hybrid Vehicle Platooning}
We now consider the problem of hybrid vehicle platooning that was presented as a benchmark problem for hybrid distributed MPC methods in \cite{mallick2024comparison}.
\begin{color}{black}This example is a tracking problem and does not satisfy Assumptions \ref{as:system}$-$\ref{as:lypanov}; therefore, the approach of \cite{maDistributedMPCBased2023} cannot be used.\end{color}
Consider $M$ vehicles that must form a platoon configuration while traveling in a single lane.
Each vehicle $i$ has state $x_i = \begin{bmatrix}
	p_i & v_i
\end{bmatrix}^\top$, where $p_i \in \mathbb{R}$ and $v_i \in \mathbb{R}$ are the position and velocity of the vehicle, and control $u_i \in \mathbb{R}$, the normalized throttle position.
The PWA vehicle dynamics $x_i^+ = f_i(x_i, u_i)$ have seven PWA regions, modeling discrete gear changes that switch dependent on velocity, changing the traction force generated by the vehicle's control input.
For the sake of space the reader is referred to \cite{mallick2024comparison} for the full dynamics.
The control goal is for each vehicle to maintain a desired separation from the vehicle in front of it while the front vehicle follows a reference trajectory.
Without loss of generality it is assumed that the set of vehicles $\mathcal{M}$ is ordered by the positions of vehicles, such that vehicle $1$ is the front vehicle, and vehicle $i$ tracks the trajectory of vehicle $i-1$.
The stage costs are then
\begin{equation}
	\begin{aligned}
		\ell_1\big(x_1, u_1\big) &= \|x_i - r - \eta\|_Q + \|u_i\|_R \\
		\ell_i\big(x_i, u_i, x_{i-1}\big) &= \|x_i - x_{i-1} - \eta\|_Q + \|u_i\|_R,
	\end{aligned}
\end{equation}
for $ i = 2,\cdots,M$ with $Q = \begin{bmatrix}
	1 & 0 \\ 0 & 0.1
\end{bmatrix}$, $R = 1$, $\eta = \begin{bmatrix}
50 & 0
\end{bmatrix}^\top$ and $r \in \mathbb{R}^{2}$ the reference state.
Velocity and position bounds 
\begin{equation}
	\begin{aligned}
		3.94 \leq v_i \leq 45.84 \\
		0 \leq p_i \leq 10000
	\end{aligned}
\end{equation}
define local convex constraints on states and inputs, and the coupling constraints
\begin{equation}
	h_i(x_i, x_{i-1}) = x_i - x_{i-1} + d_\text{safe} \leq 0
\end{equation}
enforce a safe distance $d_\text{safe} = 25$ between vehicles.
Vehicles are initialized with uniformly randomized velocities in the range $\begin{bmatrix}
	5 & 30
\end{bmatrix}$ and with uniformly randomized positions in the range $\begin{bmatrix}
50 & 100
\end{bmatrix}$ behind the preceding vehicle. 

We compare our approach against existing controllers outlined in \cite{mallick2024comparison}.
For our approach, referred to as SwA, we again use a horizon of $N = 5$, $T_\text{ADMM} = 100$ and $\rho = 0.5$ for Algorithm \ref{alg:switching_ADMM}, again ensuring the total residual is less than 0.01.
Once again the edge case of continuous switching is never observed; $T_\text{cut}$ is consequently set to also be 100.
We implement a multi-start approach with two initial guesses, choosing the solution that yields a lower cost.
At time step $t$ the shifted solution from time step $t-1$ is used, appending a constant control value,
\begin{equation}
	\textbf{u}_i = \big(\bar{u}^\top_i(1), \dots, \bar{u}^\top_i(N-1), \bar{u}^\top_i(N-1) \big)^\top,
\end{equation}
as well as a constant control sequence, maintaining the current velocity $v_{i}$.
The MPC controllers for comparison from \cite{mallick2024comparison} are based on converting the PWA dynamics to MLD form \cite{bemporadControlSystemsIntegrating1999} and solving MIQPs.
The centralized controller, referred to as C, solves $\mathcal{P}_\text{d}$ directly as one MIQP, considering all vehicles in the one optimization problem, solving for all control inputs while considering all couplings.
The sequential controller, referred to as S, is a distributed controller where each vehicle solves a local MIQP once, in a predefined order, communicating the solutions down the sequence of vehicles.
Finally, the non-convex ADMM controller, referred to as NCA, applies ADMM directly to $\mathcal{P}_\text{d}$, with vehicles iteratively solving local MIQPs.
For NCA 100 iterations are used for a fair comparison between the two ADMM-based approaches.
\begin{color}{black}Optimizing the solver choice for computation time, for the SwA approach the quadratic programs are solved with the qpOASES solver \cite{Ferreau2014}, while all MIQPs are solved with Gurobi \cite{gurobi}.\end{color}

To further illustrate the mechanism of Algorithm \ref{alg:switching_ADMM} Fig.  \ref{fig:ex_2_u} shows the local solutions $u_i(0)$ throughout the first 20 iterations\footnote{For the remaining 80 iterations the values change negligibly.} at the first time step $t = 0$ of a simulation with $M = 5$.
Red dots indicate iterations in which local systems changed to new switching sequences, changing the global approximation $\mathcal{P}_s$.
It is seen that the inputs vary sharply as vehicles iteratively solve their local minimization problems, communicate solutions, and identify new local switching sequences.
Eventually the changing of switching sequences stops, indicating that the local solutions are no longer pushed to the boundaries of the local constraint sets, and the solutions have settled in a local minimum of $\mathcal{P}_\text{d}$.
Fig.  \ref{fig:ex_2_r} shows the total residual over the entire 100 iterations, demonstrating that subsystems come to agree on the values of the coupled states, as in Theorem \ref{prop:feas}.
\begin{figure}
	\centering
	\subfloat[Control inputs. Red dots indicate when a local switch of $s_i$ occurred.\label{fig:ex_2_u}]{\input{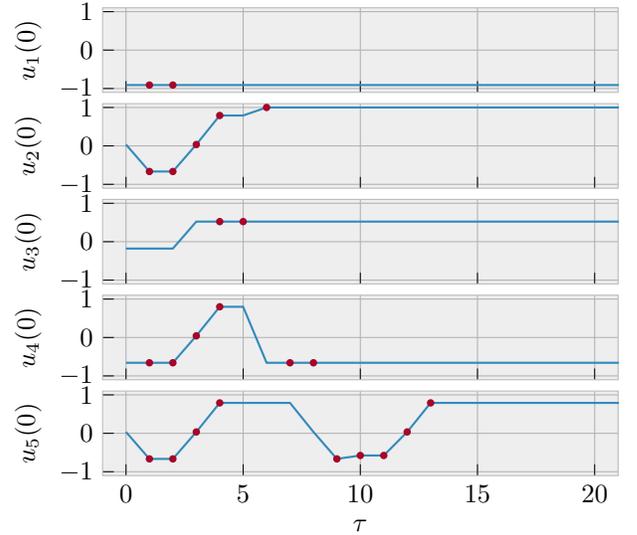}
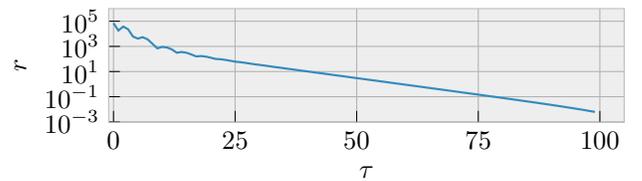}\\
	\subfloat[Residual (log scale).\label{fig:ex_2_r}]{
\begin{tikzpicture}[spy using outlines={circle,yellow,magnification=5,size=1.5cm, connect spies}]

\definecolor{darkgray178}{RGB}{178,178,178}
\definecolor{silver188}{RGB}{188,188,188}
\definecolor{steelblue52138189}{RGB}{52,138,189}
\definecolor{whitesmoke238}{RGB}{238,238,238}

\begin{axis}[
height=0.8*\axisdefaultheight,
width=\axisdefaultwidth,
axis background/.style={fill=whitesmoke238},
axis line style={silver188},
tick pos=left,
tick scale binop=\times,
x grid style={darkgray178},
xlabel={\(\displaystyle \tau\)},
xmajorgrids,
xmin=-1, xmax=105,
xtick style={color=black},
xtick={0,25,50,75,100},
xticklabels={
  \(\displaystyle {0}\),
  \(\displaystyle {25}\),
  \(\displaystyle {50}\),
  \(\displaystyle {75}\),
  \(\displaystyle {100}\),
},
y grid style={darkgray178},
ymode=log,
ylabel={\(\displaystyle r\)},
ymajorgrids,
ymin=-0.0001, ymax=1000000,
ytick style={color=black},
ytick={0.001, 0.1, 10, 1000, 100000},
yticklabels={
  \(\displaystyle {10^{-3}}\),
	\(\displaystyle {10^{-1}}\),
	\(\displaystyle {10^{1}}\),
	\(\displaystyle {10^{3}}\),
	\(\displaystyle {10^{5}}\),
}
]
\addplot [thick, steelblue52138189]
table {%
0 70098.4483934242
1 18060.2060026133
2 37721.2327820656
3 22829.9474229998
4 6070.87083659955
5 4157.23660826791
6 5371.58155377254
7 3649.29445520271
8 1613.75296864981
9 698.999447800056
10 902.218355049472
11 817.932650417223
12 574.739125330053
13 312.90688051885
14 358.519458205454
15 311.860854530968
16 227.730882569314
17 159.843082221581
18 169.989201616609
19 153.898245713322
20 125.322258775374
21 99.9349323652241
22 94.8679295146564
23 84.5359221542355
24 72.1445290112954
25 61.2645693959732
26 55.9632625998163
27 49.7515366657378
28 43.4054382501298
29 37.7988939580427
30 33.9191002695391
31 30.1067059594786
32 26.5190019101399
33 23.3428916031171
34 20.7983128492944
35 18.4486208495789
36 16.3132803351646
37 14.4212753016452
38 12.8118194654644
39 11.3614343962438
40 10.0624789018989
41 8.91106474353908
42 7.90716759675147
43 7.01121776720203
44 6.21356763521969
45 5.50640682727337
46 4.88363695589797
47 4.32999625401594
48 3.83829455359789
49 3.40234371857958
50 3.01686843628078
51 2.67471972289093
52 2.37114806993686
53 2.10198850013094
54 1.86360155017024
55 1.65214287714358
56 1.46460054704783
57 1.29831586409595
58 1.15094410373717
59 1.0202529476207
60 0.904361781228277
61 0.801606158647823
62 0.710513567140505
63 0.629739810524811
64 0.558117814389686
65 0.494613559999201
66 0.438311043746274
67 0.388388449743935
68 0.344123298235429
69 0.304875179530534
70 0.270076459741443
71 0.239221464678928
72 0.21186336367146
73 0.187606020571828
74 0.166098248233139
75 0.147028061220584
76 0.130119221741106
77 0.115126813500497
78 0.101833691816995
79 0.0900471732571637
80 0.0795965096816027
81 0.0703303084678081
82 0.0621143387453949
83 0.0548295467785804
84 0.0483703959813897
85 0.0426433121611834
86 0.0375653275672407
87 0.0330628686308004
88 0.0290707066368648
89 0.0255310060627573
90 0.0223924866420935
91 0.0196096778331973
92 0.0171422642002852
93 0.0149544996737696
94 0.0130146896139714
95 0.0112947310991074
96 0.00976970669431651
97 0.00841752309995927
98 0.00721859103203786
99 0.00622641148424015
};
\end{axis}
\end{tikzpicture}}
	\caption{Iterations of Algorithm \ref{alg:switching_ADMM}.}
\end{figure}

Fig.  \ref{fig:ex_2_J} shows, for $M = 10$ vehicles, the cumulative tracking cost
\begin{equation}
	\begin{aligned}
		J(t) = &\sum_{\tau = 0}^t \ell_{1}\big(x_1(\tau), u_1(\tau)\big) \\
		 &+ \sum_{i \in \mathcal{M}\setminus\{1\}} \ell_i\big(x_i(\tau), u_i(\tau), x_{i-1}(\tau)\big) 
	\end{aligned}
\end{equation} 
under each of the different controllers.
All controllers manage to reach a stable platoon formation, where the tracking cost no longer grows.
The centralized controller naturally performs the best as it finds the global optimum of $\mathcal{P}_\text{d}$. 
The SwA controller outperforms both other distributed control solutions.
Fig.  \ref{fig:ex_2_n} shows the total tracking cost and the computation times for each controller with $M = 5,10$, and $15$ vehicles.
As the number of vehicles increases the performance of the sequential and non-convex ADMM controllers decreases significantly with respect to the centralized controller, while the performance drop of the SwA controller scales much better.
The computation times show that the SwA is the fastest controller as it involves the solutions to only QPs rather than MIQPs.
Both the NCA and SwA approaches have computation times that scale elegantly with $M$, as the size of the optimization problems and the number of iterations is independent of $M$.
However, as NCA solves MIQPs iteratively rather than QPs, it has a significantly higher computational burden.
\begin{figure}
	\centering
\begin{tikzpicture}

\definecolor{darkgray178}{RGB}{178,178,178}
\definecolor{darkolivegreen7012033}{RGB}{70,120,33}
\definecolor{firebrick166640}{RGB}{166,6,40}
\definecolor{lightgray204}{RGB}{204,204,204}
\definecolor{silver188}{RGB}{188,188,188}
\definecolor{slategray122104166}{RGB}{122,104,166}
\definecolor{steelblue52138189}{RGB}{52,138,189}
\definecolor{whitesmoke238}{RGB}{238,238,238}

\begin{axis}[
axis background/.style={fill=whitesmoke238},
axis line style={silver188},
legend cell align={left},
legend columns=4,
legend style={
  fill opacity=0.8,
  draw opacity=1,
  text opacity=1,
  at={(0.05,0.97)},
  anchor=north west,
  draw=lightgray204,
  fill=whitesmoke238,
},
tick pos=left,
tick scale binop=\times,
x grid style={darkgray178},
xlabel={Time step \(\displaystyle t\)},
xmajorgrids,
xmin=-4.95, xmax=103.95,
xtick style={color=black},
xtick={-20,0,20,40,60,80,100,120},
xticklabels={
  \(\displaystyle {\ensuremath{-}20}\),
  \(\displaystyle {0}\),
  \(\displaystyle {20}\),
  \(\displaystyle {40}\),
  \(\displaystyle {60}\),
  \(\displaystyle {80}\),
  \(\displaystyle {100}\),
  \(\displaystyle {120}\)
},
y grid style={darkgray178},
ylabel={\(J(t)\)},
ymajorgrids,
ymin=-19116.8971478869, ymax=600000.840105626,
ytick style={color=black},
ytick={-50000,0,100000,200000,300000,400000},
yticklabels={
  \(\displaystyle {\ensuremath{-}50000}\),
  \(\displaystyle {0}\),
  \(\displaystyle {1}\),
  \(\displaystyle {2}\),
  \(\displaystyle {3}\),
  \(\displaystyle {4}\),
}
]
\addplot [thick, steelblue52138189]
table {%
0 0
1 8669.93014433033
2 19185.045091873
3 31454.221048256
4 44883.3285964761
5 58743.2981850602
6 72458.3296232018
7 85558.5362225148
8 97650.334104394
9 108545.269381531
10 118209.03678289
11 126659.983010788
12 134100.368559446
13 140846.010125882
14 147103.775936555
15 152953.537648709
16 158408.431181484
17 163450.039696881
18 168059.400063012
19 172221.175654458
20 175920.704490848
21 179154.428034525
22 181926.768394096
23 184249.080683794
24 186146.106191951
25 187654.098828333
26 188811.895029335
27 189661.055282155
28 190245.268887744
29 190608.848087664
30 190805.252049325
31 190892.946321078
32 190922.338156544
33 190929.76735707
34 190937.118609834
35 190954.606492439
36 190983.197142984
37 191019.331598449
38 191055.52829095
39 191081.949237984
40 191097.879441497
41 191106.52148339
42 191110.755332156
43 191112.997108807
44 191114.222171336
45 191114.968654529
46 191115.491648958
47 191115.905950114
48 191116.26421551
49 191116.59267587
50 191116.905605323
51 191117.211062109
52 191117.513481397
53 191117.815123001
54 191118.116985508
55 191118.419395388
56 191118.722361672
57 191119.02577416
58 191119.329500897
59 191119.633429031
60 191119.937475753
61 191120.2415863
62 191120.545727415
63 191120.849880579
64 191121.154036485
65 191121.458191132
66 191121.762343294
67 191122.066493025
68 191122.370640837
69 191122.674787311
70 191122.978932933
71 191123.283078057
72 191123.587222915
73 191123.891367647
74 191124.195512331
75 191124.499657005
76 191124.803801684
77 191125.107946375
78 191125.412091077
79 191125.716235787
80 191126.020380502
81 191126.324525222
82 191126.628669943
83 191126.932814665
84 191127.236959388
85 191127.541104111
86 191127.845248834
87 191128.149393557
88 191128.45353828
89 191128.757683003
90 191129.061827726
91 191129.365972449
92 191129.670117172
93 191129.974261895
94 191130.278406618
95 191130.58255134
96 191130.886696063
97 191131.190840786
98 191131.494985509
99 191131.799130232
};
\addlegendentry{C}
\addplot [thick, firebrick166640, dashed]
table {%
0 0
1 8669.61093189811
2 19241.1055177235
3 31734.7898990688
4 45560.6683130185
5 59939.494371905
6 74270.0958102035
7 88077.6349805796
8 100921.316118859
9 112525.192022741
10 122844.639694221
11 131984.457168839
12 140197.788579889
13 147798.515682289
14 154966.193357613
15 161761.375476229
16 168184.211321482
17 174207.013020231
18 179794.790296929
19 184923.314981813
20 189572.484430516
21 193729.971565014
22 197390.492913825
23 200556.324765897
24 203237.407195891
25 205457.84299968
26 207253.726260048
27 208663.743297196
28 209729.440824413
29 210494.612491484
30 211004.635571105
31 211306.807563536
32 211461.333805369
33 211525.524557629
34 211544.997317481
35 211550.605362457
36 211559.513390414
37 211578.852907437
38 211607.100132982
39 211640.006171005
40 211669.562470597
41 211688.010830333
42 211698.634909327
43 211704.091223817
44 211706.956487305
45 211708.474198164
46 211709.352247242
47 211709.935606277
48 211710.379127402
49 211710.752326768
50 211711.088471172
51 211711.405175586
52 211711.712251193
53 211712.015175879
54 211712.316798901
55 211712.618450455
56 211712.920647676
57 211713.223506884
58 211713.52685626
59 211713.830513236
60 211714.134450773
61 211714.438556466
62 211714.742744317
63 211715.046917578
64 211715.351158086
65 211715.655355543
66 211715.959563451
67 211716.263768303
68 211716.567985784
69 211716.872194553
70 211717.176380601
71 211717.480593799
72 211717.78483412
73 211718.089074395
74 211718.393263759
75 211718.697478894
76 211719.001664141
77 211719.305847621
78 211719.610102557
79 211719.91428966
80 211720.218500798
81 211720.522683224
82 211720.826928069
83 211721.131127498
84 211721.435350495
85 211721.739549492
86 211722.043760493
87 211722.34796609
88 211722.652198136
89 211722.956394618
90 211723.2606305
91 211723.564822586
92 211723.868977585
93 211724.173205849
94 211724.477420872
95 211724.781663245
96 211725.08585058
97 211725.390048348
98 211725.694254762
99 211725.998485489
};
\addlegendentry{SwA}
\addplot [thick, slategray122104166]
table {%
0 0
1 8671.02804261011
2 19429.123099329
3 32759.0088218614
4 48597.9434918087
5 66474.0302973831
6 85671.753528458
7 105308.891186085
8 124570.381824187
9 142938.859709524
10 160252.156658119
11 176812.63780057
12 193070.664799114
13 209313.923320428
14 225535.092061488
15 241417.847063167
16 256532.630009156
17 270443.674271289
18 282799.368051647
19 293422.619689678
20 302439.351336808
21 310160.014281024
22 316928.513686994
23 323071.159794503
24 328752.035310501
25 333918.014202223
26 338419.322976549
27 342097.95409553
28 344840.462685784
29 346631.42492173
30 347614.332612168
31 348063.405312472
32 348299.018175525
33 348553.767077119
34 348969.597783513
35 349617.576239615
36 350471.738968569
37 351474.924107443
38 352571.691782823
39 353729.162794815
40 354897.708401047
41 356011.048866839
42 357002.19652194
43 357784.064951226
44 358324.2361675
45 358611.825060909
46 358721.060508691
47 358749.458997183
48 358754.385279057
49 358755.359363665
50 358755.692159037
51 358756.045714098
52 358756.349698355
53 358756.660671355
54 358756.965305021
55 358757.270673025
56 358757.575034308
57 358757.879400541
58 358758.18358044
59 358758.487733775
60 358758.791847719
61 358759.095950577
62 358759.400044211
63 358759.704134303
64 358760.008222086
65 358760.312308834
66 358760.616394981
67 358760.920480837
68 358761.224566533
69 358761.528652149
70 358761.832737721
71 358762.136823272
72 358762.440908811
73 358762.744994345
74 358763.049079875
75 358763.353165403
76 358763.657250931
77 358763.961336458
78 358764.265421985
79 358764.569507511
80 358764.873593038
81 358765.177678565
82 358765.481764091
83 358765.785849618
84 358766.089935145
85 358766.394020671
86 358766.698106198
87 358767.002191725
88 358767.306277251
89 358767.610362778
90 358767.914448305
91 358768.218533831
92 358768.522619358
93 358768.826704884
94 358769.130790411
95 358769.434875938
96 358769.738961464
97 358770.043046991
98 358770.347132517
99 358770.651218044
};
\addlegendentry{S}
\addplot [thick, darkolivegreen7012033, dashed]
table {%
0 0
1 8671.02804260986
2 19429.123099324
3 32759.0088218401
4 48597.9434921073
5 66474.030298405
6 85671.7535413328
7 105308.038380432
8 124551.838904868
9 142862.673318626
10 160078.417286171
11 176501.063259837
12 192579.874675065
13 208601.461209217
14 224584.491206218
15 240249.444002186
16 255168.944618075
17 268897.684287214
18 281067.869679734
19 291421.538929385
20 299985.605767411
21 307075.15330186
22 313082.311114365
23 318377.931358476
24 323258.521669245
25 327846.611405925
26 332052.872872036
27 335702.904570813
28 338638.230818906
29 340768.465058097
30 342151.873821483
31 342975.817025238
32 343456.975536078
33 343811.82283257
34 344201.164917207
35 344737.871460491
36 345499.52021877
37 346459.250926255
38 347538.477325595
39 348705.215114581
40 349915.845417463
41 351092.083895723
42 352137.402080817
43 352966.268939127
44 353545.339169281
45 353858.716995109
46 353980.170640323
47 354012.026360808
48 354017.446897113
49 354018.531182877
50 354018.854591927
51 354019.254031401
52 354019.593635634
53 354019.959176836
54 354020.318756923
55 354020.68301036
56 354021.046848975
57 354021.411533324
58 354021.776259356
59 354022.141149992
60 354022.506071704
61 354022.871028564
62 354023.235995949
63 354023.60097133
64 354023.965949747
65 354024.330930059
66 354024.695911197
67 354025.060892792
68 354025.425874602
69 354025.790856525
70 354026.155838018
71 354026.520820625
72 354026.885801426
73 354027.250783465
74 354027.615764761
75 354027.980746017
76 354028.345727187
77 354028.710708313
78 354029.075689424
79 354029.440670521
80 354029.805651615
81 354030.170632705
82 354030.535613794
83 354030.900594882
84 354031.265576454
85 354031.630557425
86 354031.995539131
87 354032.360520941
88 354032.725502867
89 354033.090484847
90 354033.45546685
91 354033.82044887
92 354034.185430896
93 354034.550412926
94 354034.915394958
95 354035.280376991
96 354035.645359024
97 354036.010341058
98 354036.375323092
99 354036.740305126
};
\addlegendentry{NCA}
\end{axis}

\end{tikzpicture}
	\caption{Cumulative tracking cost for $M = 10$.}
	\label{fig:ex_2_J}
\end{figure}
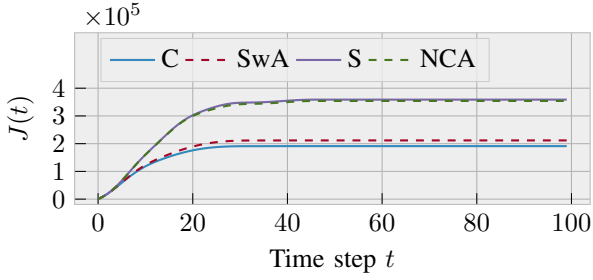
\begin{figure}
	\centering
\begin{tikzpicture}

\definecolor{darkgray178}{RGB}{178,178,178}
\definecolor{darkolivegreen7012033}{RGB}{70,120,33}
\definecolor{firebrick166640}{RGB}{166,6,40}
\definecolor{lightgray204}{RGB}{204,204,204}
\definecolor{silver188}{RGB}{188,188,188}
\definecolor{slategray122104166}{RGB}{122,104,166}
\definecolor{steelblue52138189}{RGB}{52,138,189}
\definecolor{whitesmoke238}{RGB}{238,238,238}

\newcommand{\ms}{2}
\newcommand{\myP}{on 4pt off 1.6pt}

\begin{groupplot}[group style={group size=1 by 2, vertical sep = 0.3cm}]
\nextgroupplot[
axis background/.style={fill=whitesmoke238},
axis line style={silver188},
legend cell align={left},
legend columns=4,
legend style={
  fill opacity=0.8,
  draw opacity=1,
  text opacity=1,
  at={(0.03,0.97)},
  anchor=north west,
  draw=lightgray204,
  fill=whitesmoke238
},
scaled x ticks=manual:{}{\pgfmathparse{#1}},
tick pos=left,
tick scale binop=\times,
x grid style={darkgray178},
xmajorgrids,
xmin=4.5, xmax=15.5,
xtick style={color=black},
xtick={5, 6, 7, 8, 9, 10, 11, 12, 13, 14, 15},
xticklabels={},
y grid style={darkgray178},
ylabel={\( J(100)\)},
ymajorgrids,
ymin=43206.7613561477, ymax=698546.522685339,
ytick style={color=black},
ytick={0,100000,200000,300000,400000, 500000, 600000},
yticklabels={
  \(\displaystyle {0}\),
  \(\displaystyle {1}\),
  \(\displaystyle {2}\),
  \(\displaystyle {3}\),
  \(\displaystyle {4}\),
  \(\displaystyle {5}\),
  \(\displaystyle {6}\),
}
]
\addplot [steelblue52138189, dash pattern=on 4pt off 1.6pt, mark=o, mark size=2.5, mark options={solid}, line width=0.6]
table {%
5 115195.34705984
10 191132.103274955
15 279363.176291191
};
\addlegendentry{C}
\addplot [firebrick166640, dash pattern=on 4pt off 1.6pt, mark=o, mark size=2.5, mark options={solid} , line width=0.6]
table {%
5 128860.407229092
10 211726.302713371
15 306579.083084007
};
\addlegendentry{SwA}
\addplot [slategray122104166, dash pattern=on 4pt off 1.6pt, mark=o, mark size=2.5, mark options={solid} , line width=0.6]
table {%
5 207756.280917537
10 358770.955303571
15 593234.567476313
};
\addlegendentry{S}
\addplot [darkolivegreen7012033, dash pattern=on 4pt off 1.6pt, mark=o, mark size=2.5, mark options={solid} , line width=0.6]
table {%
5 208841.780338245
10 354037.10528716
15 578054.71592232
};
\addlegendentry{NCA}

\nextgroupplot[
axis background/.style={fill=whitesmoke238},
axis line style={silver188},
log basis y={10},
tick pos=left,
tick scale binop=\times,
x grid style={darkgray178},
xlabel={\(\displaystyle M\)},
xmajorgrids,
xmin=4.5, xmax=15.5,
xtick style={color=black},
xtick={5, 6, 7, 8, 9, 10, 11, 12, 13, 14, 15},
xticklabels={
  \(\displaystyle {5}\),
    \(\),
     \(\),
      \(\),
       \(\),
  \(\displaystyle {10}\),
   \(\),
    \(\),
     \(\),
      \(\),
  \(\displaystyle {15}\),
},
y grid style={darkgray178},
ylabel={Time (s)},
ymajorgrids,
ymin=0.0125231824274463, ymax=131.738807955243,
ymode=log,
ytick style={color=black},
ytick={0.01,0.1,1,10,100,10000,1000000},
yticklabels={
  \( {10^{-2}}\),
  \( {10^{-1}}\),
  \( {10^{0}}\),
   \( {10^{1}}\),
  \( {10^{2}}\),
  \( {10^{4}}\),
  \( {10^{6}}\)
}
]
\addplot [steelblue52138189, dash pattern=on 4pt off 1.6pt, mark=o, mark size=\ms, mark options={solid} , line width=0.6]
table {%
5 0.216540009975433
10 2.074129986763
15 4.3185800075531
};
\addplot [thick, steelblue52138189, mark=triangle, mark size=\ms, mark options={solid}, only marks]
table {%
5 0.0810000896453857
10 0.388000011444092
15 0.131999969482422
};
\addplot [thick, steelblue52138189, mark=triangle, mark size=\ms, mark options={solid,rotate=180}, only marks]
table {%
5 1.56100010871887
10 38.0660002231598
15 59.8619999885559
};
\addplot [steelblue52138189]
table {%
5 1.56100010871887
5 0.0810000896453857
};
\addplot [steelblue52138189]
table {%
10 38.0660002231598
10 0.388000011444092
};
\addplot [steelblue52138189]
table {%
15 59.8619999885559
15 0.131999969482422
};
\addplot [firebrick166640, dash pattern=on 4pt off 1.6pt, mark=o, mark size=\ms, mark options={solid}, line width=0.6]
table {%
5 0.03072056
10 0.030439805
15 0.03467306
};
\addplot [thick, firebrick166640, mark=triangle, mark size=\ms, mark options={solid}, only marks]
table {%
5 0.021632
10 0.021632
15 0.025043
};
\addplot [thick, firebrick166640, mark=triangle, mark size=\ms, mark options={solid,rotate=180}, only marks]
table {%
5 0.049008
10 0.050883
15 0.104262
};
\addplot [firebrick166640]
table {%
5 0.049008
5 0.021632
};
\addplot [firebrick166640]
table {%
10 0.050883
10 0.021632
};
\addplot [firebrick166640]
table {%
15 0.104262
15 0.025043
};
\addplot [slategray122104166, dash pattern=on 4pt off 1.6pt, mark=o, mark size=\ms, mark options={solid}, line width=0.6]
table {%
5 0.0634800052642822
10 0.173880002498627
15 0.188099982738495
};
\addplot [thick, slategray122104166, mark=triangle, mark size=\ms, mark options={solid}, only marks]
table {%
5 0.0380001068115234
10 0.108999967575073
15 0.0999996662139893
};
\addplot [thick, slategray122104166, mark=triangle, mark size=\ms, mark options={solid,rotate=180}, only marks]
table {%
5 0.0969998836517334
10 0.331000089645386
15 0.371999740600586
};
\addplot [slategray122104166]
table {%
5 0.0969998836517334
5 0.0380001068115234
};
\addplot [slategray122104166]
table {%
10 0.331000089645386
10 0.108999967575073
};
\addplot [slategray122104166]
table {%
15 0.371999740600586
15 0.0999996662139893
};
\addplot [darkolivegreen7012033, dash pattern=on 4pt off 1.6pt, mark=o, mark size=\ms, mark options={solid}, line width=0.6]
table {%
5 2.38537132740021
10 3.23639132738113
15 3.71080122709274
};
\addplot [thick, darkolivegreen7012033, mark=triangle, mark size=\ms, mark options={solid}, only marks]
table {%
5 1.35300278663635
10 2.11699986457825
15 1.94700217247009
};
\addplot [thick, darkolivegreen7012033, mark=triangle, mark size=\ms, mark options={solid,rotate=180}, only marks]
table {%
5 4.66299962997437
10 6.48799848556519
15 5.6150016784668
};
\addplot [darkolivegreen7012033]
table {%
5 4.66299962997437
5 1.35300278663635
};
\addplot [darkolivegreen7012033]
table {%
10 6.48799848556519
10 2.11699986457825
};
\addplot [darkolivegreen7012033]
table {%
15 5.6150016784668
15 1.94700217247009
};
\end{groupplot}

\end{tikzpicture}
	\caption{Tracking cost (top) and max/average/min computation time (bottom) as $M$ increases.}
	\label{fig:ex_2_n}
\end{figure}

\section{Conclusions}\label{sec:conclusion}
This paper has presented a new approach for distributed MPC for PWA systems.
The proposed controller is based on a novel switching ADMM procedure that solves a globally formulated non-convex optimal control problem, without requiring mixed-integer programming, and provably giving agreement on the values of shared states.
The recursive feasibility and stability of the resulting control scheme is proven under additional assumptions on the system.
The theoretical properties of the approach have been demonstrated on a small numerical example, whilst the practical utility has been demonstrated on a larger, more realistic, hybrid control problem.

Future work will look at bounding the level of suboptimality between the closed loop performance of the proposed controller and the theoretically optimal performance of a centralized mixed-integer-based MPC controller.

\appendices
\section{Computation of terminal sets in Assumption \ref{as:term}}
\label{ap:sets}
The procedure presented in \cite{maDistributedMPCBased2023} computes local positively invariant sets that are robust to the effects of coupling from neighboring sub-systems.
The procedure involves, for each sub-system $i$, computing a sequence of sets, beginning from a \textit{reasonably large polytope} $\mathcal{X}_{0, i} \subseteq \bigcup_{l \in \hat{\mathcal{L}}_i}\{x_i \in P_i^{(l)} | K_i^{(l)} x_i \in \mathcal{U}_i\}$, terminating in the robustly positive invariant set $\mathcal{X}_{\text{T}, i}$, which each set in the sequence being a contraction of the previous one.
See \cite{maDistributedMPCBased2023} for construction of $\mathcal{X}_{0, i}$.

In this appendix a modified procedure with respect to that in \cite{maDistributedMPCBased2023} is presented for which the resulting robustly positive invariant sets are robust only to coupling for neighboring sub-systems within their own local terminal sets, as by Assumption \ref{as:term}.
A formal proof of the positive invariance, missing in \cite{maDistributedMPCBased2023}, is also provided.

Define the matrix $A_{\text{cl}, i}^{(l)} = A_i^{(l)} + B_i^{(l)} K_i^{(l)}$. 
Define the set operation
\begin{equation}
	\begin{aligned}
		\mathcal{Q}_i^{(l)}(\mathcal{X}, \mathcal{W}) &= \{x_i \in \mathbb{R}^{n_i} | A_{\text{cl}, i}^{(l)} x_i + w \in \mathcal{X}, \forall w \in \mathcal{W} \} \\
		&= (A_{\text{cl}, i}^{(l)})^{-1} (\mathcal{X} \ominus \mathcal{W}),
	\end{aligned}
\end{equation}
i.e., a one-step controllable set, robust to disturbances in the set $\mathcal{W}$.
Algorithm \ref{alg:sets} outlines the procedure for computing $\mathcal{X}_{\text{T}, i}$ for $i \in \mathcal{M}$.
\begin{algorithm}
	\caption{$\text{Computation of $\mathcal{X}_{\text{T}, i}$'s}$}\label{alg:sets}
	\begin{algorithmic}[1]
		\State \textbf{Inputs:} Initial sets $\mathcal{X}_{0, i}, \forall i \in \mathcal{M}$
		\State \textbf{Initialize:} $\mathcal{X}_{1, i} \gets \emptyset, \forall i \in \mathcal{M}$ and $\tau \gets 1$
 		\While{$\mathcal{X}_{\tau, i} \neq \mathcal{X}_{\tau-1, i}$ for some $i \in \mathcal{M}$}
 			\For{$i \in \mathcal{M}$}
				\State $\mathcal{W}_{\tau-1, i} \gets \bigoplus_{j \in \mathcal{N}_i} A_{ij} \mathcal{X}_{j, \tau-1}, \forall j \in \mathcal{N}_i$
				\State $\mathcal{X}_{\tau, i}^{(l)} \gets \mathcal{Q}_i^{(l)}(\mathcal{X}_{\tau-1, i}, \mathcal{W}_{\tau-1, i}) \cap \mathcal{X}_{\tau-1, i}, \forall l \in \hat{\mathcal{L}}_i$
				\State $\mathcal{X}_{\tau, i} \gets \bigcap_{l \in \hat{\mathcal{L}}_i} \mathcal{X}_{\tau, i}^{(l)}$
			\EndFor
			\State $\tau \gets \tau + 1$
		\EndWhile
		\State $\mathcal{X}_{\text{T}, i} \gets \mathcal{X}_{\tau-1, i}, \forall i \in \mathcal{M}$
		\State \textbf{Outputs:} $\mathcal{X}_{\text{T}, i}, \forall i \in \mathcal{M}$
	\end{algorithmic}
\end{algorithm}
\begin{assumption}[\cite{maDistributedMPCBased2023}]
	\label{as:sets_exist}
	For all $i \in \mathcal{M}$ there exists $\tau \in \mathbb{N}^+$ and $\tau < \infty$ such that $\mathcal{X}_{\tau, i} = \mathcal{X}_{\tau-1, i}$ and $\mathcal{X}_{\tau, i} \neq \emptyset$.
\end{assumption}
\begin{theorem}
	Under Assumption \ref{as:sets_exist} the output sets from Algorithm \ref{alg:sets} are robustly positive invariant as in Assumption \ref{as:term}.
\end{theorem}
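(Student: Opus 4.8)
The plan is to read the robust-invariance property directly off the fixed-point equation that defines $\mathcal{X}_{\text{T},i}$. First I would record the monotonicity of the iteration: since each step intersects the new candidate with the previous set, $\mathcal{X}_{\tau,i}=\bigcap_{l\in\hat{\mathcal{L}}_i}\big(\mathcal{Q}_i^{(l)}(\mathcal{X}_{\tau-1,i},\mathcal{W}_{\tau-1,i})\cap\mathcal{X}_{\tau-1,i}\big)\subseteq\mathcal{X}_{\tau-1,i}$, so the sequence is nested and in particular $\mathcal{X}_{\text{T},i}\subseteq\mathcal{X}_{0,i}\subseteq\bigcup_{l\in\hat{\mathcal{L}}_i}\{x_i\in P_i^{(l)}\mid K_i^{(l)}x_i\in\mathcal{U}_i\}$. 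This immediately supplies the region membership $\mathcal{X}_{\text{T},i}\subseteq\bigcup_{l\in\hat{\mathcal{L}}_i}P_i^{(l)}$ and the input admissibility $K_i^{(l)}x_i\in\mathcal{U}_i$ required by Assumption \ref{as:term}. By Assumption \ref{as:sets_exist} the loop terminates at a finite index with $\mathcal{X}_{\text{T},i}=\mathcal{X}_{\tau,i}=\mathcal{X}_{\tau-1,i}\neq\emptyset$, which gives non-emptiness.

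Writing $\mathcal{W}_i=\bigoplus_{j\in\mathcal{N}_i}A_{ij}\mathcal{X}_{\text{T},j}$, the fixed point satisfies $\mathcal{X}_{\text{T},i}=\big(\bigcap_{l\in\hat{\mathcal{L}}_i}\mathcal{Q}_i^{(l)}(\mathcal{X}_{\text{T},i},\mathcal{W}_i)\big)\cap\mathcal{X}_{\text{T},i}$, from which $\mathcal{X}_{\text{T},i}\subseteq\mathcal{Q}_i^{(l)}(\mathcal{X}_{\text{T},i},\mathcal{W}_i)$ for every $l\in\hat{\mathcal{L}}_i$. The core of the proof is then a direct verification. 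Take any $x_i\in\mathcal{X}_{\text{T},i}$; by the containment above it lies in some region $P_i^{(l^\ast)}$ with $l^\ast\in\hat{\mathcal{L}}_i$, and Assumption \ref{as:system} gives $c_i^{(l^\ast)}=0$, so under the terminal controller $u_i=K_i^{(l^\ast)}x_i$ the update reduces to $f_i(x_i,u_i,\{x_j\}_{j\in\mathcal{N}_i})=A_{\text{cl},i}^{(l^\ast)}x_i+\sum_{j\in\mathcal{N}_i}A_{ij}x_j$. For neighbours with $x_j\in\mathcal{X}_{\text{T},j}$ the coupling term belongs to $\mathcal{W}_i$, and since $x_i\in\mathcal{Q}_i^{(l^\ast)}(\mathcal{X}_{\text{T},i},\mathcal{W}_i)$ the defining property that $A_{\text{cl},i}^{(l^\ast)}x_i+w\in\mathcal{X}_{\text{T},i}$ for all $w\in\mathcal{W}_i$ yields $f_i(x_i,u_i,\{x_j\}_{j\in\mathcal{N}_i})\in\mathcal{X}_{\text{T},i}$. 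This is exactly the robust forward invariance of Assumption \ref{as:term}.

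I expect the main obstacle to be the bookkeeping around the intersection over all $l\in\hat{\mathcal{L}}_i$ versus the single active region $l^\ast$: the terminal set is constructed as the intersection of the controllable sets for every region touching the origin, so one must argue that membership in this intersection guarantees robust controllability precisely under the dynamics $A_{\text{cl},i}^{(l^\ast)}$ that are actually active at $x_i$, which is what makes the single-region step above legitimate. A secondary technical point is the manipulation of the Pontryagin difference, i.e.\ unpacking $\mathcal{Q}_i^{(l)}(\mathcal{X},\mathcal{W})=(A_{\text{cl},i}^{(l)})^{-1}(\mathcal{X}\ominus\mathcal{W})$ back into the quantified membership statement used above, together with checking that $\sum_{j\in\mathcal{N}_i}A_{ij}x_j\in\mathcal{W}_i$ captures every admissible coupling realisation. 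Finally, the origin containment $0\in\mathcal{X}_{\text{T},i}$ follows by an induction showing the origin is preserved throughout the iteration, using that it is a common equilibrium with vanishing coupling; I would verify this preservation step explicitly, as it is the one place where the accounting of $\mathcal{W}_i$ for the neighbouring terminal sets must be invoked.
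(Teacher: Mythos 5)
Your proof is correct and takes essentially the same route as the paper: both rest on the nestedness of the iterates $\mathcal{X}_{\tau,i}$ and on reading the invariance off the definition of $\mathcal{Q}_i^{(l)}$ at the (finite, by Assumption \ref{as:sets_exist}) fixed point, you via the fixed-point inclusion $\mathcal{X}_{\text{T},i}\subseteq\mathcal{Q}_i^{(l)}(\mathcal{X}_{\text{T},i},\mathcal{W}_i)$ and the paper via the containment $\mathcal{X}_{\text{T},i}\subseteq\mathcal{X}_{\tau,i}$ for all $\tau$. Your additional checks of region membership, input admissibility, and origin containment are sound and in fact more complete than the paper's proof, which verifies only the forward-invariance property.
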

\begin{proof}
	As at each iteration $\tau$ of Algorithm \ref{alg:sets} the set $\mathcal{X}_{\tau, i}$ shrinks, then for all $i \in \mathcal{M}$, if $x_i \in \mathcal{X}_{\text{T}, i}$ and $x_j \in \mathcal{X}_{\text{T}, j}$ for all $j \in \mathcal{N}_i$, then $x_i \in \mathcal{X}_{\tau, i}$ and $x_j \in \mathcal{X}_{\tau-1, i}$ for all $j \in \mathcal{N}_i$ and for all $\tau \geq 1$.
	Then, by the definitions of $\mathcal{Q}_i^{(l)}$ and $\mathcal{W}_{\tau-1, i}$, $A_{\text{cl}, i}^{(l)} x_i + \sum_{j \in \mathcal{N}_i} A_{ij} x_j \in \mathcal{X}_{\tau-1, i}$ for all $l \in \Hat{\mathcal{L}}_i$ and all $\tau \geq 2$.
	Then $A_{\text{cl}, i}^{(l)} x_i + \sum_{j \in \mathcal{N}_i} A_{ij} x_j \in \mathcal{X}_{\text{T}, i}$ for all $l \in \Hat{\mathcal{L}}_i$.
	Hence, $\mathcal{X}_{\text{T}, i}$ is robustly positive invariant, robust to the coupling of neighboring sub-systems within their respective terminal sets, under arbitrary switching between the PWA regions $l \in \Hat{\mathcal{L}}_i$.
	It follows directly that $\mathcal{X}_{\text{T}, i}$ is robustly positive invariant for the PWA sub-system under the true switching.
\end{proof}
\section{Computation of $V_{{\lowercase{f}}, i}$} \label{ap:LMI}
Given the structures of $\ell_i$ and $V_{\text{T}, i}$ \eqref{eq:lyapnuov} can be written as
\begin{equation}
	\label{eq:lyapunov_expanded}
	\begin{aligned}
		\sum_{i \in \mathcal{M}} &\big(\star\big)^\top \Phi_i \big(  (A_i^{(l_i)} + B_i^{(l_i)} K_i^{(l_i)}) x_i + \sum_{j \in \mathcal{N}_i} A_{ij}^{(l_i)}x_j \big) \\
		&- x_i^\top \Phi_i x_i + x_i^\top Q_i x_i + (\star)^\top R_i (K_i^{(l_i)}x_i) \\
		&+ \sum_{j \in \mathcal{N}_i} x_j^\top Q_{ij} x_j \leq 0, x_i \in P_i^{(l_i)},
	\end{aligned}
\end{equation}
for all $x_i \in \mathcal{X}_{\text{T}, i}, l_i \in \hat{\mathcal{L}}_i, i \in \mathcal{M}$, recalling that $c_i^{(l_i)} = 0$ for $l_i \in \hat{\mathcal{L}}_i$.
Finding $\Phi_i$'s such that \eqref{eq:lyapunov_expanded} is satisfied can be achieved by considering the system from a global view, ensuring the inequality holds for every combination of PWA regions of the subsystems.
This is equivalent to considering the global system as a single large PWA system with $\prod_{i \in \mathcal{M}}L_i$ regions and using the approach presented in \cite{lazarStabilizingModelPredictive2006} for single PWA systems.
Condition \eqref{eq:lyapunov_expanded} can be reformulated as the set of LMIs
\begin{equation}
	\label{eq:LMI}
	\begin{aligned}
		A_\text{cl}^\top(l) &\Phi A_\text{cl}(l) - \Phi + Q + K(l) R K(l) \prec 0, \\
		 &\forall l = (l_1,\dots,l_M) \in \hat{\mathcal{L}}_1 \times \dots \times \hat{\mathcal{L}}_M
	\end{aligned}
\end{equation}
where $\Phi = \text{blck}(\Phi_1, \dots, \Phi_M)$, $Q = \text{blck}(Q_1 + \sum_{j|1 \in \mathcal{N}_j} Q_{j1},\dots,Q_M + \sum_{j | M \in \mathcal{N}_j}Q_{jM})$, $R = \text{blck}(R_1, \dots, R_M)$, $K(l) = \text{blck}(K_1^{(l_1)}, \dots, K_M^{(M)})$, and
\begin{equation}
	\begin{aligned}
		A_\text{cl}(l) &= \begin{bmatrix}
		A_\text{cl, 1}^{(l_1)} & A_{12}^{(l_1)} & \dots & A_{1M}^{(l_1)} \\
		A_{21}^{(l_2)} & \ddots & & \vdots \\
		\vdots & & \ddots & \\
		A_{M1}^{(l_M)} & & & A_\text{cl, M}^{(l_M)}.
	\end{bmatrix}
	\end{aligned}
\end{equation}
with $A_{ij}^{(l_i)} = 0$ if $j \notin \mathcal{N}_i$.
Evidently, \eqref{eq:LMI} requires centralized computation, and may become intractable for large $M$ or $|\hat{\mathcal{L}}_i|$.
We leave, however, distributed and consequently less computationally demanding solutions to future work.
Finally, \eqref{eq:LMI} is linear in the matrix $\Phi$ as the linear controllers $K_i^{(l)}$ have been assumed to be computed \textit{a priori} as in \cite{maDistributedMPCBased2023}.
The linear controllers could be computed simultaneously using the same expression, leveraging the Schur complement and a change of variables to maintain an LMI expression \cite{lazarStabilizingModelPredictive2006}.

\begin{color}{black}\section{Suboptimality of Algorithm \ref{alg:switching_ADMM}}
	\label{sec:ap3}
	To investigate the suboptimality of Algorithm \ref{alg:switching_ADMM}, the MPC optimization problem $\mathcal{P}_\text{d}$ is solved for 1000 randomly generated two-subsystem PWA networks and states. 
	Each subsystem has two states, one input, and four PWA regions.
	All values defining the local state-space matrices take random values between $\begin{bmatrix}
		-1.5 & 1.5
	\end{bmatrix}$ (stable and unstable systems), while the coupling matrices take random values between $\begin{bmatrix}
		-0.1 & 0.1
	\end{bmatrix}$.
	The four PWA regions are the four quadrants of the Cartesian plane. The local stage costs are $\ell_i = x_i^\top x_i + u_i^\top u_i$, and the local constraints are $|u_i| \leq 1$ and $\|x_i\|_\infty \leq 10$.
	No terminal costs or constraints are considered.
	Problem $\mathcal{P}_\text{d}$ is solved by Algorithm \ref{alg:switching_ADMM} for states $x$ randomly selected with values in the range $\begin{bmatrix}
		-10 & 10
	\end{bmatrix}$, with $\textbf{u}$ the resulting global control sequence.
	To avoid the manual tuning of $\rho$, $T_\text{ADMM}$, and $T_\text{cut}$ for each random system, $T_\text{cut}$ is set to 100 and the algorithm is run until the residual is less than 0.01.
	Control sequences of all zeros are used as weakly feasible initial guesses.
	Problem $\mathcal{P}_\text{d}$ is also reformulated as an MIQP and solved to optimality with Gurobi \cite{gurobi}, with $\textbf{u}^\ast$ the optimal global control sequence.
	Over all randomized networks and states the suboptimality
	\begin{equation}
		\Tilde{V} = 100 \cdot \frac{V(x, \textbf{u}) - V(x, \textbf{u}^\ast)}{V(x, \textbf{u}^\ast)}
	\end{equation} 
	has median $0$, mean $16.0$, standard deviation $120.7$, maximum $1664.2$, and minimum $0$.
	This demonstrates that, while there exists cases in which arbitrarily bad local minima are found, these are outliers, with Algorithm \ref{alg:switching_ADMM} in general finding solutions with low suboptimality.
	As demonstrated and discussed in Section \ref{sec:ex}, the suboptimality can be reduced using a multi-start approach, and, when Algorithm \ref{alg:stable_dmpc} is deployed, stability is guaranteed.
\end{color}

\section*{Acknowledgment}
The authors would like to thank the authors of \cite{maDistributedMPCBased2023}, A. Ma, D. Li, and Y. Li, for helpfully providing the code for their approach.

\bibliographystyle{IEEEtran}
\bibliography{references.bib}
\vspace*{-1cm}

\begin{IEEEbiography}[{\includegraphics[width=1in,height=1.25in,clip,keepaspectratio]{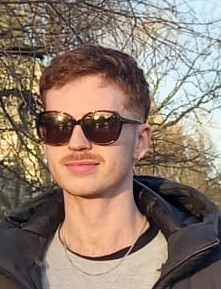}}]{Samuel Mallick} 
	received the B.Sc. and M.Sc. degrees from The University of Melbourne in 2020 and 2022, respectively. He is currently a Ph.D. candidate at the Delft Center for Systems and Control, Delft University of Technology, The Netherlands.
	
	His research interests include model predictive control, reinforcement learning, and distributed control of large-scale and hybrid systems.
\end{IEEEbiography}

\vspace*{-1cm}

\begin{IEEEbiography}[{\includegraphics[width=1in,height=1.25in,clip,keepaspectratio]{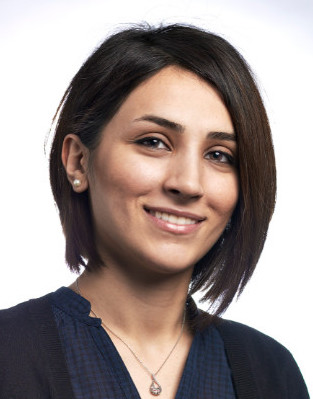}}]{Azita Dabiri} 
	received the Ph.D. degree from the Automatic Control Group, Chalmers University of Technology, in 2016. She was a Post-Doctoral Researcher with the Department of Transport and Planning, Delft University of Technology, from 2017 to 2019. In 2019, she received an ERCIM Fellowship and also a Marie Curie Individual Fellowship, which allowed her to perform research at the Norwegian University of Technology (NTNU), as a Post-Doctoral Researcher, from 2019 to 2020, before joining the Delft Center for Systems and Control, Delft University of Technology, as an Assistant Professor. Her research interests are in the areas of integration of model-based and learning-based control and its applications in transportation networks.
\end{IEEEbiography}
\vspace*{-1cm}

\begin{IEEEbiography}[{\includegraphics[width=1in,height=1.25in,clip,keepaspectratio]{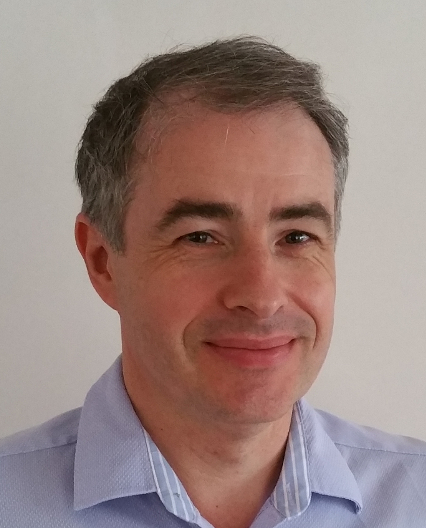}}]{Bart De Schutter}(Fellow, IEEE) 
	received the PhD degree (\emph{summa cum laude}) in applied sciences from KU Leuven, Belgium, in 1996. He is currently a Full Professor and Head of Department at the Delft Center for Systems and Control, Delft
	University of Technology, The Netherlands. His research interests include multi-level
	and multi-agent control, model predictive control, learning-based control, and control
	of hybrid systems, with applications in intelligent transportation systems and smart energy systems. 
	
	Prof.\ De Schutter is a Senior Editor of the IEEE Transactions on Intelligent Transportation Systems.
\end{IEEEbiography}

\vfill

\end{document}